
\documentclass[12pt]{amsart}
\usepackage{amscd,amsmath,amsthm,amssymb,graphics, comment,latexsym }
\usepackage[left]{lineno}
\usepackage{color}
\definecolor{mypink}{rgb}{1.0, 0.01, 0.24}
\definecolor{mygreen}{rgb}{0, 0.7, 0.30}
 %
 %
 %
 \def\NZQ{\mathbb}               
 \def\NN{{\NZQ N}}
 
 \def\ZZ{{\NZQ Z}}
 \def\RR{{\NZQ R}}

 %
 %
 \def\frk{\mathfrak}               

 \def\mm{{\frk m}}

 %

 \def\G{{\mathcal G}}

  \def\Hc{{\mathcal H}}
  \def\Nc{{\mathcal N}}

 %
 \def\ab{{\mathbf a}}
 \def\bb{{\mathbf b}}
 \def\xb{{\mathbf x}}
 
 \def\pb{{\mathbf p}}
\def\qb{{\mathbf q}}
 \def\zb{{\mathbf z}}
 \def\cb{{\mathbf c}}

\def\hb{{\mathbf h}}
 \def\ub{{\mathbf u}}
 \def\vb{{\mathbf v}}

 \def\nb{{\mathbf n}}
\def\hb{{\mathbf h}}
 \def\opn#1#2{\def#1{\operatorname{#2}}} 
 %
 \opn\chara{char} \opn\length{\lambda} \opn\pd{pd} \opn\rk{rk}
 \opn\projdim{proj\,dim} \opn\injdim{inj\,dim} \opn\rank{rank}
 \opn\depth{depth} \opn\grade{grade} \opn\height{height}
 \opn\embdim{emb\,dim} \opn\codim{codim}
 
 \opn\Tr{Tr} \opn\bigrank{big\,rank}
 \opn\superheight{superheight}\opn\lcm{lcm}
 \opn\trdeg{tr\,deg}
 \opn\reg{reg} \opn\lreg{lreg} \opn\ini{in} \opn\lpd{lpd}
 \opn\size{size} \opn\sdepth{sdepth}
 \opn\link{link}\opn\fdepth{fdepth}\opn\lex{lex}
 \opn\tr{tr}
 \opn\type{type}
 \opn\gap{gap}
 \opn\arithdeg{arith-deg}
 \opn\astab{astab}
  \opn\dstab{dstab}
 \opn\defect{def}
 %
 \opn\div{div} \opn\Div{Div} \opn\cl{cl} \opn\Cl{Cl}
 %
 %
 \opn\Spec{Spec} \opn\Supp{Supp} \opn\supp{supp} \opn\Sing{Sing}
 \opn\Ass{Ass} \opn\Min{Min}\opn\Mon{Mon}
 %
 %
 \opn\Ann{Ann} \opn\Rad{Rad} \opn\Soc{Soc}
 %
 %
 \opn\Im{Im} \opn\Ker{Ker} \opn\Coker{Coker} \opn\Am{Am}
 \opn\Hom{Hom} \opn\Tor{Tor} \opn\Ext{Ext} \opn\End{End}
 \opn\Aut{Aut} \opn\id{id}
 
 \opn\nat{nat}
 \opn\pff{pf}
 \opn\Pf{Pf} \opn\GL{GL} \opn\SL{SL} \opn\mod{mod} \opn\ord{ord}
 \opn\Gin{Gin} \opn\Hilb{Hilb}\opn\sort{sort}
 \opn\PF{PF}\opn\Ap{Ap}
 \opn\mult{mult}
 %
 %
 \opn\aff{aff}
 \opn\relint{relint} \opn\st{st}
 \opn\lk{lk} \opn\cn{cn} \opn\core{core} \opn\vol{vol}  \opn\inp{inp} \opn\nilpot{nilpot}
 \opn\link{link} \opn\star{star}\opn\lex{lex}\opn\set{set}
 \opn\width{wd}
 \opn\Fr{F}
 \opn\QF{QF}
 \opn\G{G}
 \opn\type{type}\opn\res{res}
 \opn\conv{conv}
 \opn\gr{gr}
 
 %
 %
 
 \def\pot#1#2{#1[\kern-0.28ex[#2]\kern-0.28ex]}

 %
 %
 \opn\dirlim{\underrightarrow{\lim}}
 \opn\inivlim{\underleftarrow{\lim}}
 %
 %
 %
 \let\union=\cup
 \let\sect=\cap

 %
 %
 \let\to=\rightarrow
 
 \def\Implies{\ifmmode\Longrightarrow \else
         \unskip${}\Longrightarrow{}$\ignorespaces\fi}
 \def\implies{\ifmmode\Rightarrow \else
         \unskip${}\Rightarrow{}$\ignorespaces\fi}
 \def\iff{\ifmmode\Longleftrightarrow \else
         \unskip${}\Longleftrightarrow{}$\ignorespaces\fi}

 \let\:=\colon
 \theoremstyle{plain}
 \newtheorem{Theorem}{Theorem}[section]
 \newtheorem{Lemma}[Theorem]{Lemma}
 \newtheorem{Corollary}[Theorem]{Corollary}
 \newtheorem{Proposition}[Theorem]{Proposition}

 \newtheorem{Definition}[Theorem]{Definition}
  \newtheorem{Example}[Theorem]{Example}
 \newtheorem{Examples}[Theorem]{Examples}

\newtheorem{Remark}[Theorem]{Remark}

\newtheorem{Notation}[Theorem]{Notation}

 %
 %
 \let\epsilon\varepsilon
 \let\kappa=\varkappa
 %
 %
 \textwidth=15cm \textheight=22cm \topmargin=0.5cm
 \oddsidemargin=0.5cm \evensidemargin=0.5cm \pagestyle{plain}
 %
 %
 \def\qed{\ifhmode\textqed\fi
       \ifmmode\ifinner\quad\qedsymbol\else\dispqed\fi\fi}
 \def\textqed{\unskip\nobreak\penalty50
        \hskip2em\hbox{}\nobreak\hfil\qedsymbol
        \parfillskip=0pt \finalhyphendemerits=0}
 \def\dispqed{\rlap{\qquad\qedsymbol}}
 
 %
 \opn\dis{dis}
 \def\pnt{{\raise0.5mm\hbox{\large\bf.}}}
 
 \opn\Lex{Lex}

 


\begin{document}

\title{Ulrich elements in normal simplicial affine  semigroups}

\author {J\"urgen Herzog,  Raheleh Jafari, Dumitru I.\ Stamate}

\address{J\"urgen Herzog, Fachbereich Mathematik, Universit\"at Duisburg-Essen, Campus Essen, 45117
Essen, Germany} \email{juergen.herzog@uni-essen.de}

\address{Raheleh Jafari,  Mosaheb Institute of Mathematics, Kharazmi University, and School of Mathematics, Institute for Research in Fundamental Sciences (IPM), P.O. Box 19395-5746, Tehran, Iran.} \email{rjafari@ipm.ir}

\address{Dumitru I. Stamate, Faculty of Mathematics and Computer Science, University of Bucharest, Str. Academiei 14, Bucharest -- 010014, Romania }
\email{dumitru.stamate@fmi.unibuc.ro}

\subjclass[2010]{Primary 13H10, 05E40; Secondary 13H15, 20M25}

\keywords{almost Gorenstein ring, Ulrich element, affine semigroup ring, lattice points}

\dedicatory{ }

\begin{abstract}
Let $H\subseteq \NN^d$ be a normal 
affine semigroup, $R=K[H]$ its semigroup ring over the field $K$ and $\omega_R$ its canonical module.
The Ulrich elements for $H$ are those $h$ in $H$  such that for the multiplication  map by $\xb^h$ from $R$ into $\omega_R$, the cokernel is an Ulrich module. We say that the ring $R$ is almost Gorenstein if Ulrich elements exist in $H$.
For the class of slim semigroups that we introduce, we provide an algebraic criterion for testing the Ulrich propery.
When $d=2$, all normal affine semigroups are slim. Here we have a simpler combinatorial description of the Ulrich property.
We improve this result for testing the elements in $H$ which are closest to zero. In particular, we give a simple arithmetic criterion for when is $(1,1)$ an Ulrich element in $H$.
\end{abstract}

\thanks{}

\maketitle

\section*{Introduction}

Let $H$ be an affine semigroup in $\NN^d$ and $K[H]$ its semigroup ring over the field $K$.
In this paper we investigate   the almost Gorenstein property for $K[H]$ taking into account the natural
multigraded structure of this ring, under the assumption that $H$ is  normal and simplicial.

The almost Gorenstein property appeared  in the work of Barucci and Fr\"oberg \cite{BF} in the context of 1-dimensional analytical unramified rings. It  was extended to 
$1$-dimensional local rings by Goto, Matsuoka and Thi Phuong in \cite{GMT}, and later on to rings of higher dimension by Goto, Takahashi and Taniguchi in \cite{GTT-2015}.
Let $R$ be a positively graded Cohen-Macaulay $K$-algebra with canonical module $\omega_R$. We let $a=-\min\{k\in \ZZ: (\omega_R)_k \neq 0 \}$, which is also known as the $a$-invariant of $R$. In \cite{GTT-2015}, $R$ is called (graded) almost Gorenstein (AG for short) if there exists an exact sequence of graded $R$-modules
\begin{equation}
\label{ses}
0 \to R \to \omega_R(-a) \to E \to 0,
\end{equation}
where $E$ is an Ulrich module, i.e. $E$ is a Cohen-Macaulay graded module which is minimally generated by $e(E)$ elements. Here $e(E)$ denotes the multiplicity of $E$ with respect to   the graded maximal ideal in $R$.

Let $H \subseteq  \NN^d$ be an affine semigroup  whose associated group is  $\gr(H) =\ZZ^d$.
  We denote $C$ the cone over $H$.
Assume $H$ is normal, i.e. $H=C\sect \ZZ^d$, equivalently, the ring $R$ is normal. Then $R$ is a Cohen-Macaulay ring (\cite{H}) and a $K$-basis for the canonical module $\omega_R$ is given by the monomials with exponents in the relative interior of the cone $C$ (\cite{Danilov}, \cite{Stanley}), i.e.  in the set $\omega_H= \ZZ^d \sect \relint C$. In the multigraded setting that we want to consider here, there does not seem to be any distinguished  element in $\omega_H$ to replace the $a$-invariant in the short exact sequence \eqref{ses}.
In this sense, we propose the following.

\medskip

\noindent {\bf Definition \ref{def:ag}.}
 For $\bb \in\omega_H$ consider the following exact sequence
\begin{eqnarray}
0\to R \to  \omega_R(\bb) \to  E \to 0,
\end{eqnarray}
where $1\in R$ is mapped to $u=\xb^\bb$ and  $E =\omega_R/uR$.
Then  $\bb$ is called  an \textit{Ulrich element} in $H$, if $E$ is an Ulrich $R$-module.

If $H$ admits  an Ulrich element $\bb$, then we call the ring $R=K[H]$ {\em almost Gorenstein with respect to $\bb$}, or  simply  AG if $H$ has an Ulrich element.

\medskip

The Gorenstein property has attracted a lot of interest due to its multifaceted  algebraic and homological descriptions. For rings with a combinatorial structure behind, there are often nice characterizations of the Gorenstein property. Scratching only at the surface, we mention that Gorenstein toric rings were characterized by Hibi in \cite{Hi}, and for special subclasses the results are more precise, see \cite{DeNegri-Hibi, TH87, HLMMV, Dinu}.

The almost Gorenstein property was characterized for determinantal rings \\(Taniguchi, \cite{Tan}), numerical semigroup rings 
(Nari, \cite{Nari-2013}) and  Hibi rings (Miyazaki, \cite{Miyazaki}).

In this paper we investigate the
 Ulrich elements in $H$ under the assumption that the normal affine semigroup $H\subset \NN^d$ is also simplicial, i.e. the cone $C$ over $H$ has $d=\dim_\RR \aff(H)$ extremal rays. 
That will be assumed for the rest of this introduction, too.

Next we outline the main results.
We denote $\ab_1,\dots, \ab_d$ the primitive integer vectors in $H$ situated on each extremal ray of the cone $C$, respectively, and we call them the extremal rays of $H$. They are part of the Hilbert basis of $H$, denoted $B_H$, which is the unique minimal generating set of $H$. 

When $H$ is normal and simplicial it is known that the monomials $\xb^{\ab_1}, \dots, \xb^{\ab_d}$  form a maximal regular sequence on $R$.
A first result that we prove in Proposition~\ref{regular} is that for any $\bb\neq 0$ in $H$ the sequence $\xb^\bb, \xb^{\ab_1}-\xb^{\ab_2},\dots, \xb^{\ab_1}-\xb^{\ab_d}$ is regular, as well. Let $J=(\xb^{\ab_i}-\xb^{\ab_j}: 1\leq i, j \leq d)R$. 
The next technical result is vital in our study of Ulrich elements.  Namely, in Theorem~\ref{tricky}, we show that  $J$ is a reduction ideal of $\mm$ modulo the ideal $\xb^\bb R$, if and only 
if  for any $\cb \in B_H\setminus \omega_H$ the sum of the coordinates of $\cb$ with respect to the basis $\ab_1,\dots, \ab_d$ is at least one.
The normal and simplicial  semigroups with that property are to be called {\em slim}.

In this notation, we  provide the following   characterization. 

\medskip
\noindent{\bf Theorem~\ref{criterion}.}  Let  $H$ be a slim semigroup  and $\bb \in \omega_H$.  Then $\bb$  is an Ulrich element  in $H$ if and only if  $\mm  \omega_R \subseteq (\xb^\bb R, J \omega_R)$.

\medskip
 
This result allows to produce  first examples of semigroups with Ulrich elements, see Examples~\ref{ex:nice},~\ref{gore-regular}. 

In the next sections we focus on making more explicit the AG property in dimension two.
Let $H$ be any normal  affine semigroup $H\subseteq \NN^2$.  It is automatically slim since it is simplicial and  $B_H\setminus\{\ab_1,\ab_2\} \subseteq \omega_H$ (see Lemma~\ref{lemma:omega}).
 We denote $\ab_1, \ab_2$ its extremal rays. In Theorem~\ref{two} we prove that any element  $\bb \in B_H\setminus \{\ab_1, \ab_2\}$ is an Ulrich element in $H$ if and only if for all $\cb_1,\cb_2$ in $ B_H$ one has $$\cb_1+\cb_2 \in (\ab_1+H)\cup (\ab_2+H)\cup (\bb+H).$$ Equivalently, if for all $\cb_1,\cb_2 \in B_H$ so that $\cb_1, \cb_2 \in P_H$ it follows that $\cb_1+\cb_2 \in \bb+H$.  Here $P_H=\{ \lambda_1 \ab_1+\lambda_2 \ab_2: 0\leq \lambda_1, \lambda_2 <1\}$ is the fundamental parallelogram of $H$.

Based on this result, in Section \ref{section:dim2} we find examples  with zero, one, or several Ulrich elements  in $B_H$.
 
We prove in Lemma~\ref{exists} that for any $H\subseteq \NN^2$ as above, the semigroup ideal $\omega_H$ has a unique minimal element with respect to the componentwise partial order on $\NN^2$. We call it the bottom element of $H$.
This definition naturally extends to higher embedding dimension, but when $d>2$ not all normal semigroups in $\NN^d$   have a bottom element.  

However, bottom elements, when available, are good candidates to check against the Ulrich property.
We prove that when $H\subseteq \NN^d$ is a slim semigroup such that
\begin{itemize}
\item (Proposition~\ref{1}) the nonzero elements in $H$ have all the entries positive and $\bb=(1,1,\dots, 1) \in \omega_H$, or
\item (Proposition~\ref{twoinp}) $d=2$ and $\bb$ the bottom element in $H$ satisfies $2\bb \in P_H$,
\end{itemize}
then $\bb$ is the only possible  Ulrich element in $H$.

These results motivate us to find more direct criteria  for testing the Ulrich property of the bottom element. Our attempt is successful  when $d=2$. 

\medskip

In the following, $H$ is a normal affine semigroup in $\NN^2$ with the extremal rays $\ab_1=(x_1, y_1)$ and $\ab_2=(x_2, y_2)$ with $\ab_1$ closer to the $x$-axis than $\ab_2$. Considering $\bb=(u,v)$ the bottom element  of $H$, for $i=1,2$ we define $H_i$ to be the normal semigroup with the extremal rays $\bb$ and $\ab_i$. We denote $H_i^*=\relint P_{H_i} \cap \ZZ^2$ for $i=1,2$. We show

\medskip

\noindent{\bf {Lemma~\ref{AG2}.}} For $\bb$ the bottom element in $H$ the following are equivalent:
\begin{enumerate}
\item [(a)] $\bb$ is an Ulrich element in $H$;
\item [(b)] for $i=1,2$, if $\pb,\qb \in H_i^*$ then $\pb+\qb \notin H_i^*$.
\end{enumerate}

 We shall say that $H$ is AG1 if point (b) above is satisfied for $i=1$ and we call it AG2 if it holds for $i=2$.

Thus the bottom element is an Ulrich element in $H$ if and only if $H$ is AG1 and AG2. This calls for a better understanding of the points in $H_1^*$ and $H_2^*$. Lemma~\ref{number} shows that $H_i^*$ has  $|vx_i-uy_i|-1$ elements, for $i=1,2$. An immediate consequence of independent interest is the following Gorenstein criterion.

\medskip

\noindent {\bf Corollary~\ref{gor}.}  With notation as above, the ring $K[H]$ is Gorenstein if and only if $vx_1-uy_1=uy_2-vx_2=1$.

The $x$-coordinates of points in $H_1^*$  are distinct integers in the interval $(u, x_1)$. Moreover, if for any integer $i$ we consider the integers $q_i, r_i$ so that $iy_1=q_i x_1+r_i$ with $0\leq r_i <x_1$ then any integer $k\in (u, x_1)$ is the $x$-coordinate of some $\pb \in H_1^*$ (i.e. $k\in \pi_1(H_1^*)$) if and only if $q_k=v-1+q_{k-u}$, or equivalently, if $r_k \geq x_1 -(vx_1-uy_1)$. In that case, $\pb=(k, q_k+1)$. These observations  (detailed in  Lemma~\ref{k-point}) allow us to test the AG1 property as follows.

\medskip

\noindent {\bf Proposition~\ref{AG1-rk}.} The semigroup $H$ is AG1 if and only if $r_k+r_\ell < 2x_1-(vx_1-uy_1)$ for all integers $k,\ell \in \pi_1(H_1^*)$ with $k+\ell < x_1$.

When the bottom element is $(1,1)$ (i.e. $y_1<x_1$ and $x_2<y_2$) we can describe recursively the points in $H_1^*$.

\medskip

\noindent {\bf Lemma~\ref{elements-H1}} Assume $(1,1)\in \omega_H$ and $H_1^* \neq \emptyset$. Let $n=|H_1^*|=x_1-y_1-1$.  
Recursively, we define non-negative integers $\ell_1,\ldots, \ell_n$ and $s_1,\ldots,s_n$ by
\[
x_1=\ell_1(x_1-y_1)+s_1,   \quad \text{with} \quad s_1<x_1-y_1,
\]
and
\[
y_1+s_{i-1}=\ell_{i}(x_1-y_1)+s_{i}\quad \text{with} \quad  s_i<x_1-y_1,
\]
for $i=2,\ldots,n$. Then
	\[
	H_1^*=\left\{\pb_t=(c_t,d_t) : c_t=t+\sum^t_{i=1}\ell_i \ , \ d_t=\sum^t_{i=1}\ell_i \ , \ t=1,\dots,n\right\}.
	\]
	 
A similar description is available for points in $H_2^*$. A little bit more effort is necessary to obtain the following arithmetic  criterion for the Ulrich property of $(1,1)$. The effort is compensated with the simplicity of the statement. 

\medskip

\noindent {\bf{Theorem~\ref{ulrich11}.}} Assume $(1,1)\in \omega_H$. Then $(1,1)$ is an Ulrich element in $H$ if and only if $x_i \equiv 1 \mod (x_i-y_i)$ for $i=1,2$.

Consequently, by Corollary~\ref{ag11}, if $x_1y_1x_2y_2 \neq 0$ the ring $K[H]$ is AG if and only if $x_i \equiv 1 \mod (x_i-y_i)$ for $i=1,2$.

In Section~\ref{sec:ng} we discuss another extension of the Gorenstein property for affine semigroup rings.
According  to the definition proposed in \cite{HHS},  any Cohen-Macaulay ring $K[H]$ is called nearly Gorenstein if
the trace ideal $\tr(\omega_{K[H]})$ contains the graded maximal ideal of $K[H]$. 
For one dimensional rings, the almost Gorenstein property implies the nearly Gorenstein property, but for rings of  larger dimension there
 is no implication between these two properties.
We prove in Theorem~\ref{nearly} that when $H$ is a normal semigroup in $\NN^2$ the ring $K[H]$ is nearly Gorenstein. Example~\ref{example2} shows that the statement is not valid in higher embedding dimensions.

\section{Background on affine semigroups and their toric rings}
\label{background}

  In this paper all semigroups considered are fully embedded, i.e. when writing $H\subseteq \NN^d$ we shall implicitly assume that  the group generated by $H$ is $\gr (H)=\ZZ^d$.
A subset $H\subseteq \NN^d$ is called an affine semigroup if there exist $\cb_1,\dots, \cb_r \in H$ such that $H=\sum_{i=1}^r \NN \cb_i$. Moreover, $H$ is called a normal semigroup if for all $\hb$ in $\NN^d$ and $n$ positive integer,  $n \hb \in H$  implies that $\hb \in H$.

Let $K$ be any field and $H=\sum_{i=1}^r \NN \cb_i \subseteq \NN^d$. The semigroup ring $K[H]$ is the subalgebra of the polynomial ring $K[x_1,\dots, x_d]$ generated by the monomials with exponents in $H$. Then $H$ is normal if and only if the semigroup ring $K[H]$ is integrally closed  in its field of fractions (\cite{BG}).
 The normality for $H$ is also equivalent to the fact  that $H$ contains all the lattice points of the rational polyhedral cone $C$ that it generates, i.e. $H=C\sect \ZZ^d$, where
$$
C=\left \{ \sum_{i=1}^r \lambda_i \cb_i: \lambda_i \in \RR_{\geq 0}, \text{ for } i=1,\dots, r \right\}.
$$

The dimension (or rank) of $H$ is defined as the dimension of $\aff(H)$, the affine subspace it generates. The latter is the same as $\aff(C)$.
 
Let $\langle \cdot, \cdot \rangle$ denote the usual scalar product in $\RR^d$.
Given $\nb \in \RR^d\setminus \{0\}$, the hyperplane $H_\nb=\{\zb \in \RR^d: \langle \zb, \nb \rangle =0\}$ is called a  \textit{support hyperplane} for $C$ if $\langle \zb, \nb \rangle \geq 0$ for all $\zb \in C$  and    $H_\nb \sect C \neq \emptyset$.
In this case, the cone $H_\nb \sect C$ is called a  \textit{face} of $C$ and its dimension is $\dim \aff(H_\nb \sect C)$. Let $F$ be any face of the cone $C$. When $\dim F=1$, the face $F$ is called an  \textit{extremal ray}, and when $\dim F=d-1$, it is called a  \textit{facet} of $C$.  The normal vector to any hyperplane is determined up to multiplication by a nonzero factor, hence we may choose  $\nb_1, \dots, \nb_s \in \ZZ^d$ to be  the normals to the support hyperplanes that determine the facets of  $C$ and such that
$$
C= \{ \zb \in \RR^d: \langle \zb, \nb_i \rangle \geq 0, \text{ for } i=1, \dots, s\}.
$$

The unique minimal set of generators for the semigroup $H$ is called the  \textit{Hilbert basis} of $H$ and we denote it as $B_H$.
 
It is known that the cone $C$ has at least $d$ facets and at least $d$ extremal rays. When $C$ has $d$ facets (equivalently, that it has $d$ extremal rays) the cone $C$ and the semigroup $H$ are called  \textit{simplicial}.

For any $d\geq 2$ we denote by $\Nc_d$ the class of normal simplicial affine semigroups which are fully embedded in $\NN^d$.

Let $H\in\Nc_d$ and $C$ the cone over $H$. On each extremal ray of $C$ there exists a unique primitive element from $H$, which we call an {\em  extremal ray} for $H$. 
Denote $\ab_1,\dots, \ab_d$ the extremal rays for $H$. These form an $\RR$-basis in $\RR^d$. 
 For $\zb \in \RR^d$ such that $\zb=\sum_{i=1}^d \lambda_i \ab_i$ with $\lambda_i \in \RR, i=1,\dots,d$, we set $[\zb]_i=\lambda_i$ for $i=1,\dots, d$.  In this notation, $\zb$ is in the cone $C$  if and only if $[\zb]_i \geq 0 $ for $i=1, \dots, d$. Also, when $\zb \in \ZZ^d$ one has that $\zb \in H$ if and only if $[\zb]_i \geq 0$ for $i=1,\dots, d$.

The fundamental (semi-open) \textit{parallelotope} of $H$ is the set
$$
P_H=\left\{ \zb \in\RR^d: \zb=\sum_{i=1}^d \lambda_i \ab_i \text { with } 0\leq \lambda_i <1 \text{ for } i=1, \dots, d\right\}.
$$
Its closure in $\RR^d$ is the set $ \overline{P_H}=\left\{ \zb \in\RR^d:    0\leq [\zb]_i \leq 1 \text{ for } i=1, \dots, d\right\}$.
It is well known, and easy to see, that any $\hb$ in $H$ decomposes uniquely as $\hb=\sum_{i=1}^d n_i \ab_i+ \hb'$ with $\hb' \in P_H \cap \ZZ^d$ and $n_1,\dots, n_d$  nonnegative integers. 
The extremal rays of $H$ are in $B_H \setminus P_H$, but the rest of the elements in $B_H$ belong to $P_H$.

 \medskip

Since $H$ is simplicial,   $\xb^{\ab_1} ,\dots, \xb^{\ab_d}$  is a system of parameters in $R$, see \cite[(1.11)]{GSW}.
As $H$ is a normal semigroup, by Hochster \cite{H}, the  ring $R=K[H]$ is Cohen-Macaulay of dimension $d$, hence  any system of parameters in $R$   is   a regular sequence  of maximal length.
 By Danilov \cite{Danilov} and Stanley \cite{Stanley}, the canonical  module $\omega_R$ of $R$ is the ideal in $R$ generated by the monomials $\xb^\vb$ whose exponent vector $\vb=\log(\xb^\vb)$  belongs to the relative interior of $C$,  denoted by $\relint C$. Note that
\[
\relint C=\left\{\cb\in \RR^d\: \cb=\sum_{i=1}^d \lambda_i \ab_i \text{ with  $\lambda_i \in \RR_{>0}$ for all $i=1, \dots, d$}\right\}.
\]
We set
$$
\omega_H=\{\hb\in H: \xb^\hb \in \omega_R\}=\ZZ^d\cap\relint C,
$$
which is a semigroup ideal of $H$, i.e.  $\omega_H +H \subseteq \omega_H$.
We note that $\hb \in \ZZ^d$ is in $\omega_H$ if and only if $[\hb]_i >0$ for $i=1,\dots, d$.

 The ideal $\omega_R$ has a unique minimal system of monomial generators which we denote by $G(\omega_R)$. We set $G(\omega_H)=\{ \log(u): u\in G(\omega_R) \}$. Clearly, $G(\omega_H)$ is the unique  minimal system  	of  generators for $\omega_H$.
The situation when $G(\omega_H)$ is a singleton corresponds to the situation when $R$ is a Gorenstein ring.
When $B_H=\{\ab_1,\dots, \ab_d \}$ then $R$ is a regular ring, there is no lattice point in the relative interior of $\overline{P_H}$, and $G(\omega_H)= \{ \sum_{i=1}^d \ab_i\}$.

 The following easy  lemma  describes the minimal generators of $\omega_H$ when $H\in \Nc_2$. 
For completeness,   we  include a proof here.

 \begin{Lemma}
	\label{lemma:omega} 
           Let $H$ in $\Nc_2$ with the extremal rays $\ab_1,\ab_2$.
 Then  $B_H\cap \omega_H = B_H\setminus \{ \ab_1,\ab_2\}$. Moreover, if $\{\ab_1, \ab_2\} \subsetneq B_H$,  then $G(\omega_H)=B_H\cap \omega_H$.
 \end{Lemma}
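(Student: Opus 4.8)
The plan is to handle the two assertions in turn, working throughout with the coordinate functionals $[\cdot]_1,[\cdot]_2$ attached to the basis $\ab_1,\ab_2$, and recalling from the background that an integer vector $\hb$ lies in $H$ exactly when $[\hb]_1,[\hb]_2\geq 0$, lies in $\omega_H$ exactly when $[\hb]_1,[\hb]_2>0$, and that $[\ab_1]_1=[\ab_2]_2=1$ while $[\ab_1]_2=[\ab_2]_1=0$. For the first equality $B_H\cap\omega_H=B_H\setminus\{\ab_1,\ab_2\}$, the inclusion $\subseteq$ is immediate, since $\ab_1$ and $\ab_2$ each have a vanishing coordinate and hence lie outside $\omega_H$. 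For $\supseteq$, I would take $\hb\in B_H\setminus\{\ab_1,\ab_2\}$ and argue that neither coordinate can vanish: if $[\hb]_1=0$, then $\hb$ sits on the ray through $\ab_2$, so by primitivity of $\ab_2$ one has $\hb=t\ab_2$ with $t\in\ZZ_{\geq 0}$; since $\hb\neq 0$ and $\hb\neq\ab_2$ this forces $t\geq 2$, and then $\hb=\ab_2+(t-1)\ab_2$ contradicts $\hb\in B_H$. Thus $[\hb]_1>0$, symmetrically $[\hb]_2>0$, and $\hb\in\omega_H$.

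For the second assertion I assume $\{\ab_1,\ab_2\}\subsetneq B_H$ and prove $G(\omega_H)=B_H\cap\omega_H$ by two inclusions, using that $\hb\notin G(\omega_H)$ precisely when $\hb=\hb'+\cb$ for some $\hb'\in\omega_H$ and $\cb\in H\setminus\{0\}$. The inclusion $B_H\cap\omega_H\subseteq G(\omega_H)$ is the easy half: were such an $\hb$ non-minimal, the identity $\hb=\hb'+\cb$ with $\hb'\in\omega_H\subseteq H\setminus\{0\}$ would display $\hb$ as a sum of two nonzero elements of $H$, contradicting $\hb\in B_H$. For the reverse inclusion I take $\hb\in G(\omega_H)$, assume $\hb\notin B_H$, and seek a contradiction by writing $\hb=\hb'+\cb$ as above. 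Since $\hb$ is not a minimal generator of $H$, we may write $\hb=\hb_1+\hb_2$ with $\hb_1,\hb_2\in H\setminus\{0\}$. If one summand, say $\hb_1$, already lies in $\omega_H$, then $\hb=\hb_1+\hb_2$ is the desired decomposition. Otherwise each $\hb_j$ has a vanishing coordinate; these coordinates must differ, for otherwise the corresponding coordinate of $\hb$ would vanish, contrary to $\hb\in\omega_H$. By primitivity, each $\hb_j$ is then a positive integer multiple of one of the extremal rays, one a multiple of $\ab_1$ and the other of $\ab_2$, so $\hb=k_1\ab_1+k_2\ab_2$ with $k_1,k_2\geq 1$.

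It is exactly in this last situation that the hypothesis $\{\ab_1,\ab_2\}\subsetneq B_H$ is needed, and I expect it to be the crux of the argument: here neither natural summand of $\hb$ is interior, so I would invoke the hypothesis to choose an interior generator $\db\in B_H\cap\omega_H$, which by the first part satisfies $0<[\db]_i<1$ for $i=1,2$. Then $[\hb-\db]_i=k_i-[\db]_i>0$ for both $i$, so $\hb-\db\in\omega_H\subseteq H\setminus\{0\}$, and $\hb=\db+(\hb-\db)$ contradicts $\hb\in G(\omega_H)$. This completes both inclusions and hence the lemma; apart from this one case, the proof is routine coordinate bookkeeping together with the primitivity of the extremal rays.
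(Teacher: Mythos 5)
Your proof is correct and follows essentially the same route as the paper's: primitivity of the extremal rays handles the first equality, and the hypothesis $\{\ab_1,\ab_2\}\subsetneq B_H$ is used exactly where the paper uses it, to supply an interior generator $\db$ that decomposes any $k_1\ab_1+k_2\ab_2$ (the paper phrases this via $G(\omega_H)\subseteq\relint P_H$, you via the explicit splitting $\hb=\db+(\hb-\db)$, but the idea is identical). One pedantic note: the bound $[\db]_i<1$ comes from $\db$ being a minimal generator of $H$ (so $\db\in P_H$), not from the first part of the lemma, which only gives $[\db]_i>0$.
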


\begin{proof}
 Clearly, $B_H\sect \omega_H \subseteq B_H\setminus \{\ab_1, \ab_2\}$. If $\bb \in B_H\setminus \{\ab_1, \ab_2\}$ then, since $\ab_1$ and $\ab_2$ are the extremal rays, it follows that $\bb \in \relint P_H$, hence $\bb \in B_H\sect \omega_H$. Therefore, $B_H\cap \omega_H = B_H\setminus \{ \ab_1,\ab_2\}$.

Assume $\{\ab_1, \ab_2\} \subsetneq B_H$. Let $\bb \in G(\omega_H)$. The only lattice points on the boundary of the parallelogram $\overline{P_H}$ are $0, \ab_1, \ab_2, \ab_1+\ab_2$. None of them  is in $G(\omega_H)$, under our hypothesis. Thus $\bb \in \relint P_H$. If, on the contrary, $\bb \notin B_H$, then $\bb$ is the sum of at least two elements in $B_H$, out of which at least one is not in $\omega_H$, i.e. the latter is $\ab_1$ or $\ab_2$. This implies that $\bb \notin P_H$, a contradiction. Consequently, $G(\omega_H)\subseteq B_H \sect \omega_H$. The reverse inclusion is clear. 
\end{proof}

We refer to the  monographs \cite{BH}, \cite{BG}, \cite{Villarreal}, \cite{Ziegler}, \cite{Fulton}  for more background on affine semigroups, their semigroup rings, rational cones and the connections with algebraic geometry.

\section{A regular sequence in $K[H]$  and slim  semigroups}
\label{section:regular}
 
Throughout this section $H$ is a semigroup in $\Nc_d$ having the extremal rays $\ab_1,\dots, \ab_d$, and $R=K[H]$. 
The main result is  Theorem~\ref{tricky}  where we present    equivalent conditions for the existence of a convenient reduction  for the graded maximal ideal of $K[H]/(\xb^\bb)$, where $\bb$ is any nonzero element in $H$.
 This result motivates us to introduce the class of slim semigroups. 

The following lemma plays a crucial role in the proof of Proposition~\ref{regular} and in several other arguments in this paper.

\begin{Lemma}
	\label{b-in-omega}
	Let $\bb=\sum_{i=1}^d \lambda_i \ab_i$, with   $\lambda_1, \dots, \lambda_d \in \RR$.
	\begin{enumerate}
		\item [(a)] Let $n_i=\lfloor \lambda_i \rfloor +1$ for $i=1,\dots, d$. Then   $(\sum_{i=1}^d n_i  \ab_i) -\bb \in \omega_H$.
		\item [(b)]
		If $\bb \in \overline{P_H}$ then $ (\sum_{i=1}^d \ab_i) -\bb \in H$, and in particular, if $\bb \in P_H$,  then $(\sum_{i=1}^d \ab_i) -\bb \in \omega_H$.
	\end{enumerate}
\end{Lemma}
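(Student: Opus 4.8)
The plan is to reduce both statements to the coordinate description of membership recorded in Section~\ref{background}: for a lattice vector $\zb\in\ZZ^d$ one has $\zb\in H$ precisely when $[\zb]_i\geq 0$ for all $i$, and $\zb\in\omega_H$ precisely when $[\zb]_i>0$ for all $i$. Since the conclusions place the relevant vectors in $\omega_H\subseteq\ZZ^d$ or in $H\subseteq\ZZ^d$, it is understood that $\bb\in\ZZ^d$; granting this, both $\sum_{i=1}^d n_i\ab_i$ and $\sum_{i=1}^d\ab_i$ are integer combinations of the integer vectors $\ab_1,\dots,\ab_d$, so the vectors $\bigl(\sum_{i=1}^d n_i\ab_i\bigr)-\bb$ and $\bigl(\sum_{i=1}^d\ab_i\bigr)-\bb$ automatically lie in $\ZZ^d$. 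Everything then comes down to computing their coordinates $[\cdot]_i$.

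For part (a), I would simply read off the coordinates of $\wb=\bigl(\sum_{i=1}^d n_i\ab_i\bigr)-\bb$. By linearity of the coordinate functionals $[\cdot]_i$, together with $[\ab_j]_i=\delta_{ij}$ and $[\bb]_i=\lambda_i$, we get $[\wb]_i=n_i-\lambda_i=\lfloor\lambda_i\rfloor+1-\lambda_i$. The defining inequality $\lfloor\lambda_i\rfloor\leq\lambda_i<\lfloor\lambda_i\rfloor+1$ then yields $0<[\wb]_i\leq 1$ for every $i$; in particular all coordinates are strictly positive, so $\wb\in\omega_H$.

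For part (b), assume first $\bb\in\overline{P_H}$, i.e.\ $0\leq\lambda_i\leq 1$ for all $i$. Then $\bigl(\sum_{i=1}^d\ab_i\bigr)-\bb$ has $i$-th coordinate $1-\lambda_i\geq 0$, whence it lies in $H$. For the final clause, note that $\bb\in P_H$ forces $\lambda_i\in[0,1)$, so $\lfloor\lambda_i\rfloor=0$ and $n_i=1$; the assertion $\bigl(\sum_{i=1}^d\ab_i\bigr)-\bb\in\omega_H$ is then exactly the instance of part (a) for these $n_i$. (Equivalently, the coordinates $1-\lambda_i$ now lie in $(0,1]$, hence are strictly positive.)

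The argument is elementary and I anticipate no serious obstacle; the one point that requires care is tracking the strict versus non-strict floor inequalities, since this is exactly what distinguishes the conclusion $\wb\in\omega_H$ in (a), and in the $P_H$ case of (b), from the weaker conclusion in the closed case $\bb\in\overline{P_H}$. The whole force of the lemma resides in the choice $n_i=\lfloor\lambda_i\rfloor+1$, namely the smallest integer strictly larger than $\lambda_i$, which is precisely what makes each coordinate $n_i-\lambda_i$ land in the interval $(0,1]$.
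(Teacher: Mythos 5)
Your proof is correct and is essentially the paper's own argument: the paper writes $(\sum_{i=1}^d n_i\ab_i)-\bb=\sum_{i=1}^d(1-\{\lambda_i\})\ab_i$ with $0<1-\{\lambda_i\}\leq 1$, which is exactly your coordinate computation $[\wb]_i=n_i-\lambda_i\in(0,1]$, and part (b) is likewise read off from the coordinates. Your explicit remark that $\bb$ must be a lattice point for the conclusion to make sense is a reasonable clarification of something the paper leaves implicit, but does not change the argument.
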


\begin{proof}
	For (a) we note that $(\sum_{i=1}^d n_i  \ab_i) -\bb= \sum_{i=1}^d (1-\{ \lambda_i \}) \ab_i$ and $0<1-\{\lambda_i \}\leq 1$ for all $i$, hence the sum of interest is in $\omega_H$. Here we denoted $\{ \lambda_i \}=\lambda_i- \lfloor \lambda_i \rfloor$ for all $i$. Part (b) follows immediately.
\end{proof}

\begin{Proposition}
	\label{regular}
For any   $\bb\neq 0$ in $H$, the sequence  $\xb^\bb,\xb^{\ab_1}-\xb^{\ab_2},  \ldots, \xb^{\ab_1}-\xb^{\ab_d}$ is a regular sequence on $R$.
\end{Proposition}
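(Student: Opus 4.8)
The plan is to reduce the assertion to showing that the ideal generated by the $d$ listed elements is $\mm$-primary, and then invoke the Cohen--Macaulay property. Since $H$ is normal and simplicial, $R=K[H]$ is Cohen--Macaulay of dimension $d$, and the listed sequence has exactly $d$ members ($\xb^\bb$ together with the $d-1$ binomials $\xb^{\ab_1}-\xb^{\ab_i}$). In a graded Cohen--Macaulay $K$-algebra any homogeneous system of parameters is a regular sequence (in any order), so once we know these $d$ elements are homogeneous of positive degree and generate an $\mm$-primary ideal, we are done. To make the binomials homogeneous I first equip $R$ with the positive $\NN$-grading $\deg\xb^\hb=N\sum_{i=1}^d[\hb]_i$, where $N$ is a positive integer clearing denominators so that this is integer valued on $\ZZ^d$. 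Every $\ab_i$ has the same degree $N$, so each $\xb^{\ab_1}-\xb^{\ab_i}$ is homogeneous; $\xb^\bb$ is a monomial, hence homogeneous; $R_0=K$ and the grading is positive. Thus $R$ is a positively graded Cohen--Macaulay algebra to which the system-of-parameters criterion applies.

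Set $J_0=(\xb^{\ab_1}-\xb^{\ab_2},\dots,\xb^{\ab_1}-\xb^{\ab_d})$ and $I=(\xb^\bb)+J_0$, the ideal in question. Modulo $J_0$ all the monomials $\xb^{\ab_1},\dots,\xb^{\ab_d}$ become equal, so $\xb^{\ab_i}\equiv\xb^{\ab_1}\pmod{J_0}$ for every $i$. It therefore suffices to prove the single containment $(\xb^{\ab_1})^m\in I$ for some $m\ge 1$: then $(\xb^{\ab_i})^m\in I$ for all $i$ as well, by expanding $(\xb^{\ab_i})^m=((\xb^{\ab_i}-\xb^{\ab_1})+\xb^{\ab_1})^m$ and using $\xb^{\ab_i}-\xb^{\ab_1}\in J_0\subseteq I$, so that each $\xb^{\ab_i}\in\sqrt I$. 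Since $\xb^{\ab_1},\dots,\xb^{\ab_d}$ is a system of parameters, $(\xb^{\ab_1},\dots,\xb^{\ab_d})$ is already $\mm$-primary, whence $\mm=\sqrt{(\xb^{\ab_1},\dots,\xb^{\ab_d})}\subseteq\sqrt I\subseteq\mm$ and $I$ is $\mm$-primary.

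The heart of the argument, and the step where Lemma~\ref{b-in-omega} enters, is producing this power of $\xb^{\ab_1}$ in $I$. Writing $\bb=\sum_{i=1}^d\lambda_i\ab_i$ with $\lambda_i=[\bb]_i\ge 0$, I set $n_i=\lfloor\lambda_i\rfloor+1$ and $\cb=\bigl(\sum_{i=1}^d n_i\ab_i\bigr)-\bb$. Lemma~\ref{b-in-omega}(a) gives $\cb\in\omega_H\subseteq H$, so $\xb^\cb\in R$ and $\xb^{\sum_i n_i\ab_i}=\xb^\bb\,\xb^\cb\in(\xb^\bb)\subseteq I$. On the other hand, reducing modulo $J_0$ and putting $m=\sum_{i=1}^d n_i\ (\ge d\ge 1)$, we get $\xb^{\sum_i n_i\ab_i}=\prod_i(\xb^{\ab_i})^{n_i}\equiv(\xb^{\ab_1})^m\pmod{J_0}$, that is, $(\xb^{\ab_1})^m-\xb^{\sum_i n_i\ab_i}\in J_0\subseteq I$. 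Adding the two memberships yields $(\xb^{\ab_1})^m\in I$, completing the reduction. The one genuine obstacle is precisely the reason for distributing the exponent over all extremal rays rather than using $\xb^{\ab_1}$ alone: the naive element $m\ab_1-\bb$ need not lie in $H$, whereas spreading it as $\sum_i n_i\ab_i-\bb$ lands inside the interior $\omega_H$ by Lemma~\ref{b-in-omega}(a), which is exactly what makes $\xb^{\sum_i n_i\ab_i}$ divisible by $\xb^\bb$ inside $R$.
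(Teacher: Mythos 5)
Your proof is correct and follows essentially the same route as the paper's: both reduce to showing the ideal $I=(\xb^\bb)+J_0$ has finite colength by producing $(\xb^{\ab_i})^{N}\in I$ for $N=\sum_i n_i$ with $n_i=\lfloor[\bb]_i\rfloor+1$, using Lemma~\ref{b-in-omega}(a) to see that $\prod_i(\xb^{\ab_i})^{n_i}\in\xb^\bb R$, and then invoking the Cohen--Macaulay property to pass from system of parameters to regular sequence. Your phrasing via congruences modulo $J_0$ replaces the paper's explicit telescoping identity, and your auxiliary grading making the binomials homogeneous is a nice (if inessential) tidying of the final step.
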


\begin{proof}
	In order to simplify notation we set $u=\xb^\bb$ and $v_i=\xb^{\ab_i}$ for $i=1,\ldots,d$. Let $I=(u,v_1-v_2,\ldots,v_1-v_d)$.
	We may write $\bb=\sum_{i=1}^d \lambda_i \ab_i$ with $\lambda_i \geq 0$ for $i=1, \dots, d$.  We denote $n_i=\lfloor \lambda_i \rfloor +1$ for all $i$ and we set $N=\sum_{i=1}^d n_i$.

	We will show that $v_i^N \in I$ for $i=1,\ldots,d$.
	Since $v_i-v_j\in I$ for all $i$ and $j$, it follows by symmetry that  it is enough to show that $v_1^N\in I$.
	
	We write
	\begin{eqnarray*}
		v_1^N &=& (v_1^{n_2}-v_2^{n_2}) \cdot v_1^{N-n_2}+ v_1^{N-n_2}v_2^{n_2} \\
		&=&  (v_1^{n_2}-v_2^{n_2}) \cdot v_1^{N-n_2}+ v_1^{N-n_2-n_3}v_2^{n_2}(v_1^{n_3}-v_3^{n_3})+ v_1^{N-n_2-n_3}v_2^{n_2}v_3^{n_3} \\
		&=& \sum_{i=2}^d v_1^{N-\sum_{j=2}^i n_j} v_2^{n_2}\cdots v_{i-1}^{n_{i-1}} (v_1^{n_i}-v_i^{n_i}) + v_1^{n_1}\cdots v_d^{n_d},
	\end{eqnarray*}
	hence by Lemma \ref{b-in-omega} it follows that $v_1^N \in I$.
	
	Since $H$ is a  normal simplicial semigroup, $v_1, \dots, v_d$ is a regular  sequence  in $R$, hence $v_1^N, \dots, v_d^N$ is a regular sequence in $R$, as well. Since $R$ is  a Cohen-Macaulay ring of dimension $d$ we get that  $v_1^N, \dots, v_d^N$ is also  a system of parameters for $R$. Thus
	$0 < \lambda (R/I) \leq \lambda(R/(v_1^{N}, \dots,v_d^N)) <\infty$, which implies that $u, v_1-v_2, \dots, v_1-v_d $ is a system of parameters, and consequently a regular sequence for $R$.
	Here $\lambda(M)$  denotes the length of an $R$-module $M$.
\end{proof}

In the sequel, our aim is to find a reduction ideal for the  graded maximal ideal $\mm$ of $R$,  modulo the ideal $\xb^\bb R$, for any $\bb\in H\setminus\{0\}$. In this order, we need the following lemma  which is interesting on its own.

\begin{Lemma}
	\label{lemma:k-for-u}
  For any $\bb$ in $H$, there exists  a positive integer $k$ such  that for all $\cb_1, \dots, \cb_k$ in $\omega_H$,  one has   $\cb_1+\dots+ \cb_k \in \bb+ H$.
\end{Lemma}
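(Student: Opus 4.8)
The plan is to reduce everything to the coordinate description of the cone recalled in the background section. For $\zb \in \ZZ^d$ one has $\zb \in H$ exactly when $[\zb]_i \geq 0$ for all $i$; consequently $\zb \in \bb + H$ if and only if $[\zb]_i \geq [\bb]_i$ for $i = 1, \dots, d$. Since the $i$-th coordinate is additive, writing $\cb_1 + \dots + \cb_k$ in the basis $\ab_1, \dots, \ab_d$ gives $[\cb_1 + \dots + \cb_k]_i = \sum_{j=1}^k [\cb_j]_i$, so the whole statement follows once I control these coordinates from below uniformly over $\omega_H$.

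The key observation is that there is a uniform constant $\delta > 0$ with $[\cb]_i \geq \delta$ for every $\cb \in \omega_H$ and every $i$. Each $[\cb]_i$ is strictly positive because $\cb$ lies in $\relint C$, but more is true. Let $A$ be the integer matrix with columns $\ab_1, \dots, \ab_d$ and put $D = |\det A| \neq 0$. Since $\cb$ and the $\ab_j$ have integer entries, Cramer's rule expresses each $[\cb]_i$ as an integer divided by $D$; a positive rational of the form $(\text{integer})/D$ is at least $1/D$, so $\delta := 1/D$ works. (Alternatively, finiteness of $G(\omega_H)$ yields such a bound: every $\cb \in \omega_H$ can be written as $\cb = g + \hb$ with $g \in G(\omega_H)$ and $\hb \in H$, whence $[\cb]_i \geq [g]_i \geq \min_{g,i} [g]_i > 0$.)

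With $\delta$ in hand, set $M = \max_{1 \leq i \leq d} [\bb]_i$ and choose $k = \max\{1, \lceil M/\delta \rceil\}$. Then for any $\cb_1, \dots, \cb_k \in \omega_H$ and each $i$,
\[
[\cb_1 + \dots + \cb_k]_i = \sum_{j=1}^k [\cb_j]_i \geq k\delta \geq M \geq [\bb]_i,
\]
so $\cb_1 + \dots + \cb_k \in \bb + H$ by the coordinate criterion, which is exactly the claim.

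There is no serious obstacle here; the one point demanding care is that the coordinates $[\cb]_i$ need not be integers, so the naive implication ``each coordinate is positive, hence at least $1$'' is false. The substance of the argument is precisely the uniform positive lower bound $\delta$, for which the bounded-denominator (determinant) estimate is the cleanest route.
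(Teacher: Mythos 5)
Your proof is correct. It rests on the same underlying mechanism as the paper's argument -- bound a family of linear functionals from below uniformly on $\omega_H$ and then take $k$ large enough that the sum dominates the corresponding values at $\bb$ -- but you realize it through the \emph{inner} (generator) description of the cone rather than the \emph{outer} (facet) one. The paper evaluates elements of $\omega_H$ against the primitive integer facet normals $\nb_j$; there the pairing $\langle \cb, \nb_j\rangle$ is a positive \emph{integer} for every $\cb\in\omega_H$, so the uniform lower bound is simply $1$ and no further work is needed, and the argument applies verbatim to non-simplicial normal semigroups. You instead use the coordinates $[\cdot]_i$ in the basis $\ab_1,\dots,\ab_d$ of extremal rays, which requires simpliciality (harmless here, since the section assumes $H\in\Nc_d$) and forces you to confront the fact that these coordinates are only rationals; your bounded-denominator bound $[\cb]_i\ge 1/|\det A|$ via Cramer's rule correctly supplies the uniform $\delta>0$, and your closing remark rightly identifies this as the one genuine subtlety. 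In effect, the two proofs use proportional functionals with different normalizations: the paper's choice makes integrality do the work that your determinant estimate does. Both are complete; the paper's is marginally more general, yours makes the dependence of $k$ on $\bb$ slightly more explicit ($k=\lceil M\,|\det A|\rceil$).
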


\begin{proof}
	Assume  $\nb_1, \dots, \nb_r \in \ZZ^d$ are   normal vectors to the support hyperplanes of the  facets of the cone $C$ such that
	$$
	C=\{\xb\in \RR^d: \langle \xb, \nb_i \rangle \geq 0, \text{ for all } i=1, \dots, r \}.
	$$
	
	The map $\sigma: \RR^d \to \RR^r$ given by
	$$
	\sigma(\hb)= (\langle \hb, \nb_1 \rangle, \dots, \langle \hb, \nb_r \rangle), \text{ for all } \hb \in \RR^d
	$$
	is clearly $\RR$-linear and $\sigma(H) \subseteq \NN^r$.
	
	Let $k_0= \max \{ \langle \bb, \nb_j \rangle: j=1, \dots, r \}$.  For any integer $k> k_0$,  any  $\cb_1, \dots, \cb_k \in H \sect\; \relint C$, and any $1\leq j \leq r$ ,  the $j$-th component of $\sigma(\cb_1+\dots +\cb_k- \bb)$ equals $(\sum_{i=1}^k \langle \cb_i, \nb_j \rangle ) - \langle \bb, \nb_j \rangle \geq k-k_0 >0$,   hence   $\cb_1+\dots+ \cb_k \in \bb+ H$.
\end{proof}

\begin{Theorem}
	\label{tricky}
Let $R=K[H]$, $J=(\xb^{\ab_i}-\xb^{\ab_j}\:\; i,j =1,\ldots,d)R$ and  $0\neq \bb \in \omega_H$. 
  Then the following statements are equivalent:
	\begin{enumerate}
		\item [(i)] there exists an integer $k$ such that $\mm^{k+1}=J\mm^k$ modulo the ideal $\xb^\bb R$;
		\item [(ii)] $\sum^d_{j=1}[\cb]_j\geq 1$, for any $\cb\in B_H \setminus\omega_H$.
 	\end{enumerate} 
\end{Theorem}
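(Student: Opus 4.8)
The plan is to prove the equivalence by establishing both directions, using the decomposition of elements of $H$ via the fundamental parallelotope and the reduction criterion supplied by the earlier results.

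For the implication $(ii)\Rightarrow(i)$, I would show that condition $(ii)$ forces a uniform control on how products of generators distribute over the faces, which lets me bound the Hilbert function of $R/(\xb^\bb R)$ appropriately. The natural approach is to use Lemma~\ref{lemma:k-for-u}: it produces an integer $k$ such that $\cb_1+\dots+\cb_k\in\bb+H$ whenever all $\cb_i\in\omega_H$. The statement $\mm^{k+1}=J\mm^k$ modulo $\xb^\bb R$ means that every monomial of degree $k+1$ in $\mm$ lies in $J\mm^k+\xb^\bb R$. The key observation is that $J$ identifies the monomials $\xb^{\ab_i}$ with one another, so working modulo $J$ I can always slide a factor $\xb^{\ab_i}$ into a factor $\xb^{\ab_1}$, and thus every element of $\mm^{k+1}$ becomes congruent modulo $J\mm^k$ to a product in which the non-extremal generators are multiplied together. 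Here condition $(ii)$ enters: for $\cb\in B_H\setminus\omega_H$ (which, by Lemma~\ref{lemma:omega}-type reasoning, means $\cb$ sits on a proper face but is not in the interior), the condition $\sum_j[\cb]_j\geq 1$ guarantees that $\cb$ can be written modulo $J$ in a form that already contributes enough ``interior mass'' that a bounded product lands in $\bb+H$. I would make this precise by showing that modulo $J$, every generator $\cb\in B_H$ is congruent to an element $\cb'$ with $\cb'\in\omega_H$ or $\cb'$ having $\sum_j[\cb']_j\geq 1$, and then invoking Lemma~\ref{lemma:k-for-u} to absorb a $\bb$.

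For the converse $(i)\Rightarrow(ii)$, I would argue by contrapositive: suppose there exists $\cb\in B_H\setminus\omega_H$ with $\sum_{j=1}^d[\cb]_j<1$. Such a $\cb$ lies in $P_H$ (it is a Hilbert basis element not among the extremal rays, hence in the parallelotope) but is very close to the origin in the sense that its barycentric coordinates sum to less than one. I would then exhibit, for every $k$, a monomial in $\mm^{k+1}$ that fails to lie in $J\mm^k+\xb^\bb R$. The idea is that powers $\cb,2\cb,3\cb,\dots$ stay well inside the cone and, because $\sum_j[\cb]_j<1$, no bounded multiple of $\cb$ can reach $\bb+H$ while staying in the right congruence class modulo $J$; the congruence modulo $J$ cannot increase the ``$\omega_H$-depth'' of $\cb$ enough. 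Concretely, I expect that $\xb^{k\cb}$ (or a slight modification) gives the obstruction, since modulo $J$ one can only replace $\ab_i$-factors by $\ab_1$-factors without changing the total barycentric coordinate sum, and $k\cdot(\sum_j[\cb]_j)$ grows too slowly relative to the threshold needed to enter $\bb+H$.

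The main obstacle I anticipate is controlling precisely what happens modulo $J$, i.e.\ understanding the quotient $R/J$ as a combinatorial object. Working modulo $J$ collapses the extremal directions $\xb^{\ab_1},\dots,\xb^{\ab_d}$ onto a single variable, so the relevant invariant is the image of each Hilbert-basis element under a map that records its total barycentric coordinate sum $\sum_j[\cb]_j$ together with its class in $P_H$. The delicate point is that multiplication by $J$ lets me trade one $\ab_i$ for another but also lets me add any single $\ab_i$, and I must verify that condition $(ii)$ is exactly the threshold at which bounded products of generators, after such trades, are guaranteed to reach $\bb+H$ (hence vanish modulo $\xb^\bb R$), while its failure produces a persistent nonzero class. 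Pinning down this threshold, and verifying that the integer $k$ from Lemma~\ref{lemma:k-for-u} can be chosen uniformly, is where the real care is required.
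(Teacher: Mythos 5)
Your overall architecture is reasonable and your (i)$\Rightarrow$(ii) direction follows the paper's route (contrapositive via the additive functional $l(\cb)=\sum_j[\cb]_j$, which is indeed preserved by the binomial relations generating $J$), but the (ii)$\Rightarrow$(i) direction has a genuine gap. Your plan is to show that modulo $J$ every generator is congruent to something in $\omega_H$ or with $l\geq 1$, and then to ``invoke Lemma~\ref{lemma:k-for-u} to absorb a $\bb$.'' The first claim is just a restatement of hypothesis (ii) (take $\cb'=\cb$), and Lemma~\ref{lemma:k-for-u} applies only to products of elements of $\omega_H$: a product of arbitrarily many copies of a boundary generator $\cb$ (one with $[\cb]_{i_0}=0$) \emph{never} lands in $\bb+H$, since its $i_0$-th barycentric coordinate stays $0$ while $[\bb]_{i_0}>0$. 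So ``absorbing a $\bb$'' cannot handle monomials supported on a face, and those are exactly where (ii) must enter quantitatively. The mechanism you are missing is the paper's: (1) a telescoping identity showing that any product of $t$ extremal monomials lies in $J\mm^{t-1}+(\xb^{\ab_i})^tR$ for any chosen $i$, applied in blocks of sizes $n_i=\lfloor\lambda_i\rfloor+1$ (where $\bb=\sum_i\lambda_i\ab_i$) so that any product of $N=\sum_in_i$ extremal monomials lies in $(J\mm^{N-1},\prod_i\xb^{n_i\ab_i}R)\subseteq(J\mm^{N-1},\xb^\bb R)$ by Lemma~\ref{b-in-omega}; (2) hypothesis (ii) rewritten as $q\cb=\sum_jp_j\ab_j$ with $\sum_jp_j\geq q$, so that a suitable power of $\xb^{\cb}$ literally \emph{is} a product of at least $N$ extremal monomials and falls under (1); and (3) a pigeonhole choice of $k$ so that any product of $k+1$ generators either contains $k_0$ interior factors (then Lemma~\ref{lemma:k-for-u} applies) or $N_1$ copies of a single boundary generator (then (1)--(2) apply).

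In the (i)$\Rightarrow$(ii) direction two details you gloss over are essential. You must take $\cb$ to be the \emph{minimizer} of $l$ over $B_H\setminus(\omega_H\cup\{\ab_1,\dots,\ab_d\})$, and you must observe that if $\xb^{(k+1)\cb}=\xb^{\ab_j}\xb^{\ub_1+\cdots+\ub_k}$ then every $\ub_i$ has vanishing $i_0$-th barycentric coordinate, hence decomposes into Hilbert basis elements all lying outside $\omega_H$. Without these two points the lower bound $l(\ub_i)\geq\min\{1,l(\cb)\}$ fails, because interior Hilbert basis elements can have $l$ well below $1$ (for $\ab_1=(5,2)$, $\ab_2=(2,5)$ the element $(1,1)\in\omega_H\cap B_H$ has $l=2/7$), and the key inequality $(k+1)l(\cb)\geq 1+k\min\{1,l(\cb)\}$ would not follow. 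Also note that the obstruction monomial $\xb^{(k+1)\cb}$ avoids $\xb^\bb R$ not because $l$ ``grows too slowly'' but because it stays on a proper face of the cone for every $k$.
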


\begin{proof} 
 We denote $\{\cb_1,\dots,\cb_r\}=B_H\setminus(\{\ab_1,\dots,\ab_d\}\cup\omega_H)$. 

(i)$\implies$(ii):   
For any $\cb$ in $H$ we set $l(\cb)=\sum^d_{j=1}[\cb]_j$. Clearly,  $l(\ab_i)=1$ for $i=1,\dots,d$, so if $r=0$, we are done.
Assuming  $r>0$, we pick $t$ such that $l(\cb_t)=\min\{l(\cb_j): j=1,\dots,r\}$.
Let $k>0$ so that $\mm^{k+1}+\xb^\bb R= J \mm^k+\xb^\bb R$, i.e. $\mm^{k+1}\subseteq J \mm^k+\xb^\bb R$.

As $\cb_t \notin \omega_H$, there exists $1\leq i_0 \leq d$ with $[\cb_t]_{i_0}=0$, hence $(k+1) \cb_t \notin \bb+H$.
From $\xb^{(k+1)\cb_t} \in J\mm^k+\xb^\bb R$ we get that
$$
\xb^{(k+1)\cb_t}=\xb^{\ab_j}\xb^{\ub_1+\dots+\ub_k},
$$
for some $1\leq j \leq d$ and $\ub_1,\dots, \ub_k \in H\setminus (\omega_H \union \{0\})$.
 Consequently, 
\[
(k+1)l(\cb_t)=1+\sum^k_{i=1}l(\ub_i)\geq 1+k\min\{1,l(\cb_t)\},
\]
which implies $l(\cb_t)\geq 1$. 

(ii)$\implies$(i): 
Let $u=\xb^\bb$	and  $v_i=\xb^{\ab_i}$ for $i=1,\dots, d$. We decompose $\bb=\sum_{i=1}^d \lambda_i \ab_i$ with $\lambda_i \geq  0$ and we set $n_i=\lfloor \lambda_i \rfloor +1$ for all $i=1,\dots, d$ and $N=\sum_{i=1}^d n_i$.
	
	We claim that for any positive integer $t$, any $i_1, \dots, i_t \in \{ 1,\dots, d \}$ and any $v\in\{v_1, \dots, v_d\}$ one has
	\begin{equation}\label{eq:prod}
	v_{i_1}\cdots v_{i_t} \in J \mm^{t-1} +v^tR.
	\end{equation}
	Indeed, this is a consequence of the following equations:
	\begin{eqnarray*}
		v_{i_1}\cdots v_{i_t} &=& (v_{i_1}-v)\cdot v_{i_2}\cdots v_{i_t} +v\cdot v_{i_2}\cdots v_{i_t} \\
		& = &   (v_{i_1}-v)\cdot v_{i_2}\cdots v_{i_t} +  v(v_{i_2}-v)\cdot v_{i_3}\cdots v_{i_t}+ v^2 v_{i_3}\cdots v_{i_t} \\
		&= & \Sigma_{j=1}^d  v^{j-1} \cdot (v_{i_j}-v)\cdot v_{i_{j+1}}\cdots v_{i_t} + v^t.
	\end{eqnarray*}
	
	Now let $i_1, \dots, i_N \in \{1,\dots, d \}$.  In the product $v_{i_1}\dots v_{i_N}$ we apply \eqref{eq:prod}   to the first $n_1$ terms, then to the next $n_2$ terms, etc. and  we obtain that
	\begin{equation}
	\label{prod2}
	v_{i_1}\dots v_{i_N} \in \prod_{i=1}^d (J\mm^{n_i-1}, v_i^{n_i}) \subseteq  (J\mm^{N-1}, \prod_{i=1}^d v_i^{n_i}) \subseteq (J\mm^{N-1}, uR),
	\end{equation}
	where for the last inclusion we used  Lemma \ref{b-in-omega}.

For $1\leq i\leq r$ we may write $\cb_i=\sum_{j=1}^d \frac{p_{ij}}{q_i}\ab_i$, where $q_i, p_{ij}$ are nonnegative integers and $q_i >0$. Hence, $Nq_i \cdot \cb_i=\sum_{j=1}^d Np_{ij} \ab_j$, where by the hypothesis of (ii) we have $N\leq N q_i \leq \sum_{j=1}^d Np_{ij}$. Thus, using \eqref{prod2} we derive
\begin{equation*}
\label{someeq}
(\xb^{\cb_i})^{Nq_i}= \prod_{j=1}^d (\xb^{\ab_j})^{Np_{ij}} \in (J\mm^{\sum_{j=1}^{d} N p_{ij}-1}, uR) \subseteq (J\mm^{N q_i -1}, uR).
\end{equation*}
We set  $N_1=\max\{ N q_i:1\leq i\leq r \}$ if $r>0$, otherwise we let $N_1=N$. Then 
\begin{equation}
\label{somesthingelse}
(\xb^\cb)^{N_1} \subseteq (J\mm^{N_1-1}, uR), \text{ for all } \cb \in B_H\setminus \omega_H.
\end{equation}

	Let $k_0$ be a positive integer satisfying the conclusion of Lemma \ref{lemma:k-for-u} for the  element $\bb$. We set $k=k_0+N_1 \cdot |B_H\setminus \omega_H|-2$.
	
	Let $w$ be any product  of $k+1$ monomial generators of $\mm$. If the exponents of at least $k_0$ of them  are in $\relint C$, then by the choice of  $k_0$ we get that $w \in uR$. Otherwise,  we may write  $w= (\xb^{\cb})^{N_1}  \cdot w'$ for some $\cb\in B_H\setminus\omega_H$ and $w' \in \mm^{k+1-N_1}$. In the latter case, using \eqref{somesthingelse} we infer that $w \in J \mm^k + uR$.
	This shows that $\mm^{k+1} +uR= J\mm^k + uR$, which completes the proof.
\end{proof}

The semigroups $H$ satisfying the equivalent conditions of Theorem~\ref{tricky}  deserve a special name.	
	
	\begin{Definition}
		{\em
		A   semigroup $H\in \Nc_d$ is called {\em slim} if $\sum^d_{i=1}[c]_i \geq 1$, for any $\cb\in B_H \setminus \omega_H$. 
	}
	\end{Definition}
We will denote  by $\Hc_d$ the class of slim semigroups in $\NN^d$.

\begin{Remark}{\em
\label{rem:many-slim}
Let $H \in \Nc_d$. If $B_H\setminus \omega_H=\{\ab_1,\dots, \ab_d\}$, then $H$ is slim.

In particular, by Lemma~\ref{lemma:omega}, any normal semigroup in $\NN^2$ is slim, so $\Nc_2=\Hc_2$.}
\end{Remark}

\begin{Proposition}
\label{slim-3d}
Let $H\in \Nc_3$. Then $H$ is slim if and only if $[\cb]_1+[\cb]_2+[\cb]_3=1$ for any $\cb\in B_H\setminus \omega_H$.
\end{Proposition}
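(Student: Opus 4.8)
The backward implication will be immediate: if every $\cb\in B_H\setminus\omega_H$ satisfies $[\cb]_1+[\cb]_2+[\cb]_3=1$, then in particular $\sum_{j=1}^3[\cb]_j\ge 1$, so $H$ is slim by definition. All the content lies in the forward direction, where the task is to upgrade the inequality $\sum_j[\cb]_j\ge 1$ supplied by slimness to an equality. Writing $l(\cb)=\sum_{j=1}^3[\cb]_j$, I would fix $\cb\in B_H\setminus\omega_H$ and split according to how many coordinates $[\cb]_j$ vanish. Since $\cb\in H=C\cap\ZZ^3$ we have $[\cb]_j\ge 0$ for all $j$, and $\cb\notin\omega_H$ forces at least one coordinate to be zero. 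If two of them vanish, $\cb$ lies on an extremal ray and, being indecomposable in $B_H$, must be the primitive vector on that ray, i.e. $\cb=\ab_i$, so $l(\cb)=1$; three vanishing coordinates would give $\cb=0\notin B_H$. Thus the only substantial case is exactly one zero coordinate, say $[\cb]_3=0$ with $[\cb]_1,[\cb]_2>0$, and that is where I would concentrate.

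In this case I would first use indecomposability of $\cb$ to show $[\cb]_1<1$ and $[\cb]_2<1$: if $[\cb]_1\ge 1$, then $\cb-\ab_1$ has all coordinates $\ge 0$ and is nonzero (because $[\cb]_2>0$), so $\cb-\ab_1\in H\setminus\{0\}$ and $\cb=\ab_1+(\cb-\ab_1)$ decomposes, contradicting $\cb\in B_H$. The key move is then a \emph{reflection} in the facet spanned by $\ab_1,\ab_2$: set
\[
\cb'=\ab_1+\ab_2-\cb,
\]
which has coordinates $(1-[\cb]_1,\,1-[\cb]_2,\,0)$. By the previous step these are $>0,>0,=0$, so $\cb'\in H\setminus\{0\}$ and $l(\cb')=2-l(\cb)$. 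Next I would write $\cb'=\sum_k m_k\db_k$ as a sum of Hilbert basis elements $\db_k\in B_H$ with integers $m_k\ge 1$. Since $[\cb']_3=\sum_k m_k[\db_k]_3=0$ and each $[\db_k]_3\ge 0$, every summand satisfies $[\db_k]_3=0$, hence $\db_k\in B_H\setminus\omega_H$, and slimness gives $l(\db_k)\ge 1$. Therefore $l(\cb')=\sum_k m_k\,l(\db_k)\ge\sum_k m_k\ge 1$, that is $2-l(\cb)\ge 1$, so $l(\cb)\le 1$. Combined with the slimness inequality $l(\cb)\ge 1$ this yields $l(\cb)=1$.

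The step I expect to need the most care is ensuring that $\cb'$ is a genuine nonzero element of $H$ and that each Hilbert basis summand of $\cb'$ again lies outside $\omega_H$, so that slimness may be applied to it. The first point is precisely why the preliminary bounds $[\cb]_1,[\cb]_2<1$ are needed, since otherwise $\cb'$ could fall outside the cone $C$; the second is clean because $\cb'$ sits in the relative interior of the facet $\{[\,\cdot\,]_3=0\}$ and the functional $[\,\cdot\,]_3$ is additive and nonnegative on $H$, which pins every summand to that same facet. A pleasant feature of this plan is that it stays entirely inside $\NN^3$ and does not require passing to the two-dimensional facial semigroup or invoking Lemma~\ref{lemma:omega}.
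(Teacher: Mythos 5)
Your proof is correct and follows essentially the same route as the paper: both arguments reduce to the case of exactly one vanishing coordinate, form the reflection $\ab_1+\ab_2-\cb$ (which lies in $H$ because $0<[\cb]_1,[\cb]_2<1$), and apply slimness to a Hilbert-basis summand of that reflection, which is forced onto the facet $[\,\cdot\,]_3=0$ by additivity and nonnegativity of $[\,\cdot\,]_3$. The only differences are presentational (direct argument versus contradiction, and your more explicit treatment of the extremal-ray case).
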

\begin{proof} Assume $H$ is slim.
If $B_H\setminus \omega_H =\{\ab_1,\ab_2, \ab_3\}$, there is nothing left to prove.
Assume there exists $\cb\in B_H\setminus (\omega_H \cup \{\ab_1, \ab_2, \ab_3\})$ such that $\sum_{i=1}^3 [\cb]_i >1$.
Without loss of generality, we may assume that $0<[\cb]_i <1$ for $i=1,2$ and  $[\cb]_3=0$.
Arguing as in the proof of Lemma~\ref{b-in-omega} we obtain that $0\neq \ab_1+\ab_2-\cb \in \NN \ab_1+\NN \ab_2 \subset H$.
We may write $\ab_1+\ab_2-\cb=\bb+\hb$, where $\bb\in B_H\setminus \omega_H$ and $\hb \in H$.
Then $\sum_{i=1}^3 [\bb]_i = [\bb]_1+[\bb]_2 \leq \sum_{i=1}^2 [\ab_1+\ab_2-\cb]_i= 2-\sum_{i=1}^2 [\cb_i] <1$, which is false since $H$ is slim.
\end{proof}

\begin{Example}
\label{ex:nonslim}{\em
The semigroup $L\in \Nc_3$ with the extremal rays $\ab_1=(11,13,0)$, $\ab_2=(3,4,0)$ and $\ab_3=(0,0,1)$ is not slim. Indeed, $B_L=\{\ab_1, \ab_2, \ab_3, (4,5,0), (5,6,0)\}$ (compare with Example~\ref{bott-Ul}) and it is easy to check that $\sum_{i=1}^3 [(4,5,0)]_i=4/5<1$.}
\end{Example}

\section{Ulrich elements and the almost Gorenstein property}

The theory of almost Gorenstein rings has its origin in the theory of the almost symmetric numerical semigroups in \cite{BF}. If $R$ is the semigroup ring of a numerical semigroup,  then the semigroup is almost symmetric, if and only if  there  exists an exact sequence
\begin{eqnarray}
\label{basic}
0\to R \to  (\omega_R)(-a) \to  E \to 0,
\end{eqnarray}
where $E$ is annihilated by the graded maximal ideal of $R$, see \cite{Herzog-Watanabe}. Here $\omega_R$ denotes the canonical module of $R$ and $-a$ the smallest degree of a generator of $\omega_R$, i.e. $-a =\min\{k : (\omega_R)_k\neq 0\}$

In \cite{GMT} the $1$-dimensional positively graded rings which admit such an exact  sequence are called almost Gorenstein.

Goto et al. \cite[Definition 8.1]{GTT-2015} extended the concept of the  almost Gorenstein property to rings of higher dimension:
let $R$ be a positively graded  Cohen--Macaulay  $K$-algebra with $a$-invariant $a$.  Then  $R$ is called {\em graded almost Gorenstein}, if there exists an exact sequence like in (\ref{basic}),
where $E$ is an Ulrich module.

Ulrich modules are defined as follows: let $(R, \mm, K)$  be a local (or positively graded)  ring with (graded) maximal ideal $\mm$, and let $M$ be a (graded) Cohen-Macaulay module  over $R$. Then the minimal number of generators $\mu(M)$ of $M$  is bounded by the multiplicity $e(M)$ of $M$. The module $M$ is called an {\em Ulrich module},  if  $\mu(M) = e(M)$.  In  \cite{Ul} Ulrich asked whether any Cohen--Macaulay ring admits an Ulrich module $M$  with $\dim M=\dim R$. At present this question is still open, and  has an affirmative answer for example when $R$ is a hypersurface ring  \cite{HB}.

In the case of almost symmetric numerical semigroup rings, the module $E$ in  the exact  sequence (\ref{basic}) is of Krull dimension zero. A graded module $M$ with $\dim M=0$ is  Ulrich if and only if $\mm M=0$.  Thus the above definition \cite[Definition 8.1]{GTT-2015} is a natural extension of $1$-dimensional almost Gorenstein rings to higher dimensions.

\medskip

We propose the  following  multigraded version of the almost Gorenstein property for normal semigroup rings.

\begin{Definition}{\em
	\label{def:ag}
Let $H$ be a normal affine semigroup and $R=K[H]$. For $\bb \in\omega_H$ consider the following exact sequence
\begin{eqnarray}
\label{basicexact}
0\to R \to  \omega_R(\bb) \to  E \to 0,
\end{eqnarray}
where $1\in R$ is mapped to $u=\xb^\bb$ and  $E =\omega_R/uR$.
Then  $\bb$ is called  an \textit{Ulrich element} in $H$, if $E$ is an Ulrich $R$-module.

If $H$ admits  an Ulrich element $\bb$, then we call the ring $R$ {\em almost Gorenstein with respect to $\bb$}, or  simply  AG if $H$ has an Ulrich element.
}
\end{Definition}

\begin{Theorem}
	\label{criterion} 
Let   $H\in\Hc_d$  with extremal rays $\ab_1,\ldots,\ab_d$,  and let $\mm$ be the graded maximal ideal of $R=K[H]$.
	Let   $\bb \in \omega_H$,  $u=\xb^\bb$ and $J=(\xb^{\ab_i}-\xb^{\ab_j}\:\; i,j =1,\ldots,d)R$. 
    Then $\bb$  is an Ulrich element  in $H$ if and only if 
\begin{eqnarray}
\label{Kinfinite}
\mm \omega_R\subseteq (uR,J\omega_R).
	\end{eqnarray}

\end{Theorem}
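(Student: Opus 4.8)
The plan is to realize the Ulrich condition $\mu(E)=e(E)$ as a colength comparison in which $J$ plays the role of a common minimal reduction. Throughout write $u=\xb^\bb$ and $\theta_i=\xb^{\ab_1}-\xb^{\ab_i}$ for $i=2,\ldots,d$, so that $J=(\theta_2,\ldots,\theta_d)R$ since $\xb^{\ab_i}-\xb^{\ab_j}=\theta_j-\theta_i$. I may assume $E\neq 0$: if $E=0$ then $\omega_R=uR$, the ring is Gorenstein, and both statements hold trivially, as $\mm\omega_R=\mm uR\subseteq uR$.

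First I would pin down the module $E=\omega_R/uR$. Since $\omega_R\subseteq R$ as ideals, we have $u\omega_R\subseteq uR$, so $u$ annihilates $E$; hence $\dim E\leq \dim R/uR=d-1$, the last equality because $u$ is a nonzerodivisor on the $d$-dimensional Cohen--Macaulay ring $R$. On the other hand, the defining sequence $0\to R\xrightarrow{\cdot u}\omega_R\to E\to 0$ has $\depth R=\depth\omega_R=d$, so the depth lemma gives $\depth E\geq\min(\depth R-1,\depth\omega_R)=d-1$. Therefore $E$ is a Cohen--Macaulay $R$-module with $\dim E=\depth E=d-1$.

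Next I would show that $\theta_2,\ldots,\theta_d$ is a system of parameters for $E$ which is also a reduction of $\mm$. Since $H$ is slim, Theorem~\ref{tricky} yields an integer $k$ with $\mm^{k+1}+uR=J\mm^k+uR$, that is, $\bar\mm^{k+1}=\bar J\bar\mm^k$ in $\bar R=R/uR$. As $E$ is an $\bar R$-module, multiplying this ideal identity into $E$ gives $\mm^{k+1}E=J\mm^k E$, so $J$ is a reduction of $\mm$ on $E$ and consequently $e(\mm;E)=e(J;E)$. In particular $\mm^{k+1}E\subseteq JE$, so $E/JE$ has finite length and the $d-1=\dim E$ elements $\theta_2,\ldots,\theta_d$ form a system of parameters for $E$. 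Because $E$ is Cohen--Macaulay, this system of parameters is a regular sequence on $E$, whence $e(J;E)=\lambda(E/JE)$. Combining, $e(E)=e(\mm;E)=\lambda(E/JE)$.

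Finally I would compare colengths. By Nakayama, $\mu(E)=\lambda(E/\mm E)$, and since $J\subseteq\mm$ we have $JE\subseteq\mm E$, so $\lambda(E/\mm E)\leq\lambda(E/JE)=e(E)$, with equality if and only if $\mm E=JE$. Hence $E$ is Ulrich precisely when $\mm E=JE$. Lifting this equality to $\omega_R$ reads $\mm\omega_R+uR=J\omega_R+uR$, and since $J\omega_R\subseteq\mm\omega_R$ it is equivalent to $\mm\omega_R\subseteq(uR,J\omega_R)$, which is \eqref{Kinfinite}. The crux of the argument is the slimness hypothesis: through Theorem~\ref{tricky} it is exactly what makes $J$ a reduction of $\mm$ modulo $uR$, and hence forces $e(E)=\lambda(E/JE)$; without this identification of the multiplicity with a colength the whole equivalence would break down.
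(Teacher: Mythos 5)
Your proof is correct and follows essentially the same route as the paper: establish that $E$ is Cohen--Macaulay of dimension $d-1$, use Theorem~\ref{tricky} (via slimness) to make $J$ a reduction of $\mm$ on $E$, and convert the Ulrich condition $\mu(E)=e(E)$ into the colength equality $\length(E/\mm E)=\length(E/JE)$, i.e.\ $\mm E=JE$. The only cosmetic differences are that you bound $\dim E$ by observing $u$ annihilates $E$ (the paper argues via rank), and you prove the criterion ``$E$ Ulrich $\iff \mm E=JE$'' directly rather than citing \cite[Proposition 2.2(2)]{GTT-2015} for one direction.
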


\begin{proof}
	Since $uR$ and $\omega_R$ are Cohen--Macaulay  $R$-modules  of dimension $d$, we see (keeping the notation from \eqref{basicexact}) that  $\depth E\geq d-1$, and since $uR$ and $\omega_R$  are rank $1$ modules, we deduce that $\Ann(E)\neq 0$. Therefore, $\dim E\leq d-1$, and this implies that  $E$ is a Cohen--Macaulay $R$-module of dimension $d-1$.

	Suppose that (\ref{Kinfinite}) holds. By \cite[Proposition 2.2.(2)]{GTT-2015},  it follows that $E$  is an Ulrich module  since  (\ref{Kinfinite}) implies that  $\mm E=JE$ and since $J$ is generated by $d-1(=\dim E)$ elements, namely by the elements $f_j=\xb^{\ab_1}-\xb^{\ab_j}$ with  $j=2,\ldots,d$. Thus $\bb$ is an Ulrich element in $H$.

	Conversely, assume that $\bb$ is an Ulrich element. Then $E$ is an Ulrich module, and therefore $\length (E/\mm E)=e(E)$. It follows from Theorem~\ref{tricky} that $J$ is a reduction ideal  of $\mm$ with respect to $E$. Thus by \cite[Lemma 4.6.5]{BH}, $e(E)=e(J,E)$, where $e(J,E)$ denotes the Hilbert-Samuel multiplicity of $E$ with respect to $J$.  Since $E$ is Cohen--Macaulay of dimension $d-1$, and since $J$ is generated by the $d-1$ elements $f_2,\ldots,f_d$  and $\length (E/JE)<\infty$, we see that $f_2,\ldots,f_d$ is a regular sequence on $E$. Thus \cite[Theorem 4.7.6]{BH} implies that $e(J,E)=\length(E/JE)$. Hence, $\length (E/\mm E)=\length (E/JE)$. Since $JE\subseteq \mm E$, it follows that $\mm E=JE$,  and this implies (\ref{Kinfinite}).
\end{proof}
 
\begin{Remark}
{\em
It follows from the proof of Theorem~\ref{criterion} that if $H\in \Nc_d$ and \eqref{Kinfinite} holds for some ideal 
$J\subset \mm$, generated by $d-1$ elements, then $\bb$ is an Ulrich element in $H$.
}
\end{Remark}

\begin{Example} 
\label{gore-regular}{\em  (Ulrich elements in Gorenstein and regular rings)
\begin{enumerate}
\item[(a)] If $K[H]$ is a Gorenstein ring and $G(\omega_H)=\{ \bb\}$, then $\omega_R=\xb^\bb R$, hence \eqref{Kinfinite} holds and $\bb$ is an Ulrich element in $H$.
\item[(b)] Assume $K[H]$ is a regular ring with $\ab_1,\dots, \ab_d$ the extremal rays of $H$. Set $\cb=\sum_{i=1}^d \ab_i$.
 Then $\ab_i+\cb$ is an Ulrich element in $H$ for any $i=1,\dots, d$. Indeed, since $\mm=(\xb^{\ab_j}:1\leq j \leq d)$  and $\xb^{\ab_j+\cb} =\xb^{\cb}(\xb^{\ab_j} - \xb^{\ab_i} ) +\xb^{\cb+\ab_i}$ for  $j=1,\dots, d$, we have that \eqref{Kinfinite} is verified for $\bb=\cb+ \ab_i$.
\end{enumerate}
}
\end{Example}

\begin{Example}{\em
	\label{ex:nice}
	Let $H\in \Hc_2$ having the extremal rays $\ab_1=(11,2)$ and $\ab_2= (31,6)$. A computation with Normaliz \cite{Normaliz} shows that the Hilbert basis of $H$ is
	$$
	B_H=\{ \ab_1, \ab_2, \bb=(16,3), \cb_1=(21,4), \cb_2=(26,5)\}.
	$$
	Moreover, $\bb$, $\cb_1$, $\cb_2$ are the only nonzero lattice points in $P_H$, and they all lie on the line $y=(x-1)/5$ passing through $\ab_1$ and $\ab_2$.
	Comparing componentwise, we have
	$$
	\ab_1 \preceq \bb \preceq \cb_1 \preceq \cb_2 \preceq \ab_2.
	$$
          We note that  $\ab_1 +\ab_2= \bb+\cb_2=2 \cb_1$.
	It is also straightforward to check that in $K[H]$ we have
	\begin{eqnarray*}
		\xb^{\ab_1}  \xb^{\cb_1}& = &(\xb^\bb)^2, \;
		\xb^{\ab_1}  \xb^{\cb_2} = \xb^\bb   \xb^{\cb_1}, \;
		\xb^{\cb_1}  \xb^{\cb_1} = \xb^\bb   \xb^{\cb_2}, \;
		\xb^{\cb_1}  \xb^{\cb_2} = \xb^\bb   \xb^{\ab_2}, \\
		\xb^{\cb_2} \xb^{\cb_2} &=&  \xb^{\ab_2} \xb^{\cb_1}= (\xb^{\ab_2}- \xb^{\ab_1}) \xb^{\cb_1}+ \xb^{\ab_1} \xb^{\cb_1}= (\xb^{\ab_2}- \xb^{\ab_1}) \xb^{\cb_1}+    (\xb^\bb)^2, \\
	   \xb^{\ab_2} \xb^{\cb_2} &=& (\xb^{\ab_2}-\xb^{\ab_1}) \xb^{\cb_2} + \xb^{\ab_1} \xb^{\cb_2} =
(\xb^{\ab_2}-\xb^{\ab_1}) \xb^{\cb_2}+\xb^{\bb} \xb^{\cb_1}.  
	\end{eqnarray*}
	Using    Theorem \ref{criterion} we conclude  that  $\bb$ is an Ulrich element in $H$, and hence $K[H]$ is AG.
}
\end{Example}

In the following special case, the possible Ulrich elements can be identified.

\begin{Proposition}\label{1}
Let $H$ be a semigroup in $\Hc_d$ whose nonzero elements have all the entries positive, and assume that $(1, \dots, 1)\in\omega_H$. If $H$ has an  Ulrich element  $\bb$, then $\bb=(1, \dots, 1)$.
\end{Proposition}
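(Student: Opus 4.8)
The plan is to apply Theorem~\ref{criterion}: since $H\in\Hc_d$, the element $\bb$ is Ulrich if and only if $\mm\omega_R\subseteq(\xb^\bb R,\,J\omega_R)$, and I will extract from this inclusion that $\bb$ must equal $\mathbf{1}=(1,\dots,1)$. Two consequences of the hypotheses will be used throughout: $\mathbf{1}\in\omega_H$, and every nonzero element of $H$ has all standard entries $\ge1$, so that $\mathbf{1}\preceq\hb$ for every $\hb\in H\setminus\{0\}$ and a nonzero vector in $\NN^d$ with a zero entry cannot lie in $H$. In particular $\bb\succeq\mathbf{1}$, so it suffices to prove $\bb\preceq\mathbf{1}$.

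First I would pin $\bb$ down to two candidates. Every monomial occurring in $J\omega_R$ has an exponent of the form $\ab_i+\gb$ with $i\in\{1,\dots,d\}$ and $\gb\in\omega_H$. Since $\ab_i\notin\omega_H$ while $\mathbf{1}\in\omega_H$, we have $\ab_i\neq\mathbf{1}$, hence $\ab_i\succ\mathbf{1}$ and therefore $\ab_i+\gb\succeq\ab_i+\mathbf{1}\succ 2\mathbf{1}$ in the componentwise order; thus $2\mathbf{1}$ is not the exponent of any monomial of $J\omega_R$. Now $\xb^{2\mathbf{1}}=\xb^{\mathbf{1}}\cdot\xb^{\mathbf{1}}\in\mm\omega_R$, so the criterion gives $\xb^{2\mathbf{1}}=\xb^\bb g+v$ with $g\in R$ and $v\in J\omega_R$. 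Comparing the coefficient of $\xb^{2\mathbf{1}}$ (which is $0$ in $v$) forces $2\mathbf{1}-\bb\in H$. As $\mathbf{1}\preceq\bb$, the vector $2\mathbf{1}-\bb$ has all entries $\le1$; being in $H$ it is either $0$ (giving $\bb=2\mathbf{1}$) or has all entries $\ge1$ (giving $\bb=\mathbf{1}$).

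The main obstacle is ruling out $\bb=2\mathbf{1}$, because here the naive coefficient argument fails: the natural test exponent $\ab_1+\mathbf{1}$ \emph{does} occur in $J\omega_R$ (its first $\ab$-coordinate exceeds $1$), so it can be absorbed. To handle this I would pass to an averaging functional over a $J$-orbit. Call $\kb,\kb'\in\omega_H$ equivalent if they are joined by ray-swaps $\kb\mapsto\kb-\ab_i+\ab_j$ performed inside $\omega_H$; each generator of $J\omega_R$ is a binomial whose two monomials are equivalent, so the $K$-linear functional $\Phi$ sending $f\in\omega_R$ to the sum of the coefficients of its monomials lying in the orbit $O=\{\ab_i+\mathbf{1}:i=1,\dots,d\}$ vanishes on $J\omega_R$. (That $O$ is a full orbit uses $[\mathbf{1}]_l<1$ for all $l$, so $\ab_i+\mathbf{1}$ admits a swap only in its $i$-th coordinate.) Assuming $\bb=2\mathbf{1}$ is Ulrich, apply the criterion to $\xb^{\ab_1+\mathbf{1}}\in\mm\omega_R$, write $\xb^{\ab_1+\mathbf{1}}=\xb^{2\mathbf{1}}g+v$ with $v\in J\omega_R$, and evaluate $\Phi$: the left side yields $1$ while $\Phi(v)=0$, so some monomial $\xb^{2\mathbf{1}+\hb}$ of $\xb^{2\mathbf{1}}g$ must lie in $O$. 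But $2\mathbf{1}+\hb=\ab_i+\mathbf{1}$ forces $\hb=\ab_i-\mathbf{1}$, which has a negative entry and so is not in $H$ — a contradiction. Hence $\bb\neq2\mathbf{1}$, and we conclude $\bb=\mathbf{1}$.

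Equivalently, this last exclusion can be phrased by reducing modulo $J$: the ring $\bar R=R/J$ is one-dimensional and free over $K[y]$, where $y$ is the common image of the $\xb^{\ab_i}$, and the Ulrich condition becomes $\bar\mm\,\bar\omega\subseteq\bar\xb^{\bb}\bar R$; a residue-and-$y$-degree count then shows $y\cdot\bar\xb^{\mathbf{1}}\notin\bar\xb^{2\mathbf{1}}\bar R$. Either way, I expect this orbit/reduction bookkeeping to be the only genuine difficulty; the remaining steps are forced directly by the positivity hypothesis on the entries of $H$.
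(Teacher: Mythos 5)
Your argument is correct in substance and is in fact more careful than the paper's own proof. Both start identically: test $\xb^{2\mathbf{1}}\in\mm\omega_R$ against the criterion of Theorem~\ref{criterion}, note that no monomial of $J\omega_R$ can have exponent $2\mathbf{1}$ (since $\ab_i+\cb\succ 2\mathbf{1}$ componentwise for every $\cb\in\omega_H$), and conclude $2\mathbf{1}-\bb\in H$, which by the positivity hypothesis forces $\bb\in\{\mathbf{1},\,2\mathbf{1}\}$. The paper essentially stops there, arguing that some component of $2\mathbf{1}-\bb$ is $\le 0$ and that this contradicts positivity; that step tacitly assumes $2\mathbf{1}-\bb\neq 0$ and so does not by itself exclude $\bb=2\mathbf{1}$, which is a legitimate element of $\omega_H$. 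You isolate exactly this residual case and dispose of it with the swap-orbit functional $\Phi$ supported on $O=\{\ab_i+\mathbf{1}: i=1,\dots,d\}$. That part checks out: $J\omega_R$ is spanned by the binomials $\xb^{\ab_i+\cb}-\xb^{\ab_j+\cb}$ with $\cb\in\omega_H$, and $O$ is swap-closed because $[\mathbf{1}]_l<1$ for all $l$ (if $[\mathbf{1}]_l\geq 1$ then $\mathbf{1}-\ab_l\in H$ would force $\ab_l=\mathbf{1}\in\omega_H$), so a swap of $\ab_i+\mathbf{1}$ that stays in the cone must remove $\ab_i$. Hence $\Phi$ kills $J\omega_R$, while $\Phi(\xb^{\ab_1+\mathbf{1}})=1$, and the contradiction follows. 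This extra step is a genuine improvement over the published argument.

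One local slip: you reject $\hb=\ab_i-\mathbf{1}$ on the grounds that it ``has a negative entry,'' but under the standing hypothesis every standard entry of $\ab_i-\mathbf{1}$ is $\ge 0$. The conclusion $\ab_i-\mathbf{1}\notin H$ is nonetheless correct and easy to repair: $\ab_i-\mathbf{1}\neq 0$ (else $\ab_i=\mathbf{1}\in\omega_H$), and if it belonged to $H$ then $\ab_i=\mathbf{1}+(\ab_i-\mathbf{1})$ would contradict $\ab_i\in B_H$; alternatively, $[\ab_i-\mathbf{1}]_k=-[\mathbf{1}]_k<0$ for $k\neq i$, so it lies outside the cone. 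The closing reformulation via $R/J$ is only sketched, but it is not needed, since the orbit argument already completes the proof.
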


\begin{proof}
We set $\bb'=(1,\dots, 1)$.
Assume, on the contrary,  that $\bb\neq \bb'$. Then  by the criterion in Theorem \ref{criterion} and using the same notation, we get that   $\xb^{\bb'} \cdot \xb^{\bb'} \in (\xb^\bb R, J \omega_R)$. This implies that  $(2,\dots, 2)=2\bb'= \bb +\cb$ for some $\cb \in H$, or that $2\bb'= \ab_i+\hb$ for some $1\leq i \leq d$ and $\hb\in \omega_H$, $\hb\neq \bb'$.
	Since $(1,\dots, 1)$ is the smallest element of $\omega_H$ when comparing vectors componentwise,
  at least one component of $\bb$ (respectively, of $\hb$)  is at least two, hence
	at least one component of $\cb$ (respectively, of $\ab_i$) is less than or equal to zero, which is false by the assumption that  all the entries of  nonzero elements of $H$ are positive.
\end{proof}


\section{The AG property for normal semigroups in dimension $2$}
\label{section:dim2}

As  mentioned before, all $2$-dimensional normal affine  semigroups are slim.
For them, in Theorem~\ref{two} we make more concrete the criterion for Ulrich elements given in Theorem~\ref{criterion}. 
We first prove  a couple of lemmas.

Throughout this section, unless otherwise stated, $H$ is a semigroup in $\Nc_2=\Hc_2$ with extremal rays $\ab_1$ and $\ab_2$.
We denote by $\mm$ the graded maximal ideal of $R=K[H]$.

\begin{Lemma}
	\label{nice}
	Let $\bb$ be an element in $B_H\setminus\{\ab_1,\ab_2\}$. 
	For any $\cb \in \omega_H $ such that $\cb \notin \bb+H$,  there exists $t\in \{1,2\}$ such that $\cb+\ab_t \in \bb+H$.
\end{Lemma}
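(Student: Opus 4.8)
The plan is to pass to the coordinates $[\,\cdot\,]_1,[\,\cdot\,]_2$ relative to the basis $\ab_1,\ab_2$ and reduce the claim to a simple coordinatewise dichotomy. Recall from the background that a lattice point $\zb$ lies in $H$ iff $[\zb]_1,[\zb]_2\ge 0$ and in $\omega_H$ iff $[\zb]_1,[\zb]_2>0$; since $[\ab_1]_1=1,[\ab_1]_2=0$ and $[\ab_2]_1=0,[\ab_2]_2=1$, the membership $\cb\in\bb+H$ is equivalent to $[\cb]_1\ge[\bb]_1$ and $[\cb]_2\ge[\bb]_2$. Because $\bb\in B_H\setminus\{\ab_1,\ab_2\}$, Lemma~\ref{lemma:omega} gives $\bb\in\relint P_H$, so $0<[\bb]_1<1$ and $0<[\bb]_2<1$.

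First I would note that adding an extremal ray raises exactly one coordinate by $1$, namely $[\cb+\ab_1]_1=[\cb]_1+1$ and $[\cb+\ab_1]_2=[\cb]_2$, and symmetrically for $\ab_2$. Since $\cb\in\omega_H$ forces $[\cb]_1,[\cb]_2>0$ while $[\bb]_1,[\bb]_2<1$, the raised coordinate automatically dominates the corresponding coordinate of $\bb$ (for instance $[\cb]_1+1>1>[\bb]_1$). Hence $\cb+\ab_1\in\bb+H$ as soon as $[\cb]_2\ge[\bb]_2$, and $\cb+\ab_2\in\bb+H$ as soon as $[\cb]_1\ge[\bb]_1$. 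The entire statement therefore reduces to proving the dichotomy that $[\cb]_1\ge[\bb]_1$ or $[\cb]_2\ge[\bb]_2$.

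The crux, where the hypothesis $\bb\in B_H$ is used, is to rule out the remaining case. Suppose both inequalities failed, i.e. $[\cb]_1<[\bb]_1$ and $[\cb]_2<[\bb]_2$. Then $\bb-\cb$ is a lattice point with $[\bb-\cb]_1>0$ and $[\bb-\cb]_2>0$, so $\bb-\cb\in\omega_H\subseteq H$ and in particular $\bb-\cb\neq 0$; since also $\cb\in\omega_H$ is a nonzero element of $H$, the identity $\bb=\cb+(\bb-\cb)$ would express $\bb$ as a sum of two nonzero elements of $H$, contradicting that $\bb$ lies in the Hilbert basis $B_H$. This establishes the dichotomy and hence the lemma.

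The only real obstacle is recognizing that indecomposability of $\bb$ is exactly the leverage needed: once the goal is reduced to the coordinatewise dichotomy, the simultaneous failure of both inequalities is precisely what places $\bb-\cb$ back in the open region $\omega_H$, and no Hilbert basis element can be split into two interior summands. I would also remark that the hypothesis $\cb\notin\bb+H$ is not actually needed to produce $t$ (if $\cb\in\bb+H$ the dichotomy holds trivially); it merely records the situation in which the conclusion is applied later.
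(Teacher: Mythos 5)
Your proof is correct and is essentially the paper's argument: the paper works with the facet normals $\nb_1,\nb_2$ (so that $\langle\cdot,\nb_i\rangle$ is a positive multiple of your $[\cdot]_j$), reduces to the same dichotomy, handles the "raised" coordinate via $\ab_1+\ab_2-\bb\in\omega_H$ from Lemma~\ref{b-in-omega} (which is exactly your $[\bb]_i<1$), and rules out the simultaneous failure of both inequalities by the same indecomposability of $\bb\in B_H$.
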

\begin{proof}
	Let $C$ be the cone with  the extremal rays $\ab_1$ and $\ab_2$.
	Let $\nb_1$ and $\nb_2$ be vectors normal to the facets of the cone $C$   such that $\langle \ab_i, \nb_i\rangle=0$ for $i=1,2$ and 
	$\xb \in C$ if and only if $\langle \xb, \nb_1 \rangle \geq 0$ and $\langle \xb, \nb_2 \rangle \geq 0$.
	
Since $\cb \notin \bb+H$ it follows that $\cb-\bb \notin C$.  We may assume that $\langle \cb-\bb, \nb_1\rangle<0$, and   claim  then that  $\cb+\ab_2 \in \bb+H$. Indeed,
\[
\langle \cb+\ab_2-\bb,\nb_1\rangle = \langle \cb,  \nb_1\rangle +\langle \ab_1+\ab_2-\bb,\nb_1\rangle>0,
\]
since $\langle \ab_1+\ab_2-\bb,\nb_1\rangle>0$, by Lemma~\ref{b-in-omega}, and
\[
\langle \cb+\ab_2-\bb,\nb_2\rangle = \langle \cb-\bb,  \nb_2\rangle >0,
\]
since otherwise $\cb-\bb\in -C=\{-\ab\:\; \ab\in C\}$, a contradiction  to $\bb \in B_H$.
\end{proof}

\begin{Lemma}
	\label{c-inside-I}
	Let $\bb$ belong to $B_H\setminus\{\ab_1,\ab_2\}$.
 We set  $I=(\xb^\bb R,(\xb^{\ab_1}-\xb^{\ab_2})\omega_R)\subset R$. 
Let $\cb \in \omega_H$. The following conditions are equivalent:
	\begin{enumerate}
		\item[(a)] $\xb^\cb\in I$;
		\item[(b)]   $\cb \in (\bb+H) \union (\ab_1+\omega_H) \union (\ab_2+\omega_H)$;
		\item[(c)]  $\cb \in (\bb+H) \union (\ab_1+ H) \union (\ab_2+ H)$.
	\end{enumerate}
\end{Lemma}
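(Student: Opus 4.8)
The plan is to study $I$ additively. Writing $R=\Dirsum_{\hb\in H}K\xb^\hb$ and $\omega_R=\Dirsum_{\db\in\omega_H}K\xb^\db$, one checks that as a $K$-vector space $I=W_1+W_2$, where $W_1$ is the span of the monomials $\xb^\hb$ with $\hb\in\bb+H$ (coming from $\xb^\bb R$) and $W_2$ is the span of the binomials $\xb^{\ab_1+\db}-\xb^{\ab_2+\db}$ with $\db\in\omega_H$ (coming from $(\xb^{\ab_1}-\xb^{\ab_2})\omega_R$, since multiplication by the fixed element $\xb^{\ab_1}-\xb^{\ab_2}$ is $K$-linear). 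Viewing the pairs $\{\ab_1+\db,\ab_2+\db\}$, $\db\in\omega_H$, as the edges of a graph $\Gamma$ on the vertex set $H$, the quotient $R/W_2$ has a $K$-basis indexed by the connected components of $\Gamma$, two monomials having the same image precisely when their exponents lie in one component. As $W_2\subseteq I$ and the image of $W_1$ in $R/W_2$ is spanned by the classes of the components meeting $\bb+H$, I obtain the reduction
\[
\xb^\cb\in I \iff \text{the component of }\cb\text{ in }\Gamma\text{ meets }\bb+H .
\]

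Next I describe the components. With $\vb=\ab_2-\ab_1$ one has $\ab_2+\db=(\ab_1+\db)+\vb$, so $\Gamma$ is a disjoint union of paths in the direction $\vb$, along which $[\,\cdot\,]_1$ drops by $1$ and $[\,\cdot\,]_2$ rises by $1$ at each step. An edge leaves a vertex $p$ to the right iff $[p]_1>1$ (and $[p]_2>0$) and to the left iff $[p]_2>1$ (and $[p]_1>0$); since our standing hypothesis is $\cb\in\omega_H$, the parenthetical conditions hold along the whole component of $\cb$. Hence $\{\cb\}$ is a full component iff $[\cb]_1\le1$ and $[\cb]_2\le1$, while in a component with at least two vertices every vertex lies in $(\ab_1+\omega_H)\cup(\ab_2+\omega_H)$. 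For a one-vertex component the reduction gives $\xb^\cb\in I\iff\cb\in\bb+H$, and this matches (b), because such a $\cb$ lies in neither $\ab_1+\omega_H$ nor $\ab_2+\omega_H$; the only boundary case $[\cb]_1=[\cb]_2=1$, that is $\cb=\ab_1+\ab_2$, already belongs to $\bb+H$ by Lemma~\ref{b-in-omega}.

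It remains to show that every component with at least two vertices meets $\bb+H$; together with the previous paragraph this yields (a)$\iff$(b) (in the multi-vertex case both conditions hold). Write such a component as $p^-,\dots,p^+$, where $[p^+]_1\in(0,1]$ and $[p^-]_2\in(0,1]$, and recall that $\bb\in\relint P_H$ gives $0<[\bb]_1,[\bb]_2<1$. If the component has at least three vertices, any interior vertex $q$ satisfies $[q]_1>1>[\bb]_1$ and $[q]_2>1>[\bb]_2$, so $q-\bb\in\omega_H$ and $q\in\bb+H$. The main obstacle is a two-vertex component $\{p^-,\,p^+=p^-+\vb\}$, where there is no interior vertex; here I use the minimality of $\bb$. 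If neither endpoint were in $\bb+H$, then from $[p^+]_2>1>[\bb]_2$ and $[p^-]_1>1>[\bb]_1$ we would be forced to have $[p^+]_1<[\bb]_1$ and $[p^-]_2<[\bb]_2$. Setting $\eb=p^+-\ab_2$ gives $0<[\eb]_1=[p^+]_1<[\bb]_1$ and $0<[\eb]_2=[p^+]_2-1=[p^-]_2<[\bb]_2$, so both $\eb$ and $\bb-\eb$ are nonzero elements of $\omega_H\subseteq H$, and $\bb=\eb+(\bb-\eb)$ contradicts $\bb\in B_H$.

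Finally, (b)$\Rightarrow$(c) is immediate from $\omega_H\subseteq H$. For (c)$\Rightarrow$(b), the term $\bb+H$ already appears in (b), and the cases $\ab_1+H$ and $\ab_2+H$ are symmetric, so I treat $\cb=\ab_1+\hb$ with $\hb\in H$. Since $[\hb]_2=[\cb]_2>0$, either $[\hb]_1>0$, whence $\hb\in\omega_H$ and $\cb\in\ab_1+\omega_H$, or $[\hb]_1=0$, whence $\hb=m\ab_2$ with $m\ge1$; then $\cb=\ab_1+m\ab_2$ lies in $\ab_2+\omega_H$ when $m\ge2$ and equals $\ab_1+\ab_2\in\bb+H$ when $m=1$. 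This closes the chain of equivalences, the only genuinely delicate point being the two-vertex components, which are handled by the Hilbert-basis minimality of $\bb$.
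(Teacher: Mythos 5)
Your proof is correct, but it takes a genuinely different route from the paper's. The paper argues directly on elements: for (a)\implies(b) it observes that if $\xb^\cb\notin\xb^\bb R$ then $\xb^\cb$ must survive as a monomial of $(\xb^{\ab_1}-\xb^{\ab_2})f$ for some $f\in\omega_R$, and for (b)\implies(a) it invokes the auxiliary Lemma~\ref{nice} (for $\cb\in\omega_H$ with $\cb\notin\bb+H$, one of $\cb+\ab_1$, $\cb+\ab_2$ lies in $\bb+H$) to write down the explicit identity $\xb^\cb=\xb^{\cb-\ab_1}(\xb^{\ab_1}-\xb^{\ab_2})+\xb^{\cb-\ab_1+\ab_2}$. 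You instead work linear-algebraically, identifying $R$ modulo the binomial part of $I$ with the free $K$-space on the components of your graph $\Gamma$, and reduce the lemma to the claim that every non-singleton component meets $\bb+H$. Note that Lemma~\ref{nice} amounts to the stronger statement that every single \emph{edge} of $\Gamma$ has an endpoint in $\bb+H$; your weaker component-level claim suffices, and you establish it from scratch using $\bb\in\relint P_H$ (for components with an interior vertex) and the Hilbert-basis minimality of $\bb$ (for two-vertex components) --- the same two ingredients that underlie the paper's Lemmas~\ref{b-in-omega} and~\ref{nice}. What your approach buys is a structural description of $R/(\xb^{\ab_1}-\xb^{\ab_2})\omega_R$ and a self-contained argument; what it costs is length, since the paper's edge-by-edge version makes the component bookkeeping unnecessary. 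Your treatment of (c)\implies(b) matches the paper's up to the cosmetic split between $m=1$ and $m\geq 2$.
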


\begin{proof}
	(a) \implies (b): Note that   $\xb^\cb \in \xb^\bb R$ if and only if  $\cb \in \bb+H$. If $\xb^\cb\notin \xb^{\bb}R$, then there exist $0\neq f$ in $\omega_R$ and $g$ in $R$ such that
	$$
	\xb^\cb = (\xb^{\ab_1} - \xb^{\ab_2})\cdot f + \xb^\bb \cdot g.
	$$
	Therefore, there exists a monomial $\xb^\ab$ in $\omega_R$ such that $\xb^\cb=\xb^{\ab_1} \cdot \xb^{\ab}$ or $\xb^\cb=\xb^{\ab_2} \cdot \xb^{\ab}$, equivalently  $\cb \in (\ab_1 + \omega_H)\union (\ab_2 + \omega_H)$.
	
	(b) \implies (a):  If $\cb \in \bb +H$ then clearly $\xb^\cb \in I$. Assume $\cb \notin \bb+H$. By symmetry, it is enough to consider the case  $\cb \in \ab_1 +\omega_H$.
	
	By Lemma \ref{nice}, since $0\neq \cb-\ab_1 \in \omega_H$, $\cb-\ab_1  \notin \bb +H$  and  $(\cb-\ab_1)+\ab_1=\cb \notin \bb+H$ it follows that $\cb-\ab_1 +\ab_2 \in \bb +H$.
	
	As we may write
	$$
	\xb^\cb= \xb^{\cb-\ab_1} \cdot (\xb^{\ab_1}-\xb^{\ab_2})+\xb^{\cb-\ab_1+\ab_2},
	$$
	we conclude that $\xb^\cb \in I$.
	
	(b) \implies (c) is trivial.
	
	For (c) \implies (b) it is enough to consider the case when $\cb \notin \bb+H$. We may assume
	 $\cb \in \ab_1 + H$. If $\cb \notin \ab_1 + \omega_H$, then there exists a positive integer $n$ such that
	either $\cb-\ab_1=n \ab_1$, or $\cb-\ab_1=n \ab_2$. In the first  case we get that $\cb =(n+1)\ab_1 \notin \omega_H$, a contradiction. In the second  case we get that $\cb = \ab_1+ n \ab_2 \in \bb+H$, since $\ab_1+\ab_2-\bb \in H$ by Lemma \ref{b-in-omega}. This  is again a contradiction.  Thus $\cb \in \ab_1+ \omega_H$.
	\end{proof}

\begin{Theorem}
	\label{two}
An element  $\bb\in B_H\setminus\{\ab_1,\ab_2\}$ is  an Ulrich element in $H$, if and only if
	$$
	\cb_1+\cb_2 \in    (\bb+H) \union (\ab_1+ H) \union (\ab_2+ H),  \text{ for all } \cb_1,\cb_2\in B_H.
	$$
\end{Theorem}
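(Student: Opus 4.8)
The plan is to reduce everything to the criterion of Theorem~\ref{criterion} and then apply the translation provided by Lemma~\ref{c-inside-I}. Since $H\in\Nc_2=\Hc_2$ is slim and $\bb\in B_H\setminus\{\ab_1,\ab_2\}\subseteq\omega_H$ by Lemma~\ref{lemma:omega}, Theorem~\ref{criterion} says that $\bb$ is an Ulrich element if and only if $\mm\omega_R\subseteq(uR,J\omega_R)$, where $u=\xb^\bb$ and, because $d=2$, the ideal $J$ is generated by the single element $\xb^{\ab_1}-\xb^{\ab_2}$. Hence $(uR,J\omega_R)$ is precisely the ideal $I=(\xb^\bb R,(\xb^{\ab_1}-\xb^{\ab_2})\omega_R)$ appearing in Lemma~\ref{c-inside-I}, and the whole argument becomes a matter of unwinding the containment $\mm\omega_R\subseteq I$.

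Next I would describe the module generators of $\mm\omega_R$. The maximal ideal $\mm$ is minimally generated by the monomials $\xb^\cb$ with $\cb\in B_H$, while $\omega_R$ is generated by the monomials $\xb^\db$ with $\db\in G(\omega_H)$; since $\{\ab_1,\ab_2\}\subsetneq B_H$, Lemma~\ref{lemma:omega} gives $G(\omega_H)=B_H\setminus\{\ab_1,\ab_2\}$. Thus $\mm\omega_R$ is generated by the monomials $\xb^{\cb+\db}$ with $\cb\in B_H$ and $\db\in B_H\setminus\{\ab_1,\ab_2\}$, so $\mm\omega_R\subseteq I$ is equivalent to $\xb^{\cb+\db}\in I$ for all such pairs. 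Because $\db\in\omega_H$ and $\omega_H+H\subseteq\omega_H$, each exponent $\cb+\db$ lies in $\omega_H$, so Lemma~\ref{c-inside-I} applies and converts $\xb^{\cb+\db}\in I$ into the membership $\cb+\db\in(\bb+H)\union(\ab_1+H)\union(\ab_2+H)$.

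At this point the Ulrich condition reads: $\cb+\db\in(\bb+H)\union(\ab_1+H)\union(\ab_2+H)$ for every $\cb\in B_H$ and every $\db\in B_H\setminus\{\ab_1,\ab_2\}$. To match the statement I would compare this with the requirement that $\cb_1+\cb_2$ lie in the same union for all $\cb_1,\cb_2\in B_H$. One implication is immediate, since the pairs $(\cb,\db)$ form a subset of all pairs $(\cb_1,\cb_2)$. For the converse, the only sums $\cb_1+\cb_2$ not already covered are those with both $\cb_1,\cb_2\in\{\ab_1,\ab_2\}$; but $2\ab_1\in\ab_1+H$, $2\ab_2\in\ab_2+H$, and $\ab_1+\ab_2\in\ab_1+H$ hold trivially, so these remaining sums automatically lie in the union. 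This closes the equivalence.

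I expect no serious obstacle here: the substance is entirely contained in Theorem~\ref{criterion} and Lemma~\ref{c-inside-I}, and what remains is structural. The one point demanding care is the bookkeeping of generators, namely correctly identifying $G(\omega_H)$ through Lemma~\ref{lemma:omega} and checking that the pairs absent from the $\mm\omega_R$ description are exactly those with both entries in $\{\ab_1,\ab_2\}$, which are trivially in $(\ab_1+H)\union(\ab_2+H)$.
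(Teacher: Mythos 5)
Your proposal is correct and follows essentially the same route as the paper: both reduce to the containment $\mm\omega_R\subseteq(\xb^{\bb}R,(\xb^{\ab_1}-\xb^{\ab_2})\omega_R)$ from Theorem~\ref{criterion} (noting $J\omega_R=(\xb^{\ab_1}-\xb^{\ab_2})\omega_R$ when $d=2$) and then translate membership in this ideal via Lemma~\ref{c-inside-I}. Your bookkeeping of which pairs of generators actually need checking, and the observation that pairs drawn from $\{\ab_1,\ab_2\}$ are trivially covered, is slightly more explicit than the paper's but matches its argument.
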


\begin{proof}
Let $B_H=\{\ab_1,\ab_2,\cb_1,\ldots,\cb_m\}$. Then $\mm=(\xb^{\ab_1},\xb^{\ab_2}, \xb^{\cb_1},\ldots, \xb^{\cb_m})$ and $\omega_R= (\xb^{\cb_1},\ldots, \xb^{\cb_m})$.

If $\bb$ is an Ulrich element, then $\mm \omega_R\subseteq (\xb^{\bb} R,(\xb^{\ab_1}-\xb^{\ab_2})\omega_R)$,  and therefore $\xb^{\cb_i}\xb^{\cb_j}\in (\xb^{\bb} R,(\xb^{\ab_1}-\xb^{\ab_2})\omega_R)$ for all $i,j$. Thus the desired conclusion follows from Lemma~\ref{c-inside-I}.

Conversely, let $\xb^{\cb}\in \mm\omega_R$. Then $\cb=\cb_i+\cb_j+h$, or $\cb=\ab_i+\cb_j+h$ for some  $h\in H$. In both cases our assumptions imply that $\cb \in (\bb+H) \union (\ab_1+ H) \union (\ab_2+ H)$. Thus $\xb^{\cb} \in (\xb^{\bb} R,(\xb^{\ab_1}-\xb^{\ab_2})\omega_R)$, by Lemma~\ref{c-inside-I}. This shows that $\bb$ is an Ulrich element in $H$.
\end{proof}


\begin{Example}
{\em
	\label{ex:notnice}
	Let $H$ be the semigroup in $\Hc_2$ with the extremal rays $\ab_1=(5,2)$ and $\ab_2= (2,5)$. Then the Hilbert basis of $H$ is
	\[
	B_H=\{ \ab_1, \ab_2, \cb_1=(1,1), \cb_2=(2,1), \cb_3=(1,2)\}.
	\]
	Using  Theorem~\ref{two}, we may check that none of $\cb_1,\cb_2$ or $\cb_3$ is an Ulrich element in $H$. The same conclusion could be reached by using  Proposition~\ref{1} together with Theorem~\ref{ulrich11}.
}
\end{Example}

Here is one immediate application of Theorem~\ref{two}.

\begin{Proposition}\label{allulrich}
Let $H\in \Hc_2$ such that  $\cb+\cb'\notin P_H$ for all $\cb, \cb' \in B_H\cap P_H$.
Then any $\bb \in B_H \cap P_H$ is an Ulrich element in  $H$.
\end{Proposition}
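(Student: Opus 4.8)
The plan is to verify the combinatorial criterion of Theorem~\ref{two} for the given $\bb$. First I would record that, as observed right after Lemma~\ref{lemma:omega}, the extremal rays $\ab_1,\ab_2$ are the only members of $B_H$ lying outside $P_H$; hence $B_H\cap P_H=B_H\setminus\{\ab_1,\ab_2\}$. In particular $\bb\in B_H\setminus\{\ab_1,\ab_2\}$, so Theorem~\ref{two} applies and it is enough to show that $\cb_1+\cb_2\in(\bb+H)\union(\ab_1+H)\union(\ab_2+H)$ for every pair $\cb_1,\cb_2\in B_H$.

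Next I would dispose of the pairs involving an extremal ray. If $\cb_1\in\{\ab_1,\ab_2\}$, say $\cb_1=\ab_1$, then $\cb_1+\cb_2=\ab_1+\cb_2\in\ab_1+H$ because $\cb_2\in H$; the case $\cb_2\in\{\ab_1,\ab_2\}$ is symmetric. So the criterion holds trivially whenever at least one of $\cb_1,\cb_2$ is an extremal ray, and the only pairs left to treat are those with $\cb_1,\cb_2\in B_H\setminus\{\ab_1,\ab_2\}=B_H\cap P_H$.

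For such a pair the hypothesis gives $\cb_1+\cb_2\notin P_H$, and the heart of the argument is to translate this into the desired membership. Using the unique decomposition recalled in Section~\ref{background}, I would write $\cb_1+\cb_2=n_1\ab_1+n_2\ab_2+\hb'$ with $\hb'\in P_H\cap\ZZ^2$ and $n_1,n_2\in\NN$; then $[\cb_1+\cb_2]_i=n_i+[\hb']_i$ with $0\le[\hb']_i<1$, so $\cb_1+\cb_2\in P_H$ if and only if $n_1=n_2=0$. Since $\cb_1+\cb_2\notin P_H$, at least one $n_i$ is positive; if $n_1\ge1$ then $\cb_1+\cb_2=\ab_1+\bigl((n_1-1)\ab_1+n_2\ab_2+\hb'\bigr)\in\ab_1+H$, and symmetrically $\cb_1+\cb_2\in\ab_2+H$ when $n_2\ge1$. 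In either case the criterion of Theorem~\ref{two} is met, so $\bb$ is an Ulrich element in $H$.

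I do not expect a real obstacle here: the entire content is the reading-off of a positive coordinate $n_i$ from $\cb_1+\cb_2\notin P_H$ in the last paragraph. The only point that needs attention is to confirm that the case split in the second paragraph genuinely exhausts all pairs $(\cb_1,\cb_2)\in B_H\times B_H$, so that Theorem~\ref{two} is applied to every pair rather than only to the non-extremal ones.
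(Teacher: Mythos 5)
Your proof is correct and follows the same route as the paper: reduce to the criterion of Theorem~\ref{two} and observe that for $\cb_1,\cb_2\in B_H\cap P_H$ the hypothesis $\cb_1+\cb_2\notin P_H$ forces $\cb_1+\cb_2\in(\ab_1+H)\cup(\ab_2+H)$. You merely spell out the decomposition argument and the extremal-ray cases that the paper leaves implicit.
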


\begin{proof}
By the hypothesis, if $\cb, \cb' \in B_H\cap P_H$ then $\cb+\cb' \in (\ab_1+H)\cup (\ab_2 +H)$. Theorem \ref{two} yields the conclusion.
\end{proof}

	One may check that the semigroup $H$ in Example \ref{ex:nice} satisfies the hypothesis of Proposition~\ref{allulrich}, hence $H$ admits three Ulrich elements.

In the following example there is exactly  one Ulrich element in the Hilbert basis of $H$.

\begin{Example}
{\em 
       \label{bott-Ul}
	For the semigroup $H\in \Hc_2$ with $\ab_1=(11,13)$ and $\ab_2=(3,4)$,  a Normaliz  (\cite{Normaliz}) computation 
shows that  $B_H=\{\ab_1,\ab_2,\cb_1=(4,5),\cb_2=(5,6)\}$. 
	We note that the points $2\cb_2-\cb_1=(6,7)$ and $2\cb_2-\ab_2= (7,8)$ are not in $\omega_H$ since the slope of the 
line through the origin and each of these respective points  is not in the interval $(13/11,4/3)$. Also, clearly, $2\cb_2-\ab_1 =(-1,-1) \notin H$.
Therefore, by Theorem~\ref{two} we get that $\cb_1$ is not an Ulrich element in $H$.

On the other hand, since $2\cb_1=(8,10)=\cb_2+\ab_2$ and $B_H\setminus \{\ab_1, \ab_2\}=\{\cb_1,\cb_2\}$,  by Theorem~\ref{two} we conclude that  $\cb_2$ is an Ulrich element in $H$.
}
\end{Example}

\section{Bottom elements as Ulrich elements in dimension $2$}
\label{sec:bottom}

In the multigraded situation   which we consider  in Definition~\ref{def:ag},  there is in general no distinguished multidegree with $(\omega_{K[H]})_\bb\neq 0$.
 Inspired by Proposition~\ref{1}, we are prompted to test the Ulrich  property for elements in $\omega_H$ with smallest entries. First we present the following lemma.

\begin{Lemma}
	\label{exists}
	 For any $H\in \Hc_2$, the set $\omega_H$  has a unique minimal element with respect to the componentwise partial ordering.
\end{Lemma}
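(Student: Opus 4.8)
The plan is to prove the (slightly stronger) statement that $\omega_H$ has a least element, by showing that $\omega_H$ is closed under the componentwise meet $\bb\wedge\cb=(\min(b_1,c_1),\min(b_2,c_2))$ and then arguing formally. First, $\omega_H\neq\emptyset$: writing elements in the basis $\ab_1,\ab_2$, the point $\ab_1+\ab_2$ has both coordinates equal to $1>0$, so $\ab_1+\ab_2\in\omega_H$. Next, minimal elements exist, because the functional $\zb\mapsto z_1+z_2$ attains a minimum on the nonempty set $\omega_H\subseteq\NN^2$, and any minimizer is $\preceq$-minimal (a strictly smaller element would have a strictly smaller coordinate sum). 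Granting meet-closedness, uniqueness is then immediate: if $\bb,\cb\in\omega_H$ are both minimal, then $\bb\wedge\cb\in\omega_H$ with $\bb\wedge\cb\preceq\bb$ and $\bb\wedge\cb\preceq\cb$, so minimality forces $\bb=\bb\wedge\cb=\cb$; the same comparison shows this element is in fact $\preceq$-below every element of $\omega_H$.

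It remains to establish the key claim, and this is where the real content lies. Write $\ab_1=(x_1,y_1)$ and $\ab_2=(x_2,y_2)$ with $\ab_1$ closer to the $x$-axis, so that $x_1y_2-y_1x_2>0$, and describe the interior of the cone $C$ by the two facet functionals
\[
\relint C=\{\zb=(z_1,z_2)\in\RR^2 : f(\zb)>0 \text{ and } g(\zb)>0\},\qquad f(\zb)=x_1z_2-y_1z_1,\ \ g(\zb)=y_2z_1-x_2z_2,
\]
as one checks from $f(\ab_1)=g(\ab_2)=0$ and $f(\ab_2)=g(\ab_1)=x_1y_2-y_1x_2>0$. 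Hence $\omega_H=\{\zb\in\ZZ^2: f(\zb)>0,\ g(\zb)>0\}$. Take $\bb,\cb\in\omega_H$ and set $\db=\bb\wedge\cb$. If $\bb,\cb$ are comparable then $\db\in\{\bb,\cb\}\subseteq\omega_H$; otherwise, after possibly swapping $\bb$ and $\cb$, we may assume $b_1\le c_1$ and $b_2\ge c_2$, so $\db=(b_1,c_2)$. Using only that $x_i,y_i\ge0$ (which holds because $H\subseteq\NN^2$), we obtain
\[
f(\db)=x_1c_2-y_1b_1=f(\cb)+y_1(c_1-b_1)>0,\qquad g(\db)=y_2b_1-x_2c_2=g(\bb)+x_2(b_2-c_2)>0,
\]
so $\db\in\relint C\cap\ZZ^2=\omega_H$, as claimed.

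The main obstacle is precisely this meet-closedness: the facet functionals $f$ and $g$ are not individually monotone for $\preceq$, so passing to the meet lowers one coordinate in each. The point is that the coordinate which drops in $f$ (resp. $g$) is weighted by $y_1\ge0$ (resp. $x_2\ge0$), so the drop can only increase $f$ (resp. $g$); this nonnegativity is exactly the input $H\subseteq\NN^2$. I expect the routine parts (nonemptiness and the formal deduction of uniqueness) to be immediate, while the sign bookkeeping above is the crux. It is worth noting that the argument is special to $d=2$: in higher dimension the meet of two interior lattice points need not lie in the interior, consistent with the fact, noted in the introduction, that bottom elements need not exist when $d>2$.
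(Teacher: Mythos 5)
Your proof is correct and follows essentially the same strategy as the paper's: both reduce the statement to showing that $\relint C\cap\ZZ^2$ is closed under the componentwise meet $\wedge$, from which the unique minimal element is obtained formally. The only difference is cosmetic — the paper verifies meet-closedness by a slope comparison placing $\ab\wedge\bb$ in the subcone spanned by $\ab$ and $\bb$, while you verify it via the facet functionals and the nonnegativity of the extremal rays' coordinates; both are valid.
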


\begin{proof}
	Let $C$ be the cone of $H$, and let $\ab=(a_1,a_2)$ and $\bb=(b_1,b_2)$ be points  in the relative interior of $C$. We claim that $\ab\wedge \bb=(\min\{a_1,b_1\}, \min\{a_2,b_2\})\in\relint C$. This will imply the existence of  the unique minimal element of $\omega_H$.
	
	For the proof of the claim, it is enough to consider the case when $a_1<b_1$ and $a_2>b_2$.
	Since $a_2/a_1 >b_2/a_1 > b_2/b_1$, it follows that the point in the plane with coordinates  $\ab\wedge \bb=  (a_1, b_2)$ lies inside the cone with vertex the origin and passing through the points with coordinates $\ab$ and $\bb$. Since the latter cone is in $\relint C$ it follows that  $\ab\wedge \bb \in \relint C$.
\end{proof}

 We call the unique minimal element of $\omega_H$ with respect to the componentwise partial ordering, {\em the bottom element} of $H$. 
\begin{Remark}
{\em
 For $H\in \Hc_2$, since the elements in $\omega_H$ have only nonnegative entries, it follows that the bottom element of $H$ is also the smallest element in $G(\omega_H)$ with respect to the componentwise order.  Moreover, if $K[H]$ is not a regular ring then the bottom element of $H$ is componentwise the smallest element in $B_H\setminus \{\ab_1,\ab_2\}$, see Lemma~\ref{lemma:omega}.
}
\end{Remark}

In arbitrary embedding dimension  we give the following definition.

\begin{Definition}
For $H\in \Hc_d$, an  element  $\bb\in \omega_H$ is called the {\em bottom element} of $H$ if $\cb-\bb \in \NN^d$ for all $\cb \in \omega_H$.
\end{Definition}

\begin{Remark}\label{rem:nobottom}
{\em	In general, a semigroup $H\in \Hc_d$ with  $d>2$ may  not have  a unique minimal element in $\omega_H$ with respect to the componentwise partial ordering  $\preceq$.
For instance, let $d=3$, $\ab_1=(5,3,1), \ab_2=(1,5,2),\ab_3=(8,3,5)$. Then, a calculation with Normaliz (\cite{Normaliz}) shows that
	\begin{eqnarray*}
		B_H&=&\{\ab_1,\ab_2,\ab_3, (1,2,1), (2,1,1), (2,2,1),  (2,5,2), (3,2,1), (3,2,2),\\
		&& (3,5,2), (3,5,3), (4,5,2), (5,2,3), (5,5,2), (5,5,4), (7,5,5)\}.
	\end{eqnarray*}
One can check that the vectors  $\nb_1=(19,11,-37), \nb_2=(-12, 17,9), \nb_3=(1,-9,22)$ are normal to the planes generated by $\ab_2$ and $\ab_3$, by $\ab_1$ and $\ab_3$, by $\ab_1$ and $\ab_2$  respectively. Also, that no element in $B_H\setminus \{\ab_1,\ab_2,\ab_3\}$ lies on any of the three planes just mentioned. Consequently, there are no inner lattice points on the faces of $\overline{P_H}$ and $G(\omega_H)= B_H\setminus \{\ab_1,\ab_2,\ab_3\}$.
	It follows that $\bb_1=(1,2,1)$ and $\bb_2=(2,1,1)$ are both minimal elements in $\omega_H$ 
 with respect to $\preceq$.
}
\end{Remark}

Using Theorem~\ref{two} we show that sometimes the bottom element may be the only Ulrich element in $B_H$.
 
\begin{Proposition}\label{twoinp}
Let $\bb$ be the bottom element of $H\in \Hc_2$. If $2\bb \in P_H$, then $\bb$ is the only possible Ulrich element in $B_H$.
\end{Proposition}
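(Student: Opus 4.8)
The plan is to show that every element of $B_H$ other than $\bb$ fails the Ulrich criterion of Theorem~\ref{two}, using the single test product $2\bb$. First I would record that any Ulrich element lies in $\omega_H$ by Definition~\ref{def:ag}, and since the extremal rays $\ab_1,\ab_2$ are not in $\omega_H$, Lemma~\ref{lemma:omega} shows the only candidates for Ulrich elements in $B_H$ are the elements of $B_H\cap\omega_H=B_H\setminus\{\ab_1,\ab_2\}$. Thus it suffices to fix an arbitrary $\cb\in B_H\setminus\{\ab_1,\ab_2\}$ with $\cb\neq\bb$ and prove it is not an Ulrich element.

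By Theorem~\ref{two} it is enough to exhibit a pair in $B_H$ whose sum lies outside $(\cb+H)\cup(\ab_1+H)\cup(\ab_2+H)$. I would take the pair $(\bb,\bb)$ and show $2\bb$ is the required witness. That $2\bb\notin(\ab_1+H)\cup(\ab_2+H)$ is immediate from the hypothesis $2\bb\in P_H$: membership in $\ab_i+H$ would force $[2\bb]_i\geq[\ab_i]_i=1$, contradicting $[2\bb]_i<1$.

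The heart of the argument is ruling out $2\bb\in\cb+H$. Assuming $2\bb=\cb+\hb$ with $\hb\in H$, I would first note $\hb\neq 0$: indeed $2\bb=\bb+\bb$ is a sum of two nonzero elements of $H$, hence $2\bb\notin B_H$, while $\cb\in B_H$. Since $\bb$ is the bottom element, $\cb\succeq\bb$ componentwise, so I can write $\cb=\bb+\db$ with $\db\in\NN^2\setminus\{0\}$ (using $\cb\neq\bb$); the identity $2\bb=\cb+\hb$ then yields $\hb=\bb-\db$, so $\hb\preceq\bb$ and $\hb\neq\bb$ in the componentwise order.

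I expect the only genuine obstacle to be showing $\hb\in\omega_H$, i.e. that $\hb$ lies in the \emph{relative interior} of the cone rather than on a bounding ray; this is exactly where $2\bb\in P_H$ enters a second time. If $[\hb]_{i_0}=0$ for some index, then $\hb$ equals a positive integer multiple $m\ab_j$ of the other extremal ray with $m\geq 1$, whence $[2\bb]_j=[\cb]_j+m\geq 1$, contradicting $2\bb\in P_H$. Therefore $[\hb]_i>0$ for both $i$, so $\hb\in\omega_H$, and the bottom-element property forces $\hb\succeq\bb$; combined with $\hb\preceq\bb$ and $\hb\neq\bb$ this is a contradiction. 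Hence $2\bb\notin\cb+H$, so $2\bb$ witnesses the failure of the criterion in Theorem~\ref{two} and $\cb$ is not an Ulrich element. As $\cb$ was arbitrary, $\bb$ is the only possible Ulrich element in $B_H$.
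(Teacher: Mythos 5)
Your proof is correct and takes essentially the same route as the paper: both apply Theorem~\ref{two} with the test sum $2\bb$, use the hypothesis $2\bb\in P_H$ to exclude membership in $(\ab_1+H)\cup(\ab_2+H)$, and then obtain a contradiction from the componentwise minimality of the bottom element. You merely organize it contrapositively and spell out two details the paper leaves implicit (that the remainder $\hb$ in $2\bb=\cb+\hb$ is nonzero and actually lies in $\omega_H$), which is a welcome bit of extra care rather than a different argument.
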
 

\begin{proof}
Assume $\bb' \in B_H $ is an Ulrich element in $H$.
Then  $2\bb \in (\ab_1+H)\cup (\ab_2+H)\cup (\bb'+H)$, by Theorem \ref{two}.
	Since $2\bb \in P_H$, we get that $2\bb\in \bb'+H$, hence  $2\bb=\bb'+\hb$ for some $\hb \in P_H$.
	Moreover, comparing componentwise, $\bb\preceq \bb'$ and $\bb \preceq \hb$ since $\bb$ is the bottom element for $H$, thus $\bb'=\bb$.
\end{proof}

\begin{Remark}
{\em	In general, as Example~\ref{bott-Ul} shows,  even when the Hilbert basis of $H$ contains a unique Ulrich element, the latter need not  be  the bottom element. 
}
\end{Remark}

\medskip
 
In the following, we discuss when the bottom element $\bb$ of $H\in\Hc_2$ is  Ulrich.

\begin{Notation} 
\label{not}
{\em To avoid repetitions, in the rest of the section $H \in \Hc_2$ has the extremal rays
$\ab_1=(x_1,y_1)$ and $\ab_2=(x_2,y_2)$ such that ($y_1/x_1<y_2/x_2$ when $x_1, x_2 >0$) or  $x_2=0$. 

We define $H_1$ and $H_2$ to be the semigroups in $\Hc_2$ with the extremal rays $\ab_1$ and $\bb$, respectively $\ab_2$ and $\bb$.
 We   denote $\ZZ^2\cap\relint P_{H_i}$ by $H_i^*$, for $i=1,2$.}
\end{Notation}

\medskip
By an easy argument, the following proposition  presents a  class of semigroups in $\Hc_2$ with Ulrich  bottom element.
\begin{Proposition}\label{bb-ulrich}
	Let $\bb$ be the bottom element of $H$.  If $(x_2,y_1)\preceq\bb$, then  $\bb$ is an Ulrich element in $H$.
\end{Proposition}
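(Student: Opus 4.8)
The plan is to verify the combinatorial criterion of Theorem~\ref{two}. First, if $K[H]$ is regular then $B_H=\{\ab_1,\ab_2\}$, the ring is Gorenstein with $G(\omega_H)=\{\ab_1+\ab_2\}$, and the bottom element $\bb=\ab_1+\ab_2$ is Ulrich by Example~\ref{gore-regular}(a); so I may assume $K[H]$ is not regular, in which case $\bb\in B_H\setminus\{\ab_1,\ab_2\}$ by the remark following Lemma~\ref{exists}, and Theorem~\ref{two} applies. I must then show $\cb_1+\cb_2\in(\bb+H)\cup(\ab_1+H)\cup(\ab_2+H)$ for all $\cb_1,\cb_2\in B_H$. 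Every pair in which $\cb_1$ or $\cb_2$ equals $\ab_1$ or $\ab_2$ is immediate, since then the sum lies in $\ab_1+H$ or $\ab_2+H$. Hence the only case requiring work is $\cb_1,\cb_2\in B_H\cap\omega_H\subseteq\omega_H$; writing $P=\cb_1+\cb_2=(p,q)$ and $\bb=(u,v)$, I record that $P\in\omega_H$ and, since $\bb$ is the bottom element, $\cb_1,\cb_2\succeq\bb$, so $P\succeq 2\bb$, i.e. $p\geq 2u$ and $q\geq 2v$.

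Next I would pass to the facet description. With $\nb_1=(-y_1,x_1)$ and $\nb_2=(y_2,-x_2)$ one has $C=\{\zb:\langle\zb,\nb_1\rangle\geq0,\ \langle\zb,\nb_2\rangle\geq0\}$, $\langle\ab_i,\nb_i\rangle=0$, and the key values $\langle\ab_1,\nb_2\rangle=\langle\ab_2,\nb_1\rangle=D$ with $D=x_1y_2-x_2y_1>0$ (note $x_1>0$ and $y_2>0$, as $\ab_1$ is not vertical and $\ab_2$ is not horizontal). Translating membership of the lattice point $P$ gives $P\in\ab_1+H\iff\langle P,\nb_2\rangle\geq D$, $P\in\ab_2+H\iff\langle P,\nb_1\rangle\geq D$, and $P\in\bb+H\iff\langle P,\nb_1\rangle\geq\langle\bb,\nb_1\rangle$ and $\langle P,\nb_2\rangle\geq\langle\bb,\nb_2\rangle$. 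I would argue by contraposition: assuming $P\notin\ab_1+H$ and $P\notin\ab_2+H$, that is $\langle P,\nb_2\rangle<D$ and $\langle P,\nb_1\rangle<D$, I aim to deduce $P\in\bb+H$.

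The crux is a cancellation. Multiplying $\langle P,\nb_2\rangle=y_2p-x_2q$ by $x_1>0$ and feeding in $\langle P,\nb_1\rangle=x_1q-y_1p<D$ gives, after the terms in $D$ collapse, $x_1\langle P,\nb_2\rangle>(p-x_2)D$. Since $p\geq 2u$ and the hypothesis gives $x_2\leq u$, one has $p-x_2\geq u$, and a direct check yields $uD/x_1-\langle\bb,\nb_2\rangle=(x_2/x_1)\langle\bb,\nb_1\rangle\geq0$; hence $\langle P,\nb_2\rangle>uD/x_1\geq\langle\bb,\nb_2\rangle$. The mirror computation, multiplying $\langle P,\nb_1\rangle$ by $y_2>0$ and using $\langle P,\nb_2\rangle<D$, yields $y_2\langle P,\nb_1\rangle>(q-y_1)D$; now $q\geq 2v$ and $y_1\leq v$ give $q-y_1\geq v$, and $vD/y_2-\langle\bb,\nb_1\rangle=(y_1/y_2)\langle\bb,\nb_2\rangle\geq0$, so $\langle P,\nb_1\rangle\geq\langle\bb,\nb_1\rangle$. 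Thus both defining inequalities of $\bb+H$ hold and $P\in\bb+H$, closing the only nontrivial case. The main obstacle I anticipate is locating this cancellation and matching the two halves of the asymmetric hypothesis $(x_2,y_1)\preceq\bb$ to the two facets: the bound $x_2\leq u$ controls the $\nb_2$-inequality and $y_1\leq v$ the $\nb_1$-inequality, while the positivity $\langle\bb,\nb_1\rangle,\langle\bb,\nb_2\rangle>0$ (from $\bb\in\relint C$) finishes each estimate.
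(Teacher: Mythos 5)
Your proof is correct, but it takes a genuinely different route from the paper's. The paper also reduces via Theorem~\ref{two} to the case $\cb_1,\cb_2\in B_H\setminus\{\ab_1,\ab_2\}$, but then splits the cone along the ray through $\bb$ into the two subcones $H_1$ (spanned by $\ab_1,\bb$) and $H_2$ (spanned by $\ab_2,\bb$): writing $\cb_1+\cb_2=(c,d)=r_1\ab_1+r_2\bb$ with $r_1,r_2\geq 0$ in the first case, the inequality $d\geq 2v\geq y_1+v$ forces $r_1\geq 1$ or $r_2\geq 1$, hence $(c,d)\in(\ab_1+H)\cup(\bb+H)$, and symmetrically for $H_2$ using $c\geq 2u\geq x_2+u$. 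That argument is three lines once one accepts $H=H_1\cup H_2$; your version instead works directly with the support forms $\nb_1,\nb_2$ of the big cone and extracts the same dichotomy through the determinant identity $x_1\langle P,\nb_2\rangle-(p-x_2)D=x_2\bigl(D-\langle P,\nb_1\rangle\bigr)$ and its mirror. What your approach buys is that it never introduces the auxiliary semigroups $H_1,H_2$ and makes completely explicit how each half of the hypothesis $(x_2,y_1)\preceq\bb$ controls one facet inequality; what it costs is the longer computation. One cosmetic remark: the strict inequalities you assert (e.g.\ $x_1\langle P,\nb_2\rangle>(p-x_2)D$) degenerate to equalities when $x_2=0$ (resp.\ $y_1=0$), but since only the weak inequalities $\langle P,\nb_i\rangle\geq\langle\bb,\nb_i\rangle$ are needed to conclude $P-\bb\in C\cap\ZZ^2=H$, this is harmless.
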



\begin{proof}
If $K[H]$ is a regular ring, then $\bb$ is an Ulrich element in $H$, since $G(\omega_H)=\{\bb\}$.

Assume $K[H]$ is not a regular ring. Then $\bb\in G(\omega_H)= B_H\setminus \{\ab_1, \ab_2\}$, by Lemma~\ref{lemma:omega}(d).
Let $\cb_1, \cb_2 \in B_H\setminus \{\ab_1,\ab_2\}$, and $\cb_1+\cb_2=(c,d), \bb=(u,v)$.
 
If $(c,d)\in H_1$, then $(c,d)=r_1(x_1,y_1)+r_2(u,v)$ for some $r_1,r_2\in\RR_{\geq0}$. Since $d\geq 2v\geq y_1+v$, we have $r_1\geq 1$ or $r_2\geq 1$. Consequently, 		
	$(c,d)\in (\bb+H_1)\cup(\ab_1+H_1)\subset(\bb+H)\cup(\ab_1+H)$.
	
	A similar argument shows that if $(c,d)\in H_2$, then $(c,d)\in(\bb+H)\cup(\ab_2+H)$.
	The conclusion follows by 	Theorem~\ref{two}.
\end{proof}

\begin{Example}	\label{allalmost}{\em
      Let $H$ be the semigroup   with extremal rays $\ab_1=(a,1)$ and $\ab_2= (1,b)$, where $a,b\geq 2$.
Since $1/a<1<b$ we get that $\bb=(1,1)$ is in $\omega_H$ and it  is the bottom element in $H$.  Then Proposition \ref{bb-ulrich} implies that $\bb$ is   an Ulrich element in $H$.
}
\end{Example}

Clearly, $H=H_1\union H_2$ and $H_1\sect H_2=\NN \bb$. 
The following lemma states some nice properties regarding $H_1$ and $H_2$.

\begin{Lemma}\label{H12b}
 Let $\bb$ be the bottom element of $H$. Then
 \begin{enumerate}
 	\item[(a)] $\pb+\qb\in\bb+H$ for all $\pb\in H_1\setminus\{0\}$ and $\qb\in H_2\setminus \{0\}$.
 	\item[(b)] $(\bb+H)\union(\ab_1+H)\union(\ab_2+H)=H\setminus(H_1^*\cup H_2^*)$.
 \end{enumerate}
\end{Lemma}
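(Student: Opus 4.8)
The plan is to prove both parts using the geometric description of membership encoded by the coordinates $[\cdot]_i$ and the fundamental parallelogram, together with the bottom element property of $\bb$.

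For part (a), I would argue directly using the support hyperplanes of the two subcones. Let $\nb_1$ be a normal vector to the facet of the cone $C$ containing $\ab_1$ (so $\langle \ab_1, \nb_1 \rangle = 0$ and $\langle \ab_2, \nb_1 \rangle > 0$), and $\nb_2$ the normal to the facet containing $\ab_2$. Writing $\pb \in H_1$ and $\qb \in H_2$, the point $\pb$ lies in the subcone generated by $\ab_1$ and $\bb$, while $\qb$ lies in the subcone generated by $\ab_2$ and $\bb$. I want to show $\pb + \qb - \bb \in H$, i.e. that both $[\pb+\qb-\bb]_1 \geq 0$ and $[\pb+\qb-\bb]_2 \geq 0$ relative to the extremal rays $\ab_1, \ab_2$ of $H$. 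The key observation is that $\bb$ sits inside $\relint C$, so $[\bb]_1, [\bb]_2 > 0$, and since $\pb \in H_1 \setminus \{0\}$ and $\qb \in H_2 \setminus \{0\}$, each contributes strictly toward overcoming the single subtraction of $\bb$. I expect the cleanest route is to decompose $\pb = s_1 \ab_1 + t_1 \bb$ and $\qb = s_2 \ab_2 + t_2 \bb$ with nonnegative real coefficients (possible since these are the extremal rays of $H_1$ and $H_2$), and then analyze $\pb + \qb - \bb = s_1 \ab_1 + s_2 \ab_2 + (t_1 + t_2 - 1)\bb$. When $t_1 + t_2 \geq 1$ this is immediate; the remaining case $t_1 + t_2 < 1$ with both $\pb, \qb \neq 0$ forces $s_1, s_2$ to be large enough, and here I would invoke Lemma~\ref{b-in-omega}(b) applied to $\bb$ (recall $\bb \in \overline{P_H}$ is the bottom element) to conclude that $\ab_1 + \ab_2 - \bb \in H$, which handles the boundary situation.

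For part (b), I would prove the two inclusions separately. The inclusion $(\bb+H)\cup(\ab_1+H)\cup(\ab_2+H) \subseteq H \setminus (H_1^* \cup H_2^*)$ amounts to showing that no point of the form $\bb + \hb$, $\ab_1 + \hb$, or $\ab_2 + \hb$ (with $\hb \in H$) lies in the relative interior of $P_{H_1}$ or $P_{H_2}$. Since $H_i^* = \ZZ^2 \cap \relint P_{H_i}$ consists of points strictly inside the half-open parallelogram spanned by $\bb$ and $\ab_i$, any point in $\bb + H$ has its $\bb$-coordinate (in the $H_i$ basis) at least $1$, pushing it out of $\relint P_{H_i}$; similarly $\ab_i + H$ is excluded from $\relint P_{H_i}$ by the $\ab_i$-coordinate, and from $\relint P_{H_j}$ ($j\neq i$) because $\ab_i$ is an extremal ray of $H$ lying outside the relevant subcone's parallelogram. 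For the reverse inclusion, given $\cb \in H$ with $\cb \notin H_1^* \cup H_2^*$, I would use $H = H_1 \cup H_2$ to place $\cb$ in $H_1$ (say) and then decompose $\cb = \sum n_i \ab_i' + \cb'$ where $\cb' \in \overline{P_{H_1}} \cap \ZZ^2$; the condition $\cb \notin H_1^*$ means $\cb'$ lies on the boundary of $P_{H_1}$, which forces $\cb$ into $\bb + H$ or $\ab_1 + H$.

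The main obstacle I anticipate is part (a) in the degenerate regime where $\pb$ and $\qb$ are ``small'' (close to the boundary rays and with small $\bb$-components), since there the naive coefficient bookkeeping does not immediately yield $t_1 + t_2 \geq 1$. The resolution should hinge on the fact that $\bb$ is the \emph{bottom} element, so any nonzero element of $H_1$ or $H_2$ that is itself in $\omega_H$ already dominates $\bb$ componentwise, and the only nonzero elements of $H_i$ failing to be in $\omega_H$ are multiples of the extremal rays $\ab_i$ or $\bb$ themselves; a short case check on these generators, combined with $\ab_1 + \ab_2 - \bb \in H$ from Lemma~\ref{b-in-omega}, should close the argument. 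Part (b) I expect to be more routine once the parallelogram boundary analysis is set up carefully, though one must be attentive to the half-open convention in the definition of $P_H$.
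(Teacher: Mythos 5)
Your part (b) follows essentially the paper's route (coordinate bookkeeping in the bases $(\ab_i,\bb)$ of the two subcones, plus the unique decomposition of elements of $H_i$ modulo $P_{H_i}$) and is workable, so let me focus on part (a), where there is a genuine gap. Your case split on $t_1+t_2$ leaves the hard case $t_1+t_2<1$ unresolved: what must be shown there is $s_i\geq (1-t_1-t_2)\alpha_i$, where $\bb=\alpha_1\ab_1+\alpha_2\ab_2$, and the claim that nonzero $\pb,\qb$ ``force $s_1,s_2$ to be large enough'' is precisely the assertion to be proved, not a consequence of anything you have written. The case where all of $s_1,t_1,s_2,t_2$ lie in $(0,1)$ really occurs: for $\ab_1=(5,2)$, $\ab_2=(2,5)$, $\bb=(1,1)$, $\pb=(2,1)\in H_1^*$, $\qb=(1,2)\in H_2^*$ one gets $s_1=t_1=s_2=t_2=1/3$. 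The fact $\ab_1+\ab_2-\bb\in H$ only disposes of the subcase $s_1,s_2\geq 1$. Your proposed resolution via componentwise domination does not close the remaining case either: $\pb\succeq\bb$ in $\NN^2$ does \emph{not} imply $\pb-\bb\in H$ (for $\ab_1=(11,13)$, $\ab_2=(3,4)$, $\bb=(4,5)$ one has $(5,6)\in\omega_H$ yet $(5,6)-(4,5)=(1,1)\notin C$), so it gives no control over whether $\pb+\qb-\bb$ lies in the cone $C$. (A minor point: positive multiples of $\bb$ do lie in $\omega_H$, so the nonzero elements of $H_i$ outside $\omega_H$ are exactly the positive multiples of $\ab_i$.)

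The missing idea is to decompose $\bb$ in terms of $\pb$ and $\qb$ rather than the other way around. After discarding the easy case where $\pb-\bb\in H$ or $\qb-\bb\in H$, the ray of $\bb$ separates the subcones containing $\pb$ and $\qb$, so $\bb=r_1\pb+r_2\qb$ with $r_1,r_2>0$. If $r_1>1$, then $\bb-\pb=(r_1-1)\pb+r_2\qb$ lies in $\relint C\cap\ZZ^2=\omega_H$ and is componentwise below $\bb$ and distinct from it, contradicting that $\bb$ is the bottom element; hence $r_1,r_2\leq 1$ and $\pb+\qb-\bb=(1-r_1)\pb+(1-r_2)\qb\in C\cap\ZZ^2=H$. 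This is where the bottom hypothesis actually enters, and without this step (or an equivalent substitute) your outline does not yield part (a).
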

\begin{proof}
(a). If $\pb-\bb\in H$ or $\qb-\bb\in H$, then clearly $\pb+\qb \in \bb +H$. Let us assume that $\pb-\bb\notin H$ and $\qb-\bb\notin H$.
Let $C'$ be the cone generated by the extremal rays $\pb,\qb$. Since
$\bb\in C'$,  $\bb=r_1\pb+r_2\qb$ for some $r_1,r_2\in\RR_{>0}$.
 If $r_1>1$, then $\bb-\pb=(r_1-1)\pb+r_2\qb$, hence $\bb-\pb\in C'\sect \omega_H$, a contradiction with $\bb$ the bottom element in $H$.
Therefore, $r_1\leq 1$, and also $r_2\leq 1$ by a similar argument.
 Now, $\pb+\qb-\bb=(1-r_1)\pb+(1-r_2)\qb \in C'\sect \ZZ^2 \subset H$.

(b).
Note that for any $\pb\in H$ we have
\begin{eqnarray*}
	\pb\in H_1\setminus H_1^* \Leftrightarrow \pb\in (\bb+H_1)\union (\ab_1+H_1),\\
	\pb\in H_2\setminus H_2^* \Leftrightarrow \pb\in (\bb+H_2)\union (\ab_2+H_2).
\end{eqnarray*}
Therefore, $H\setminus(H_1^*\cup H_2^*)  = (H_1\union H_2)\setminus(H_1^*\cup H_2^*) \subseteq (\bb+H)\union(\ab_1+H)\union(\ab_2+H)$. 

In order to check the reverse inclusion, let $\pb\in (\bb+H)\union(\ab_1+H)\union(\ab_2+H)$. 

We first consider the case $\pb \in H_1$. Then clearly, $\pb\notin H_2^*$. 
If we assume, on the contrary, that $\pb \in H_1^*$, then $\pb= r_1 \ab_1+r_2 \bb$ with  $r_1, r_2 \in (0,1)$. We decompose $\bb= \alpha_1\ab_1+\alpha_2\ab_2$ with $\alpha_1, \alpha_2 \in (0,1]$. This gives $\pb=(r_1+r_2\alpha_1)\ab_1 +r_2\alpha_2 \ab_2$. Since  $r_2\alpha_2 <1$ and $r_2 \alpha_2<\alpha_2$ we infer that $\pb \notin (\ab_2+H)\union (\bb+H)$. Thus $\pb \in \ab_1+H$ and $r_1 \geq 1$, a contradiction.
Consequently, $\pb\notin H_1^*\union H_2^*$.
 
A similar argument works for the case $\pb\in H_2$.
\end{proof}

\begin{Lemma}
\label{innerbounds}
 Let $\pb=(k,r)\in H_1^*$ and $\qb=(\ell,s) \in H_2^*$.
If $\bb=(u,v)$ is the bottom element of $H$, then
\begin{enumerate}
\item[(a)] $u<k<x_1$ and $v\leq r \leq y_1$.
\item[(b)]  $u\leq \ell \leq x_2$ and $v<s<y_2$.
\end{enumerate}
\end{Lemma}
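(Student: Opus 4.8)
The plan is to tie the auxiliary parallelograms $P_{H_1},P_{H_2}$ back to the cone $C$ and then exploit that $\bb$ is the componentwise minimum of $\omega_H$. First I would record two preliminary facts. Since $\bb\in\relint C$, it has positive coordinates, and it is \emph{primitive}: if $\bb=g\bb_0$ with an integer $g\geq 2$ and $\bb_0\in\ZZ^2$, then $\bb_0$ lies on the ray through $\bb$, so $\bb_0\in\relint C\cap\ZZ^2=\omega_H$ and $\bb_0\preceq\bb$ with $\bb_0\neq\bb$, contradicting minimality. Hence $\ab_1,\bb$ (resp.\ $\ab_2,\bb$) are genuinely the primitive generators spanning $P_{H_1}$ (resp.\ $P_{H_2}$). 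Writing $\bb=\alpha_1\ab_1+\alpha_2\ab_2$ with $\alpha_1,\alpha_2>0$, any $\lambda_1\ab_1+\lambda_2\bb$ with $\lambda_1,\lambda_2>0$ equals $(\lambda_1+\lambda_2\alpha_1)\ab_1+\lambda_2\alpha_2\ab_2$, so it lies in $\relint C$. Thus $\relint P_{H_i}\subseteq\relint C$, and intersecting with $\ZZ^2$ gives $H_i^*\subseteq\omega_H$ for $i=1,2$.

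Granting this, the four bounds in each part combine the bottom-element property with the central symmetry of the parallelogram. I would prove (a) and note that (b) is entirely analogous. For $\pb=(k,r)\in H_1^*\subseteq\omega_H$, minimality of $\bb$ gives $\bb\preceq\pb$, i.e.\ $u\leq k$ and $v\leq r$. The reflection $\pb'=\ab_1+\bb-\pb=(x_1+u-k,\,y_1+v-r)$ again lies in $\relint P_{H_1}\cap\ZZ^2=H_1^*$, since its coefficients $1-\lambda_1,1-\lambda_2$ are again in $(0,1)$; applying $\bb\preceq\pb'$ yields $k\leq x_1$ and $r\leq y_1$. This already secures the non-strict bounds $u\leq k\leq x_1$ and $v\leq r\leq y_1$.

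To promote the $x$-coordinate bounds to strict inequalities I would use the ordering of slopes. As $\bb\in\relint C$ forces $y_1/x_1<v/u$, and $\pb$ lies strictly inside the cone spanned by $\ab_1$ and $\bb$ (with $x_1,u,k>0$), one has $y_1/x_1<r/k<v/u$. If $k=u$ then $r/k=r/u<v/u$ forces $r<v$, contradicting $v\leq r$, so $u<k$; if $k=x_1$ then $y_1/x_1<r/x_1$ forces $y_1<r$, contradicting $r\leq y_1$, so $k<x_1$. For (b) the same scheme applies with $\bb$ now the \emph{smaller}-slope generator of $H_2$ (since $v/u<y_2/x_2$): the chain $v/u<s/\ell<y_2/x_2$ turns $v\leq s\leq y_2$ into $v<s<y_2$, while $u\leq\ell\leq x_2$ is read off directly from $\bb\preceq\qb$ and $\bb\preceq\ab_2+\bb-\qb$.

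The step I expect to be the crux is the inclusion $H_i^*\subseteq\omega_H$, since it is exactly what licenses invoking the bottom element on the auxiliary parallelograms; the strictness promotions then hinge only on the slope ordering $y_1/x_1<v/u<y_2/x_2$ supplied by $\bb\in\relint C$. Finally, the degenerate positions $y_1=0$ or $x_2=0$ need no separate analysis: there the corresponding upper bound reads $r\leq 0$ or $\ell\leq 0$, incompatible with $r,\ell>0$, so $H_1^*$ (resp.\ $H_2^*$) is empty and the assertion holds vacuously.
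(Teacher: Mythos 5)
Your proof is correct and follows essentially the same route as the paper's: componentwise minimality of the bottom element gives the lower bounds, the point reflection $\ab_1+\bb-\pb\in H_1^*$ (the paper's Lemma~\ref{b-in-omega} applied inside $H_1$) gives the upper bounds, and a slope comparison upgrades the relevant inequalities to strict ones. Your explicit verification that $H_i^*\subseteq\omega_H$ (which licenses invoking the bottom element) is a point the paper uses without comment, and is a welcome addition.
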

 
\begin{proof}
We only show (a), part (b) is proved similarly. Clearly, $\bb\neq \pb \in \omega_H$, thus $0<u\leq k$ and $0<v\leq r$.
If $u=k$, then since $\pb\neq \bb$, we have  $v<r$. Then $r/k >v/u>y_1/x_1$, which gives that $\pb\notin H_1$, which is false. Thus $u<k$.

On the other hand, by Lemma~\ref{b-in-omega} applied in $H_1\in \Hc_2$  for $\pb$, the point 
$$
(u,v)+(x_1,y_1)-(k,r) = (u+x_1-k, v+y_1-r) \in H_1^*,
$$
hence $u<u+x_1-k$ and $v\leq v+y_1-r$. That gives $k<x_1$ and $r\leq y_1$.
\end{proof}

The following result restricts the verification of the bottom element being Ulrich to verifying that the sum of any two points in $H_i^*$ is not in $H_i^*$, for $i=1,2$.

\begin{Lemma}
\label{AG2}
Assume $\bb$ is  the bottom element of $H$. The following conditions are equivalent:
 \begin{enumerate}
	\item[(a)] $\bb$ is an Ulrich element in $H$.
	\item[(b)] For $i=1,2$, if $\pb, \qb \in H_i^*$ then $\pb+\qb \notin H_i^*$.
 \end{enumerate}
 \end{Lemma}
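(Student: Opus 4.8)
The plan is to read off ``$\bb$ is Ulrich'' from Theorem~\ref{two} and then translate it through Lemma~\ref{H12b}(b). If $K[H]$ is regular the bottom element is $\bb=\ab_1+\ab_2$ and $\omega_R=\xb^\bb R$, so (a) holds automatically; since $\ab_1,\bb$ and $\ab_2,\bb$ are then again $\ZZ$-bases of $\ZZ^2$, the parallelotopes $P_{H_1},P_{H_2}$ have no interior lattice points, $H_1^*=H_2^*=\emptyset$, and (b) is vacuous. So I may assume $K[H]$ is not regular; then $\bb\in G(\omega_H)=B_H\setminus\{\ab_1,\ab_2\}$ by Lemma~\ref{lemma:omega} and Theorem~\ref{two} applies. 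Using Lemma~\ref{H12b}(b), ``$\bb$ is Ulrich'' becomes the condition (A): $\cb_1+\cb_2\notin H_1^*\cup H_2^*$ for all $\cb_1,\cb_2\in B_H$. I would first record that any $\cb\in B_H\cap H_1$ with $\cb\neq\ab_1,\bb$ lies in $H_1^*$: writing $\cb=\lambda\ab_1+\mu\bb$, non-decomposability in $B_H$ forces $\lambda,\mu<1$, while primitivity of $\ab_1$ and the bottom property of $\bb$ exclude $\lambda=0$ and $\mu=0$. Hence $D_1:=B_H\cap H_1^*$ is exactly the set of non-corner generators lying in $H_1$, similarly $D_2$, and $G(\omega_H)=\{\bb\}\cup D_1\cup D_2$.

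For (b)$\Rightarrow$(a) I would verify (A) by cases on $\cb_1,\cb_2\in B_H$. If some $\cb_i\in\{\ab_1,\ab_2,\bb\}$, then $\cb_1+\cb_2\in(\bb+H)\cup(\ab_1+H)\cup(\ab_2+H)=H\setminus(H_1^*\cup H_2^*)$ by Lemma~\ref{H12b}(b). If $\cb_1\in D_1$ and $\cb_2\in D_2$, then $\cb_1+\cb_2\in\bb+H$ by Lemma~\ref{H12b}(a), again in the complement. The only remaining pairs have both members in a single $D_i\subseteq H_i^*$, and there hypothesis (b) gives $\cb_1+\cb_2\notin H_i^*$, while membership in the subcone excludes $H_j^*$ for $j\neq i$. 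Thus (A) holds and $\bb$ is Ulrich.

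For the converse (a)$\Rightarrow$(b) I would argue by contraposition: given $\pb,\qb\in H_1^*$ with $\pb+\qb\in H_1^*$, I would manufacture $\cb,\cb'\in B_H$ with $\cb+\cb'\in H_1^*$, contradicting (A). The engine is a reduction to generators: since $H_1^*\subseteq\relint C\cap\ZZ^2=\omega_H=G(\omega_H)+H$, each such $\pb$ is $\cb+\hb$ with $\cb\in G(\omega_H)$ and $\hb\in H$; the choices $\cb=\bb$ and $\cb\in D_2$ would both force $\pb\in\bb+H$ (via Lemma~\ref{H12b}), impossible for $\pb\in H_1^*$, so $\cb\in D_1$ and, by the same dichotomy, $\hb\in H_1$. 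Working in the basis $\{\ab_1,\bb\}$, a point of $H_1^*$ has both coordinates in $(0,1)$; subtracting $\hb\in H_1$ (nonnegative coordinates) from $\pb+\qb\in H_1^*$ keeps both coordinates in $(0,1)$, so $\cb+\qb\in H_1^*$. Replacing $\pb$ by $\cb$ and then $\qb$ by an analogous $\cb'\in D_1$ yields $\cb+\cb'\in H_1^*$ with $\cb,\cb'\in B_H$, the desired contradiction; the case $i=2$ is symmetric.

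I expect this last reduction to be the main obstacle. The criterion (A) only constrains sums of Hilbert basis elements, whereas a failure of (b) is witnessed by arbitrary lattice points of $H_1^*$, which need not lie in $B_H$. Overcoming this requires the two observations above, namely that each point of $H_1^*$ is a generator in $D_1$ plus a remainder that stays inside the subcone $H_1$, and that subtracting such a remainder preserves membership in $\relint P_{H_1}$ by coordinate monotonicity in the $\{\ab_1,\bb\}$-basis. These are precisely what upgrade the a priori weaker statement (b) to the full criterion (A).
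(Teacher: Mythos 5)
Your proof is correct, and its overall skeleton (regular case, then Theorem~\ref{two} combined with Lemma~\ref{H12b}) matches the paper's; the genuine difference lies in how you bridge the gap between the Hilbert-basis criterion of Theorem~\ref{two} and the arbitrary lattice points of $H_i^*$ appearing in (b). The paper dissolves this gap in one line: since $(\bb+H)\cup(\ab_1+H)\cup(\ab_2+H)$ is a union of semigroup ideals, the condition $\pb+\qb\in(\bb+H)\cup(\ab_1+H)\cup(\ab_2+H)$ holds for all $\pb,\qb\in B_H$ if and only if it holds for all nonzero $\pb,\qb\in H$; after that, the standard decomposition $\pb=n_1\bb+n_2\ab_i+\pb'$ with $\pb'\in H_i^*\cup\{0\}$ gives both directions simultaneously. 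You instead prove the two implications separately, and for (a)$\Rightarrow$(b) you decompose a point of $H_1^*$ as a minimal generator of $\omega_H$ plus a remainder, check that the generator must lie in $D_1=B_H\cap H_1^*$ and the remainder in $H_1$, and use coordinate monotonicity in the basis $\{\ab_1,\bb\}$ to push the witness of failure down to a pair of Hilbert basis elements. Your route is longer but entirely sound (the auxiliary facts you use --- $G(\omega_H)=\{\bb\}\cup D_1\cup D_2$, $\NN\bb\cap H_i^*=\emptyset$, and the sign analysis of $[\,\cdot\,]$-coordinates --- all check out); what the paper's one-line upgrade buys is precisely the avoidance of what you correctly identify as the ``main obstacle,'' at the cost of noticing that the right-hand side of the criterion is closed under adding elements of $H$.
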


\begin{proof}
We know that $\bb \in G(\omega_H)$ since it is the bottom element in $H$.
If $K[H]$ is a regular ring, then $\bb=\ab_1+\ab_2$. Hence statement (a) holds by Example~\ref{gore-regular}, and (b) is true since  $H_1^*=H_2^*=\emptyset$. 

Assume that $K[H]$ is not a regular ring, hence  $\bb \in B_H\setminus \{\ab_1,\ab_2\}$. 
According to Theorem~\ref{two}, $\bb$ is an Ulrich element in $H$ if and only if for all $\pb, \qb \in B_H$ one has
\begin{equation}
\label{pq}
\pb+\qb \in (\bb+H)\union (\ab_1+H) \union (\ab_2 +H).
\end{equation}
It is of course equivalent to check \eqref{pq} for all $\pb$ and $\qb$ nonzero in $H$.

If $(\pb \in H_1$ and $\qb\in H_2$) or ($\pb \in H_2$ and $\qb \in H_1$) then $\pb+\qb \in \bb+H$, by Lemma~\ref{H12b}. 
Thus, for (a) it suffices  to check \eqref{pq} for nonzero $\pb, \qb$ both in $H_1$ or both in $H_2$.
For $i=1,2$, the semigroup $H_i$ is normal and simplicial, hence  any $\pb \in H_i$ is of the form $\pb=n_1 \bb+ n_2 \ab_i+\pb'$ with $n_1, n_2 \in \NN$ and $\pb'\in H_i^* \cup \{0\}$.
Consequently, $\bb$ is an Ulrich element in $H$ if and only if property (b) holds.
\end{proof}

\begin{Definition}
{\em
We say that {\em  $H$ is AG1} if condition (b)  in  Lemma~\ref{AG2} is satisfied for $i=1$, and we call it {\em AG2} if the said condition is satisfied for $i=2$. 
}
\end{Definition}

Thus the bottom element is an Ulrich element in $H$ if and only if $H$ is AG1 and AG2.

\begin{Remark}
\label{rem:ag1ag2}{\em
Using Lemma~\ref{innerbounds}, property AG1 means that  for any $\pb=(k,r)$ and $\qb=(\ell,s)\in H_1^*$ such that $k+\ell <x_1$
and $r+s \leq y_1$ one has that $\pb+\qb\notin H_1^*$.

 Similarly, the AG2 condition means that when $\pb=(k,r)$ and $\qb=(\ell, s) \in H_2^*$ such that $k+\ell \leq x_2$ and  $r+s<y_2$, then $\pb+\qb\notin H_2^*$.
}
\end{Remark}

\begin{Remark} 
\label{ag-zero}{\em Assume $\bb=(u,v)$ is the bottom element in $H$.
If $y_1=0$ then $v=1$, since otherwise the inequalities $v/u>(v-1)/u>y_1/x_1=0$ would give that $(u,v-1) \in \omega_{H_1}$, a contradiction to the fact that $(u,v)$ is the bottom element in $H$. Then, by Lemma~\ref{innerbounds} we get that $H_1^*=\emptyset$, hence $H$ satisfies condition AG1.

Similarly, if $x_2=0$ then $u=1$ and $H_2^*=\emptyset$; hence $H$ is AG2.
}
\end{Remark}

In order to check the AG1 and AG2 conditions we need to have a  better understanding of the points in $H_1^*$ and $H_2^*$.
We can count their elements.

\begin{Lemma}
\label{number}
Let $\bb=(u,v)$ be the bottom element for $H$. 
Then
\begin{enumerate}
\item [(a)] $|H_1^*|= vx_1-uy_1-1$ and $|H_2^*|=uy_2-vx_2-1$.
\item [(b)]  $1\leq vx_1-u y_1 \leq x_1$ and $1\leq uy_2-vx_2-1\leq y_2$.
\item [(c)] if $H_1^*\neq \emptyset$ then $vx_1-u y_1 \leq x_1-u$.
\item [(d)] if $H_2^*\neq \emptyset$ then $uy_2-vx_2 \leq y_2-v$.
\end{enumerate}
\end{Lemma}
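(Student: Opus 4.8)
The plan is to prove the four claims in order, deducing (c) and (d) from (a) and (b). Throughout write $\bb=(u,v)$ and record two preliminaries. Since $\ab_1$ is the extremal ray nearer the $x$-axis it is not vertical, so $x_1\geq 1$; likewise $\ab_2$ is not horizontal, so $y_2\geq 1$. As $\bb\in\omega_H=\relint C\cap\ZZ^2$, its slope is strictly between those of the two rays, i.e. $y_1/x_1 < v/u < y_2/x_2$ (reading $y_2/x_2=\infty$ when $x_2=0$); cross-multiplying gives $vx_1-uy_1>0$ and $uy_2-vx_2>0$. Moreover $\bb$ is primitive: if $\gcd(u,v)=g>1$ then $(u/g,v/g)$ is a lattice point on the ray through $\bb$, hence lies in $\relint C\cap\ZZ^2=\omega_H$ and is strictly smaller than $\bb$ componentwise, contradicting that $\bb$ is the bottom element.

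For (a) I would count by the index of a sublattice. The $\ZZ^2$-points of the semi-open parallelotope $P_{H_1}=\{\lambda_1\ab_1+\lambda_2\bb : 0\leq\lambda_1,\lambda_2<1\}$ form a complete set of coset representatives for $\ZZ^2/(\ZZ\ab_1+\ZZ\bb)$, so their number equals $[\ZZ^2:\ZZ\ab_1+\ZZ\bb]=|\det(\ab_1,\bb)|=|vx_1-uy_1|=vx_1-uy_1$. The only part of the boundary belonging to the semi-open $P_{H_1}$ consists of the two half-open edges $\{t\ab_1:0\leq t<1\}$ and $\{t\bb:0\leq t<1\}$, and since both $\ab_1$ and $\bb$ are primitive the only lattice point on them is $0$. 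Subtracting this single point yields $|H_1^*|=vx_1-uy_1-1$; the count for $H_2^*$ is identical, with $|\det(\ab_2,\bb)|=uy_2-vx_2$.

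For (b) the lower bounds are immediate, since $vx_1-uy_1$ and $uy_2-vx_2$ are positive integers. For the upper bound $vx_1-uy_1\leq x_1$ I would argue by contradiction: if $vx_1-uy_1>x_1$ then $(v-1)x_1>uy_1$ (which forces $v\geq 2$, so $(u,v-1)\in\NN^2$), whence the slope $(v-1)/u$ lies strictly between $y_1/x_1$ and $v/u<y_2/x_2$; thus $(u,v-1)\in\omega_H$, contradicting the minimality of $\bb$. The bound $uy_2-vx_2\leq y_2$ is obtained symmetrically by testing $(u-1,v)$. This exploitation of the bottom-element property is the crux of the lemma and the step I expect to be the most delicate, the one point requiring care being to check that the degenerate configurations (notably $x_2=0$, which forces $u=1$) are subsumed by the same inequalities rather than breaking them.

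Finally, (c) and (d) follow by combining (a), (b) and Lemma~\ref{innerbounds}. By Lemma~\ref{innerbounds}(a) every $(k,r)\in H_1^*$ satisfies $u<k<x_1$, leaving at most $x_1-u-1$ integer abscissae; I claim these abscissae are pairwise distinct. Indeed, if $(k,r),(k,r')\in H_1^*$ with $r<r'$, writing each as a combination $\lambda_1\ab_1+\lambda_2\bb$ with coefficients in $(0,1)$ and subtracting gives $r'-r=c\,(vx_1-uy_1)/x_1$ for some $c\in(0,1)$, so $r'-r<(vx_1-uy_1)/x_1\leq 1$ by (b), contradicting $r'-r\geq 1$. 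Hence $|H_1^*|\leq x_1-u-1$, and comparison with (a) yields $vx_1-uy_1\leq x_1-u$. Part (d) is proved the same way, using Lemma~\ref{innerbounds}(b), the distinctness of the ordinates of the points of $H_2^*$, and the bound $uy_2-vx_2\leq y_2$.
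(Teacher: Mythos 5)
Your proof is correct, and while parts (b) mirror the paper's argument (testing $(u,v-1)$, resp.\ $(u-1,v)$, against the minimality of the bottom element), you diverge from the paper in two places worth noting. For (a) the paper invokes Pick's theorem on the closed parallelogram, whereas you count the lattice points of the semi-open parallelotope as coset representatives of $\ZZ^2/(\ZZ\ab_1+\ZZ\bb)$ and then remove the origin using primitivity of $\ab_1$ and $\bb$; the two counts are equivalent, but your version makes explicit why $\gcd(u,v)=1$ (the paper uses this silently), and it avoids importing Pick's theorem. The more substantial difference is in (c) and (d): the paper defers these until after Lemma~\ref{k-point}, whose floor/ceiling analysis of the remainders $r_k$ shows that each abscissa $k\in(u,x_1)$ carries at most one point of $H_1^*$. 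You obtain the same injectivity of the abscissa map directly, by subtracting the $(\lambda_1,\lambda_2)$-coordinates of two hypothetical points with equal first entry and observing that their ordinates would differ by $c\,(vx_1-uy_1)/x_1<1$ with $c\in(0,1)$, using the upper bound from (b). This makes your proof self-contained and free of the forward reference, at the cost of not producing the explicit description of the points of $H_1^*$ that the paper's Lemma~\ref{k-point} supplies and reuses later.
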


\begin{proof} We only show the first part of (a) and (b), since the second part is proved similarly.

(a): The area of the parallelogram spanned by  $\bb$ and $\ab_1$ equals 
$\det \begin{pmatrix}x_1 & u \\ y_1 & v\end{pmatrix}=vx_1-uy_1$. Since the boundary of that parallelogram contains precisely four lattice points,  the vertices, (here we use the fact that $\gcd(u,v)=\gcd(x_1, y_1)=1)$, Pick's theorem (\cite[Theorem 2.8]{BeRo}) implies that $P_{H_1}$ has $vx_1-uy_1-1$ inner lattice points, which proves the claim.

 (b): The inequality $1\leq vx_1-uy_1$ follows from (a). Since $(u,v)$ is the bottom element of $H$, it follows that  $(u, v-1)$ is not in $\omega_H$ and in $\relint P_{H_1}$.  As $(v-1)/u <v/u$,  and $y_1/x_1 <v/u$ by our assumption, we get that $(v-1)/u \leq y_1/x_1$, i.e. $vx_1-u y_1 \leq x_1$. 

Parts (c) and (d) will be proved after Remark~\ref{k-point-bis}.
\end{proof} 

One nice consequence of Lemma~\ref{number} is a Gorenstein criterion for $K[H]$ in terms of the coordinates of the bottom element in $H$.

\begin{Corollary}
	\label{gor}
	If $\bb=(u,v)$ is the bottom element in $H$, then the $K$-algebra $K[H]$ is Gorenstein if and only if 
$vx_1-u y_1=uy_2-v x_2=1$.
 \end{Corollary}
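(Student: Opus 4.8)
The plan is to characterize the Gorenstein property combinatorially and then connect it to the formulas from Lemma~\ref{number}. Recall that $K[H]$ is Gorenstein if and only if $\omega_R$ is a principal ideal, equivalently $G(\omega_H)$ is a singleton. First I would treat the regular case separately: if $K[H]$ is regular then $\bb=\ab_1+\ab_2$ is the bottom element, so $u=x_1+x_2$ and $v=y_1+y_2$, and a direct computation gives $vx_1-uy_1=(y_1+y_2)x_1-(x_1+x_2)y_1=y_2x_1-x_2y_1$; since in the regular case $H_1^*=H_2^*=\emptyset$, Lemma~\ref{number}(a) forces this to equal $1$, and symmetrically $uy_2-vx_2=1$. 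Conversely if both quantities equal $1$ and the ring is regular, the equivalence holds trivially. So the crux is the non-regular case.

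For the non-regular case, the key observation is that by Lemma~\ref{lemma:omega}, $G(\omega_H)=B_H\setminus\{\ab_1,\ab_2\}$, so $K[H]$ is Gorenstein exactly when $B_H\setminus\{\ab_1,\ab_2\}=\{\bb\}$, i.e.\ when $\bb$ is the unique nonzero lattice point of $P_H$ in $\relint C$. I would then argue that $\relint P_H\cap\ZZ^2=H_1^*\cup H_2^*\cup\{\bb\}$ (up to the position of $\bb$ on the common ray $\NN\bb=H_1\cap H_2$). More precisely, since $H=H_1\cup H_2$ with $H_1\cap H_2=\NN\bb$, and $\bb$ is an extremal ray of both $H_1$ and $H_2$, the interior lattice points of $P_H$ that are not multiples of $\bb$ split according to which side of the ray through $\bb$ they lie on, landing in $H_1^*$ or $H_2^*$ respectively, while $\bb$ itself is the only interior point of $P_H$ lying on the ray $\RR_{\geq0}\bb$. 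Thus $|\,\relint P_H\cap\ZZ^2\,|=|H_1^*|+|H_2^*|+1$. By Lemma~\ref{number}(a) this equals $(vx_1-uy_1-1)+(uy_2-vx_2-1)+1=vx_1-uy_1+uy_2-vx_2-1$.

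Putting this together, $K[H]$ is Gorenstein if and only if $\relint P_H\cap\ZZ^2=\{\bb\}$, i.e.\ if and only if the count above equals $1$, that is $vx_1-uy_1+uy_2-vx_2=2$. Since by Lemma~\ref{number}(b) each of the two summands $vx_1-uy_1$ and $uy_2-vx_2$ is at least $1$, their sum equals $2$ precisely when each equals $1$, giving $vx_1-uy_1=uy_2-vx_2=1$. This completes both directions. The main obstacle I anticipate is justifying cleanly the partition $\relint P_H\cap\ZZ^2=H_1^*\sqcup H_2^*\sqcup\{\bb\}$, in particular verifying that no interior lattice point of $P_H$ is miscounted or double counted along the separating ray $\NN\bb$ and that the boundary points of $P_{H_1}$ and $P_{H_2}$ do not spuriously contribute; once that is established, the rest is the arithmetic reduction to Lemma~\ref{number}.
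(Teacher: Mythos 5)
Your opening reduction (Gorenstein $\iff$ $G(\omega_H)$ is a singleton, with the regular case handled separately) is fine, but the non-regular case rests on two intermediate claims that are both false. First, ``$B_H\setminus\{\ab_1,\ab_2\}=\{\bb\}$, i.e.\ $\bb$ is the unique nonzero lattice point of $P_H$ in $\relint C$'' conflates the minimal generators of $\omega_H$ with \emph{all} interior lattice points of $P_H$: the latter set can contain sums of generators. For $\ab_1=(3,2)$, $\ab_2=(2,3)$, $\bb=(1,1)$ the ring is Gorenstein ($B_H=\{\ab_1,\ab_2,\bb\}$, $vx_1-uy_1=uy_2-vx_2=1$), yet $\relint P_H\cap\ZZ^2=\{(1,1),(2,2),(3,3),(4,4)\}$ has four elements, so your criterion ``Gorenstein iff the count equals $1$'' already fails here. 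Second, the partition $\relint P_H\cap\ZZ^2=H_1^*\sqcup H_2^*\sqcup\{\bb\}$ does not hold: by Pick's theorem the left side has $\det(\ab_1,\ab_2)-1$ elements while the right side has $\det(\ab_1,\bb)+\det(\bb,\ab_2)-1$ elements, and the difference is $\det(\ab_1-\bb,\ab_2-\bb)$, i.e.\ twice the area of the triangle with vertices $\bb,\ab_1,\ab_2$; this vanishes only when $\bb$ lies on the line through $\ab_1$ and $\ab_2$. Concretely, in Example~\ref{bott-Ul} ($\ab_1=(11,13)$, $\ab_2=(3,4)$, $\bb=(4,5)$) one has $|H_1^*|+|H_2^*|+1=3$ but $|\relint P_H\cap\ZZ^2|=4$: the point $(9,11)=\bb+(5,6)$ lies in $\relint P_H$ but in none of the three pieces. (In other examples $H_1^*$ is not even contained in $P_H$.) So both the set you are counting and the way you count it are wrong, and the fact that the final numerical criterion is correct is not a consequence of your argument.

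The paper's route avoids $P_H$ entirely. Since $\bb$ is the bottom element, $K[H]$ is Gorenstein iff $\omega_H=\bb+H$. Every point of $H_i^*$ lies in $\omega_H$ but not in $\bb+H$ (its $\bb$-coordinate with respect to the basis $\ab_i,\bb$ of $H_i$ is less than $1$), so Gorensteinness forces $H_1^*=H_2^*=\emptyset$; conversely, if both are empty, then writing any $\cb\in\omega_H\subseteq H=H_1\cup H_2$ as $n_1\ab_i+n_2\bb+\pb'$ with $\pb'\in H_i^*\cup\{0\}$ shows $\cb\in\bb+H$. Then Lemma~\ref{number}(a) gives $|H_i^*|=0$ iff $vx_1-uy_1=1$ and $uy_2-vx_2=1$. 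If you want to salvage a lattice-point count, it must be performed in $P_{H_1}$ and $P_{H_2}$ separately, not in $P_H$.
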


\begin{proof}
 The ring $K[H]$ is Gorenstein if and only if $\omega_H$ is a principal ideal, hence generated by $\bb$, which is equivalent to saying that  $P_{H_1}$ and $P_{H_2}$ have no inner points. By Lemma~\ref{number} this is the case if and only if 
$vx_1-u y_1=uy_2-v x_2=1$.
\end{proof}

\begin{Lemma}
\label{k-point} 
Let $\bb=(u,v)$ be the bottom element of $H$.  We assume that $H_1^*$ is not the empty set.
 For any integer $i$ we consider the integers $q_i, r_i$ such that $iy_1 =q_i x_1+r_i$ with $0\leq r_i <x_1$.

Assume the integer $k$ satisfies $u<k<x_1$. The following statements are equivalent:
\begin{enumerate}
\item [(i)]$k$ is the $x$-coordinate of some $\pb \in H_1^*$;
\item [(ii)]$\lceil ky_1/x_1 \rceil \leq v+ \lfloor (k-u)y_1/x_1 \rfloor$;
\item[(iii)]$\lceil ky_1/x_1 \rceil = v+ \lfloor (k-u)y_1/x_1 \rfloor$;
\item [(iv)]$q_k \leq v-1 +q_{k-u}$;
\item [(v)]$q_k = v-1 +q_{k-u}$;
\item [(vi)] $r_k \geq r_{k-u}+x_1-(vx_1-uy_1)$;
\item [(vii)] $r_k = r_{k-u}+x_1-(vx_1-uy_1)$;
\item [(viii)]$r_k \geq x_1- (vx_1-uy_1)$.
\end{enumerate}
If any of these conditions holds, then  $\pb=(k,\lceil ky_1/x_1\rceil)=(k, q_k+1)$.
\end{Lemma}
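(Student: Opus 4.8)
The plan is to work in the $\RR$-basis $\{\ab_1,\bb\}$ of $\RR^2$, translate membership in $H_1^*$ into two linear inequalities, and then push the arithmetic around. Write $D=vx_1-uy_1>0$ (positive by Notation~\ref{not}), and for a lattice point $\pb=(k,r)$ expand $\pb=\alpha\ab_1+\beta\bb$, so that $\alpha=(kv-ur)/D$ and $\beta=(x_1r-y_1k)/D$. By definition $\pb\in H_1^*=\relint P_{H_1}\cap\ZZ^2$ iff $0<\alpha<1$ and $0<\beta<1$. First I would observe that in the range $u<k<x_1$ the conditions on $\beta$ already force those on $\alpha$: from $x_1\alpha=k-u\beta$ one reads off $\alpha>(k-u)/x_1>0$ when $\beta<1$, and $\alpha<k/x_1<1$ when $\beta>0$. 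Hence for such $k$ the point $(k,r)$ lies in $H_1^*$ exactly when
\[
\frac{ky_1}{x_1}<r<v+\frac{(k-u)y_1}{x_1},
\]
i.e.\ the vertical slice of $\relint P_{H_1}$ at abscissa $k$ is this open interval of length $D/x_1$.

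Next I would pin down the integers in this interval. Since $\ab_1$ is primitive, $\gcd(x_1,y_1)=1$, so for $0<k<x_1$ the value $ky_1/x_1$ is non-integral with ceiling $q_k+1$, and for $0<k-u<x_1$ the remainder $r_{k-u}$ is positive. By Lemma~\ref{number}(b) the interval has length $D/x_1\le 1$, hence meets $\ZZ$ in at most one integer; as its left endpoint lies strictly between $q_k$ and $q_k+1$, the only candidate is $q_k+1$, which lies in the interval precisely when $q_k+1<v+q_{k-u}+r_{k-u}/x_1$. Because $0<r_{k-u}/x_1<1$ and the outer quantities are integers, this is equivalent to $q_k+1\le v+q_{k-u}$, i.e.\ to (iv), and via $\lceil ky_1/x_1\rceil=q_k+1$ and $\lfloor (k-u)y_1/x_1\rfloor=q_{k-u}$ also to (ii); in that case $\pb=(k,q_k+1)$. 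This settles (i)$\Leftrightarrow$(ii)$\Leftrightarrow$(iv). To reach the equality forms (iii),(v) I would rule out $q_k+2\le v+q_{k-u}$: combined with the elementary bound $q_k-q_{k-u}>uy_1/x_1-1$ it would give $x_1<vx_1-uy_1=D$, again contradicting Lemma~\ref{number}(b).

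It then remains to bring in the remainder conditions (vi)--(viii). Subtracting the defining identities $q_kx_1+r_k=ky_1$ and $q_{k-u}x_1+r_{k-u}=(k-u)y_1$ yields the key relation
\[
r_k-r_{k-u}=uy_1-(q_k-q_{k-u})x_1.
\]
Substituting $q_k-q_{k-u}=v-1$ turns this into $r_k=r_{k-u}+x_1-D$, so (v)$\Leftrightarrow$(vii); substituting the inequality $q_k-q_{k-u}\le v-1$ and dividing by $x_1>0$ gives (iv)$\Leftrightarrow$(vi). Finally (vi)$\Rightarrow$(viii) is immediate from $r_{k-u}\ge 0$, while for (viii)$\Rightarrow$(iv) I would argue by contraposition: if $q_k-q_{k-u}\ge v$, the displayed relation forces $r_k-r_{k-u}\le -D$, whence $r_k<x_1-D$ because $r_{k-u}<x_1$, contradicting (viii). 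This closes the cycle of equivalences.

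The main obstacle is not any single step but the promotion of the inequalities to equalities, that is (ii)$\Leftrightarrow$(iii); everything there hinges on the estimate $D=vx_1-uy_1\le x_1$. I would therefore be careful to invoke only Lemma~\ref{number}(b) throughout, and \emph{not} parts (c),(d), whose proofs are deferred to after this lemma, so as to avoid a circular dependence.
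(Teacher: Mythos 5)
Your proof is correct and follows essentially the same route as the paper: both reduce membership of $(k,r)$ in $H_1^*$ for $u<k<x_1$ to the open interval $ky_1/x_1<r<v+(k-u)y_1/x_1$ containing an integer, use the bound $vx_1-uy_1\le x_1$ from Lemma~\ref{number}(b) to see that this integer can only be $q_k+1$ and that the weak and strict forms coincide, and then translate everything into the $q_i$'s and $r_i$'s. The only (welcome) refinement is that you explicitly verify that for $u<k<x_1$ the constraints coming from the $\ab_1$-coordinate of the parallelogram are automatic, a point the paper's proof asserts geometrically without comment, and you are right that only part (b) of Lemma~\ref{number} may be invoked here.
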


\begin{proof}
Since $H_1^*\neq \emptyset$ we have that $y_1>0$ and $u<x_1$ by Remark~\ref{ag-zero} and Lemma~\ref{innerbounds}, respectively. 
 We note that for any integer $u<k<x_1$, the fractions $ky_1/x_1$ and $v+(k-u)y_1/x_1$ are not integers.
Thus  $\lceil ky_1/x_1 \rceil =q_k+1$ and $\lfloor v+(k-u)y_1/x_1 \rfloor=v+q_{k-u}$.  This shows that (ii) \iff (iv) and   
(iii) \iff (v).
Since  $q_k=(ky_1-r_k)/x_1$, simple manipulations give that (iv) \iff (vi) and (v)\iff (vii).

We also infer that the number of  points in $H_1^*$ whose $x$-coordinate is $k$ equals the number of lattice points on the line  $x=k$ located strictly between the lines $y=\frac{y_1}{x_1} x$ and $y= \frac{y_1}{x_1}(x-u)+v$, which is
\begin{eqnarray}
\label{qk}
\left\lfloor \frac{y_1}{x_1}(k-u)+v \right\rfloor - \left\lceil \frac{y_1}{x_1} k \right\rceil +1  &=&
v+q_k+\left\lfloor \frac{r_k- uy_1}{x_1}\right\rfloor-(q_k +1)+1  \\ 
\label{rk}
&=&  \left\lfloor \frac{v x_1-uy_1 +r_k}{x_1}\right\rfloor \in \{0,1\}.
\end{eqnarray}
The latter statement is due to the fact that $r_k<x_1$ and $vx_1-u y_1 \leq x_1$, by Lemma~\ref{number}.

Consequently, $k\in (u, x_1)$ is the $x$-coordinate of some point in $H_1^*$ if and only if the value in equation \eqref{qk} is at least (and actually equal to) $1$, which is equivalent to property (ii), respectively to (iii). That is, moreover, equivalent (using \eqref{rk}) to 
$$1\leq \frac{vx_1-uy_1+r_k}{x_1},$$
which can be rewritten as $r_k \geq x_1-(vx_1-uy_1)$, namely statement (viii).

From \eqref{qk} and \eqref{rk} we obtain that if $k$ is the $x$-coordinate of some point $\pb \in H_1^*$, then $\pb=(k, \lceil{\frac{y_1}{x_1}k} \rceil ) = (k, q_k+1)$.
\end{proof}

\begin{Remark} 
\label{k-point-bis} {\em
A similar result holds for the points in $H_2^*$  in terms of the integers $q'_i, r'_i$ such that $i x_2 =q'_i y_2+r'_i$ with $0\leq r'_i <y_2$. }
\end{Remark}

Now we can finish the proof of Lemma \ref{number}.

\begin{proof} (of Lemma \ref{number},  continued). 

(c): By Lemma~\ref{k-point}, for each  $u<k<x_1$ there is at most one point in $H_1^*$ whose $x$-coordinate is $k$, therefore 
$|H_1^*|\leq  x_1-u-1$.  Using point (a) we obtain the inequality at (c). Part (d) is proved similarly.
\end{proof}

It will be convenient to denote $\pi_1(H_1^*)=\{k: \text{ there exists }(k,\ell) \in H_1^* \}$.
The next result is a criterion to verify the AG1 property in terms of the remainders $r_i$ introduced in Lemma~\ref{k-point}
, with  $i\in \pi_1(H_1^*)$. A similar statement characterizes the AG2 property in terms of the $r'_j$'s from Remark~\ref{k-point-bis}, with $j\in \pi_2(H_2^*)$.

\begin{Proposition}
\label{AG1-rk}
For any integer $i$ let $r_i  \equiv  iy_1  \mod x_1$ with $0\leq r_i <x_1$.
Then $H$ is AG1 if and only if $r_k+ r_\ell  < 2x_1 -(vx_1-uy_1)$ for all integers $k,\ell \in \pi_1(H_1^*)$  with $k+\ell<x_1$.
\end{Proposition}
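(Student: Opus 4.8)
The plan is to parametrize $H_1^*$ via Lemma~\ref{k-point} and thereby reduce the AG1 condition to a purely arithmetic statement about the remainders $r_i$. By Lemma~\ref{k-point}, the map $k\mapsto(k,q_k+1)$ is a bijection between $\pi_1(H_1^*)$ and $H_1^*$, and $k\in\pi_1(H_1^*)$ holds precisely when $u<k<x_1$ and $r_k\geq x_1-(vx_1-uy_1)$. First I would observe that, since any point of $H_1^*$ has $x$-coordinate strictly less than $x_1$ by Lemma~\ref{innerbounds}(a), the failure of AG1 is equivalent to the existence of $k,\ell\in\pi_1(H_1^*)$ with $k+\ell<x_1$ such that the sum $\pb+\qb=(k+\ell,\,q_k+q_\ell+2)$ of the corresponding points again lies in $H_1^*$.

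The heart of the argument is a carry analysis. Adding the division identities $ky_1=q_kx_1+r_k$ and $\ell y_1=q_\ell x_1+r_\ell$ gives $(k+\ell)y_1=(q_k+q_\ell)x_1+(r_k+r_\ell)$ with $0\leq r_k+r_\ell<2x_1$; hence either $r_k+r_\ell<x_1$, in which case $q_{k+\ell}=q_k+q_\ell$, or $r_k+r_\ell\geq x_1$, in which case $q_{k+\ell}=q_k+q_\ell+1$ and $r_{k+\ell}=r_k+r_\ell-x_1$. Now $\pb+\qb\in H_1^*$ requires two things: that $k+\ell\in\pi_1(H_1^*)$, and that its $y$-coordinate $q_k+q_\ell+2$ equal the forced value $q_{k+\ell}+1$, i.e.\ $q_{k+\ell}=q_k+q_\ell+1$. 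The latter selects the second case above, so $r_{k+\ell}=r_k+r_\ell-x_1$; substituting this into the membership criterion $r_{k+\ell}\geq x_1-(vx_1-uy_1)$ from Lemma~\ref{k-point}(viii) turns both requirements into the single inequality $r_k+r_\ell\geq 2x_1-(vx_1-uy_1)$.

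Finally, I would invoke Lemma~\ref{number}(b), which gives $vx_1-uy_1\leq x_1$, so that $2x_1-(vx_1-uy_1)\geq x_1$; this shows that the displayed inequality already forces $r_k+r_\ell\geq x_1$ and hence subsumes the condition $q_{k+\ell}=q_k+q_\ell+1$, making it the only thing to check (the constraints $u<k+\ell$ and $k+\ell<x_1$ being automatic, respectively from $k+\ell>k>u$ and from the reduction in the first paragraph). Thus $\pb+\qb\in H_1^*$ holds exactly when $k+\ell<x_1$ and $r_k+r_\ell\geq 2x_1-(vx_1-uy_1)$, and negating the existence of such a pair yields the claimed criterion. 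I expect the only delicate point to be the carry bookkeeping together with the verification that $vx_1-uy_1\leq x_1$ lets the single remainder inequality absorb the $y$-coordinate matching condition; everything else is routine substitution.
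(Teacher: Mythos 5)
Your proof is correct, and it reaches the same final inequality as the paper, but the key step is executed differently. The paper, after the same reduction to pairs with $k+\ell<x_1$ via Lemma~\ref{innerbounds}, tests membership of $\pb+\qb$ in $H_1^*$ directly: since the sum's $y$-coordinate $\lceil ky_1/x_1\rceil+\lceil \ell y_1/x_1\rceil$ automatically lies strictly above the line $y=(y_1/x_1)x$, only the comparison with the upper boundary line $y=(y_1/x_1)(x-u)+v$ of $P_{H_1}$ matters, and clearing denominators in that single ceiling inequality yields $r_k+r_\ell$ versus $2x_1-(vx_1-uy_1)$ in one stroke (with a separate remark that equality is excluded because the right-hand side of the ceiling inequality is non-integral). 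You instead split membership into ``$k+\ell\in\pi_1(H_1^*)$'' plus ``the $y$-coordinate matches the forced value $q_{k+\ell}+1$,'' handle the second condition by a carry analysis on the quotients, convert the first via criterion (viii) of Lemma~\ref{k-point} applied to $k+\ell$, and then use $vx_1-uy_1\leq x_1$ from Lemma~\ref{number}(b) to show the single remainder inequality absorbs the carry condition. Your route reuses Lemma~\ref{k-point} as a black box for the sum point and so avoids redoing the strip geometry, at the cost of the extra bookkeeping with carries and the absorption step; the paper's route is a shorter direct computation but must separately observe that the boundary (equality) case cannot occur. Both are complete; your treatment of the constraints $u<k+\ell<x_1$ needed to invoke Lemma~\ref{k-point} at $k+\ell$ is the one point that had to be checked, and you handle it correctly.
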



\begin{proof}
If $H_1^*=\emptyset$ then there is nothing to prove. Assume $H_1^*$ is not empty. 
If $k, \ell \in \pi_1(H_1^*)$ then by Lemma \ref{k-point}, $\pb_1=(k, \lceil k y_1/x_1\rceil)$ and $\pb_2=(\ell, \lceil \ell y_1/x_1\rceil)$ are the corresponding points in $H_1^*$.
By definition, $H$ is AG1 if and only if $\pb_1+\pb_2 \notin H_1^*$ for all $\pb_1$ and $\pb_2$ as above. 
When $k+\ell \geq x_1$,  Lemma \ref{innerbounds} implies already that  $\pb_1+\pb_2 \notin H_1^*$.
If  $k+\ell < x_1$, then $\pb_1+\pb_2 \notin H_1^*$ if and only if 
\begin{eqnarray}
\label{rightnot1} \left\lceil \frac{ky_1}{x_1} \right\rceil + \left\lceil\frac{\ell y_1}{x_1} \right\rceil  &\geq&  (k+\ell -u) \frac{y_1}{x_1} +v, \quad \text{ equivalently} \\ 
\nonumber  \frac{ky_1- r_k}{x_1}+1 +\frac{\ell y_1-r_\ell}{x_1}+1 &\geq&  (k+\ell -u) \frac{y_1}{x_1} +v, \\
\nonumber k y_1 -r_k + \ell y_1-r_\ell +2 x_1 &\geq&  (k+\ell)y_1-uy_1+vx_1,\\
\label{rightnot2} 2x_1-(vx_1-u y_1) &\geq&  r_k+ r_\ell.
\end{eqnarray} 
Since $u<k+\ell<x_1$, the term of  the right hand side of \eqref{rightnot1} is not an integer, hence the inequality at \eqref{rightnot1} (and equivalently, at \eqref{rightnot2}) can not become an equality.
\end{proof}

\section{A criterion for $(1,1)$ to be an Ulrich element}

Our aim in this  section is to obtain a complete classification of when  $\bb=(1,1)$ is an Ulrich element.
The setup in Notation~\ref{not} is in use.
The element $(1,1)$ is in $\omega_H$ if and only if $y_1/x_1<1< y_2/x_2$.  
If that is the case, it is clear that $(1,1)$ is the bottom 
element in $H$.    
It suffices to verify the  AG1 and AG2 conditions, by Lemma~\ref{AG2}.
 
 Set $n=x_1-y_1-1$, which equals $|H_1^*|$, by Lemma~\ref{number}. If $n=0$, then $H$ is clearly AG1.

We consider the case $n>0$. 
The next result presents an explicit way to determine $H_1^*$.
Recursively, we define non-negative integers $\ell_1,\ldots, \ell_n$ and $s_1,\ldots,s_n$ by
\[
x_1=\ell_1(x_1-y_1)+s_1,   \quad \text{with} \quad s_1<x_1-y_1,
\]
and
\[
y_1+s_{i-1}=\ell_{i}(x_1-y_1)+s_{i}\quad \text{with} \quad  s_i<x_1-y_1,
\]
for $i=2,\ldots,n$.

\begin{Lemma}\label{elements-H1}
	Assume that $(1,1)$ belongs to $\omega_H$  and $H_1^*\neq \emptyset$.   Then
	\[
	H_1^*=\left\{\pb_t=(c_t,d_t) : c_t=t+\sum^t_{i=1}\ell_i \ , \ d_t=\sum^t_{i=1}\ell_i \ , \ t=1,\dots,n\right\},
	\]
	\end{Lemma}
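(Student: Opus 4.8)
The plan is to describe $H_1^*$ not through the remainder criterion of Lemma~\ref{k-point}, but by sorting its points according to the value of the linear functional $\delta(x,y)=x-y$, and then to check that the explicitly defined points $\pb_t$ realize precisely the $n$ admissible values of $\delta$. The whole argument reduces to one telescoping computation on the defining recursion, plus one elementary coprimality fact.

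First I would establish that $\delta$ restricts to a bijection from $H_1^*$ onto $\{1,\dots,n\}$. Since $\ab_1$ and $\bb$ are the extremal rays of $H_1$, any $\pb=(k,d)\in H_1^*=\ZZ^2\cap\relint P_{H_1}$ may be written as $\pb=\lambda_1\ab_1+\lambda_2\bb$ with $0<\lambda_1,\lambda_2<1$, and then $\delta(\pb)=k-d=\lambda_1(x_1-y_1)$, an integer lying in the open interval $(0,x_1-y_1)=(0,n+1)$; hence $\delta(\pb)\in\{1,\dots,n\}$. The map is injective, because equal values of $\delta$ force equal $\lambda_1$, so $\pb-\pb'=(\lambda_2-\lambda_2')\bb$ is a lattice point with $|\lambda_2-\lambda_2'|<1$, whence $\pb=\pb'$. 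As $|H_1^*|=n$ by Lemma~\ref{number}, $\delta$ is a bijection onto $\{1,\dots,n\}$.

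It therefore suffices to prove that the $n$ points $\pb_t$ are pairwise distinct elements of $H_1^*$ with $\delta(\pb_t)=t$: the bijection and the equality $|H_1^*|=n$ then force $\{\pb_t\}=H_1^*$. Distinctness and $\delta(\pb_t)=c_t-d_t=t$ are immediate from the definitions, so the real content is the membership $\pb_t\in\relint P_{H_1}$. Here I would telescope the recursion. Writing $g=x_1-y_1$ and adding the defining equations for $s_1,\dots,s_t$, the common terms $s_1+\dots+s_{t-1}$ cancel and one obtains $x_1+(t-1)y_1=g\sum_{i=1}^t\ell_i+s_t$; using $x_1=y_1+g$ this rearranges to $\sum_{i=1}^t\ell_i=1+(ty_1-s_t)/g$. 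Writing $\pb_t=\lambda_1\ab_1+\lambda_2\bb$, the relation $\delta(\pb_t)=t$ gives $\lambda_1=t/g\in(0,1)$, while comparing $y$-coordinates, $d_t=\lambda_1 y_1+\lambda_2$, together with the telescoped identity yields $\lambda_2=d_t-(t/g)y_1=1-s_t/g$. Thus $\pb_t\in\relint P_{H_1}$ exactly when $0<s_t<g$.

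The one genuinely arithmetic point, which I expect to be the only obstacle worth attention, is ruling out $s_t=0$ for $t=1,\dots,n$. From the telescoped identity $s_t\equiv x_1+(t-1)y_1\equiv ty_1\pmod g$, so $s_t=(ty_1)\bmod g$; and since $\ab_1$ is primitive, $\gcd(y_1,g)=\gcd(y_1,x_1-y_1)=\gcd(x_1,y_1)=1$, so $g\mid ty_1$ would force $g\mid t$, which is impossible for $1\le t\le n=g-1$. Hence $0<s_t<g$ for every such $t$, giving $\lambda_2=1-s_t/g\in(0,1)$ and establishing $\pb_t\in H_1^*$. This completes the proof.
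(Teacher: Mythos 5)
Your proof is correct, and it takes a genuinely different route from the paper's. The paper proves membership of the points $\pb_t$ by going through Lemma~\ref{k-point}: it indexes candidate points by their $x$-coordinate $k$, telescopes the recursion to show $q_{c_t}=q_{c_t-1}=\sum_{i=1}^t\ell_i-1$ for $k=c_t$, and invokes the remainder criterion to conclude $(c_t,1+q_{c_t})=\pb_t\in H_1^*$, finishing by the count $|H_1^*|=n$ from Lemma~\ref{number}. You bypass Lemma~\ref{k-point} entirely: you index the points of $H_1^*$ by the value of $\delta(x,y)=x-y$ (equivalently, by the $\ab_1$-coordinate in the basis $\{\ab_1,\bb\}$), observe that $\delta$ is an injection of $H_1^*$ into $\{1,\dots,n\}$, and verify $\pb_t\in\relint P_{H_1}$ directly by computing $\lambda_1=t/(x_1-y_1)$ and $\lambda_2=1-s_t/(x_1-y_1)$ from the same telescoped identity $x_1+(t-1)y_1=(x_1-y_1)\sum_{i=1}^t\ell_i+s_t$ that the paper uses. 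The price of your route is one extra arithmetic verification, namely $s_t\neq 0$, which you correctly settle via $s_t\equiv ty_1\pmod{x_1-y_1}$ and $\gcd(y_1,x_1-y_1)=\gcd(x_1,y_1)=1$; the paper never needs this because its criterion is stated in terms of the quotients $q_k$ rather than the remainders. What your approach buys is a cleaner structural statement --- $\delta$ restricts to a bijection $H_1^*\to\{1,\dots,n\}$, which explains why $H_1^*$ is naturally enumerated by $t=1,\dots,n$ --- and independence from the $q_k,r_k$ machinery; both arguments still rest on Lemma~\ref{number} (Pick's theorem) for the cardinality count that closes the proof.
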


\begin{proof}
	For $k=1,\ldots, x_1-1$, let $ky_1=q_kx_1+r_k$ with integers $q_k\geq 0$ and $x_1>r_k\geq 0$. 

           By Lemma~\ref{k-point}, 	 the integer $k>1$ is the $x$-coordinate of an element  of $H_1^*$ if and only if $q_k=q_{k-1}$.
In this case, $(k,1+q_k)\in H_1^*$.

	Now, let $t\geq 1$. Summing up the equations  $x_1= \ell_1(x_1-y_1)+s_1$ and $y_1+s_{i-1}=\ell_{i}(x_1-y_1)+s_{i}$,
	for $i=2,\dots,t$, we get
	\[
	x_1+(t-1)y_1+s_1+s_2+\dots+s_{t-1}=\sum^t_{i=1}\ell_i(x_1-y_1)+s_1+s_2+\dots+s_t,
	\]
	consequently,
	\[
	\left(t-1+\sum^t_{i=1}\ell_i \right) y_1=\left(\sum^t_{i=1}\ell_i-1 \right)x_1+s_t.
	\]
	Then
	\[
	\left(t+\sum^t_{i=1}\ell_i\right) y_1= \left(\sum^t_{i=1}\ell_i-1 \right)x_1+s_t+y_1,
	\]
	with  $s_t+y_1<x_1$. Therefore, $q_k=q_{k-1}=(\sum^t_{i=1}\ell_i-1)$, for $k=t+\sum^t_{i=1}\ell_i$. 

Note that 
\begin{eqnarray*}
n+ \sum_{i=1}^n \ell_i  &=& n+\frac{x_1-s_1}{x_1-y_1}+\sum_{i=2}^t \frac{y_1-s_{i-1}+s_i}{x_1-y_1} \\
   &= & n+ \frac{x_1+(n-1)y_1-s_n}{x_1-y_1}=n+1+ \frac{n {y_1}-s_n}{x_1-y_1} \\
&<& n+1+ y_1= x_1,     
\end{eqnarray*}
hence $\pb_t=(t+\sum^t_{i=1}\ell_i,\sum^t_{i=1}\ell_i)\in H_1^*$ for $t=1,\dots, n$.
	
	We know from Lemma~\ref{number}, that $H_1^*$ has exactly $n=x_1-y_1-1$ elements, so $\pb_1,\dots,\pb_n$ are the only elements of $H_1^*$.
\end{proof}

\begin{Examples}\label{Ex}{\em
	Let $x_1=\ell_1(x_1-y_1)+s_1$ and $y_1+s_{i-1}=\ell_{i}(x_1-y_1)+s_{i}$,
	for $i=2,\dots,n=x_1-y_1-1$ as before.
	\begin{enumerate}
		\item[(a)] If $y_1=1$, then $H_1^*=\{(m,1) \ : \ m=2,\dots,x_1-1\}$. In this case, $H$ is AG1 by Lemma~\ref{AG2}.
		\item[(b)] If $x_1-y_1 \in \{1,2\}$ then by Lemma~\ref{number}, $H_1^*$ is either empty, or it consists of one element, which is different from $(0,0)$. Hence $H$ is AG1. 
		\item[(c)]  If $2<2y_1<x_1<3y_1$, then $\ell_1=\ell_2=1$.
		Therefore, $\pb_1=(2,1)$ and $\pb_2=(4,2)=2\pb_1$ belong to $H_1^*$. Then, by definition,  $H$ is not AG1.
	\end{enumerate}
}
\end{Examples}

The next theorem   gives a simple arithmetic criterion to check the AG1 or AG2 property.
 
\begin{Theorem}
\label{ulrich11}
	Assume that $(1,1)$ belongs to $\omega_H$. Assuming Notation~\ref{not}, then  
\begin{enumerate} 
\item [(a)] $H$ is AG1  if and only if  $x_1 \equiv 1 \mod (x_1-y_1)$; 
\item [(b)] $H$ is AG2  if and only if  $y_2  \equiv 1 \mod (y_2-x_2) $; 
\item [(c)] $(1,1)$ is an Ulrich element in $H$ if and only if   $x_i \equiv 1 \mod (x_i-y_i)$ for $i=1,2$.
\end{enumerate}
\end{Theorem}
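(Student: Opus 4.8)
The plan is to reduce the statement to a clean arithmetic condition on the residues $r_k$ defined in Lemma~\ref{k-point}, then exploit the recursive description of $H_1^*$ from Lemma~\ref{elements-H1}. By Lemma~\ref{AG2}, $(1,1)$ is an Ulrich element exactly when $H$ is both AG1 and AG2, and parts (a) and (b) are symmetric (swapping the roles of the two coordinates and of $\ab_1,\ab_2$), so it suffices to prove part (a); part (c) is then immediate. For part (a), since $\bb=(1,1)$ we have $u=v=1$, so the quantity $vx_1-uy_1$ appearing throughout becomes simply $x_1-y_1$. I would first record that the claimed congruence $x_1\equiv 1 \pmod{x_1-y_1}$ is equivalent to $s_n=0$ in the notation preceding Lemma~\ref{elements-H1} (indeed $x_1-y_1$ divides $x_1$ iff it divides $y_1$ iff the recursion terminates with remainder zero), and to the statement that $r_k = x_1-(x_1-y_1)=y_1$ is achieved, or more precisely that the residues behave in the regular arithmetic-progression pattern forced by divisibility.

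The forward direction (AG1 $\implies$ congruence) I would prove by contraposition: assuming $x_1 \not\equiv 1 \pmod{x_1-y_1}$, I want to exhibit two points $\pb_s,\pb_t \in H_1^*$ with $\pb_s+\pb_t\in H_1^*$. The natural candidates come from Lemma~\ref{elements-H1}: the points $\pb_t=(t+\sum_{i=1}^t \ell_i,\ \sum_{i=1}^t \ell_i)$. The key observation is that the recursion $y_1+s_{i-1}=\ell_i(x_1-y_1)+s_i$ has the $s_i$ cycling through a fixed orbit determined by $y_1 \bmod (x_1-y_1)$, so when $x_1-y_1\nmid x_1$ the sequence of partial data repeats in a way that makes some $\pb_s+\pb_t$ land back in $H_1^*$. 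Concretely I would use the criterion (viii) of Lemma~\ref{k-point}, namely $k\in\pi_1(H_1^*)$ iff $r_k\geq x_1-(x_1-y_1)=y_1$, together with Proposition~\ref{AG1-rk}, which says AG1 holds iff $r_k+r_\ell < 2x_1-(x_1-y_1)=x_1+y_1$ for all $k,\ell\in\pi_1(H_1^*)$ with $k+\ell<x_1$. So the heart of the argument is purely about residues: AG1 fails iff there exist admissible $k,\ell$ (each with $r_k,r_\ell\geq y_1$) whose indices sum to less than $x_1$ and whose residues sum to at least $x_1+y_1$.

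For the converse (congruence $\implies$ AG1) I would show directly, via Proposition~\ref{AG1-rk}, that when $x_1-y_1 \mid x_1-1$ the residues $r_k$ for $k\in\pi_1(H_1^*)$ are tightly controlled. Writing $g=x_1-y_1$, the admissible indices are those $k$ with $r_k\geq y_1=x_1-g$, i.e. $r_k\in\{x_1-g,\dots,x_1-1\}$; the divisibility condition pins down exactly which residues occur and forces that for any two admissible residues with $k+\ell<x_1$ one has $r_k+r_\ell<x_1+y_1$. The cleanest route is to verify that under the congruence the recursion gives $s_n=0$, so the residues $r_{c_t}$ of the $x$-coordinates $c_t$ of the points $\pb_t$ form an increasing arithmetic progression with common difference $-g \bmod x_1$ that never wraps around to create a large sum. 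I would translate the failure inequality $r_k+r_\ell\geq x_1+y_1$ into a statement about $\pb_s+\pb_t$ having $x$-coordinate still below $x_1$ and $y$-coordinate consistent with membership in $H_1^*$, and show the congruence makes this impossible.

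The main obstacle I anticipate is the bookkeeping in the forward direction: producing an \emph{explicit} witness pair $\pb_s,\pb_t$ summing into $H_1^*$ when the congruence fails, and verifying both $c_s+c_t<x_1$ (so Lemma~\ref{innerbounds} does not automatically exclude the sum) and that the summed point genuinely satisfies criterion (viii). The temptation is to work abstractly with residues, but the cyclic structure of the $s_i$ needs to be made precise enough to guarantee a collision; Examples~\ref{Ex}(c) shows the phenomenon (there $\pb_2=2\pb_1$), and I expect the general witness to be built analogously by finding the first index where the residue pattern ``doubles up.'' Handling the boundary/degenerate cases ($y_1=1$, or $x_1-y_1\in\{1,2\}$, already dispatched in Examples~\ref{Ex}(a),(b)) will also require a short separate check, but these are consistent with the congruence and pose no real difficulty.
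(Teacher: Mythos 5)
Your skeleton matches the paper's: reduce everything to part (a) via Lemma~\ref{AG2} and the coordinate swap $H\mapsto H'$, and attack (a) through the explicit description of $H_1^*$ (Lemma~\ref{elements-H1}) together with the residue criterion of Lemma~\ref{k-point}(viii) and Proposition~\ref{AG1-rk}. However, the one concrete arithmetic claim you commit to is false: $x_1\equiv 1\pmod{x_1-y_1}$ is \emph{not} equivalent to $s_n=0$. Since $s_1$ is by definition the remainder of $x_1$ modulo $x_1-y_1$, the congruence is equivalent (for $x_1-y_1>1$) to $s_1=1$; and under that congruence the recursion gives $s_i=i$ and $\ell_i=\ell_1-1$ for $i=2,\dots,n$, so $s_n=n=x_1-y_1-1\neq 0$. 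Your parenthetical justification (``$x_1-y_1$ divides $x_1$ iff it divides $y_1$'') concerns $x_1\equiv 0$, not $x_1\equiv 1$, which is the wrong residue class. With the corrected normalization $s_1=1$ your converse half does go through essentially as you sketch: the admissible residues are $r_{c_t}=y_1+s_t=y_1+t$, the failure inequality $r_{c_s}+r_{c_t}\geq x_1+y_1$ forces $s+t\geq x_1-y_1$, and then $c_s+c_t=(s+t)\ell_1+2\geq x_1$, so Proposition~\ref{AG1-rk} is vacuously satisfied. This is the same computation the paper performs directly on the points $\pb_t$.

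The genuine gap is the direction AG1 $\Rightarrow$ congruence, which you yourself flag as the main obstacle but leave at the level of ``find the first index where the residue pattern doubles up.'' This is precisely where the content of the theorem lives, and the cyclic structure of the $s_i$ does not by itself hand you a witness: you must (i) bound $\ell_1-1\leq\ell_i\leq\ell_1$ for all $i$, (ii) observe that $\ell_2=\ell_1$ already gives $\pb_2=2\pb_1$, (iii) in the remaining case derive $s_2=2s_1>s_1$ and then split according to whether some later $\ell_i$ returns to the value $\ell_1$ — if yes, the \emph{first} such index $i$ satisfies $\pb_i=\pb_1+\pb_{i-1}$ (a collision that must be verified coordinatewise, including $c_1+c_{i-1}<x_1$); if no, the $s_i$ form a strictly increasing sequence of $n$ positive integers below $n+1$, forcing $s_1=1$ and contradicting the assumption $s_1\geq 2$. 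Without an argument of this precision the contrapositive is not established, and Example~\ref{Ex}(c) only illustrates one of the two collision patterns. So the proposal identifies the right tools but does not yet constitute a proof of (a), and hence not of (c).
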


\begin{proof}
(a): Let $n=x_1-y_1-1= |H_1^*|$. If $n\in \{0,1\}$, then $H$ is AG1 by Example~\ref{Ex}(b). On the other hand, if $n=0$ then $x_1-y_1=1$ and clearly, $x_1\equiv 1 \mod (x_1-y_1)$.
When $n=1$ we have $x_1-y_1=2$. Since $\gcd(x_1,y_1)=1$ we get  that $x_1$ is odd, hence $x_1\equiv 1 \mod (x_1-y_1)$, too.

We further prove the stated equivalence when $n\geq 2$.
Let  $\ell_1,\dots,\ell_n\geq0$ and  $x_1-y_1> s_1,\dots,s_n\geq0$ such that
	\[
	x_1=\ell_1(x_1-y_1)+s_1 \ , \ y_1+s_{i-1}=\ell_{i}(x_1-y_1)+s_{i},
	\]
	for $i=2,\dots,n$. Then
	\[
	H_1^*=\{\pb_t=(c_t,d_t) : c_t=t+\sum^t_{i=1}\ell_i \ , \ d_t=\sum^t_{i=1}\ell_i \ , \ t=1,\dots,n\},
	\]
	by Lemma~\ref{elements-H1}. 
We note that since $y_1>0$ (see Remark~\ref{ag-zero}) we have $x_1>x_1-y_1$, hence $\ell_1 \geq 1$. 

Assume that   $x_1 \equiv 1 \mod (x_1-y_1)$. Then it is easy to check that $s_i=i$ and $\ell_i=\ell_1-1$ for $i=2,\dots, n$. 
  Consequently,
	\[
	H_1^*=\{(t\ell_1+1,t(\ell_1-1)+1) : t=1,\dots,n\},
	\]
	and therefore,  the sum of any two elements of $H_1^*$ is not in $H_1^*$, i.e. $H$ is AG1.

	Conversely, assume that $H$ is AG1.
As $n>0$ we get that $x_1-y_1 >1$ and $y_1 >0$. In case $y_1=1$, then clearly, $x_1\equiv 1 \mod (x_1-y_1)$.

We consider the case $y_1\geq 2$.
As $1=\gcd(x_1, y_1)=\gcd(x_1, x_1-y_1)=\gcd(s_1, x_1-y_1)$ and $x_1-y_1 >1$ we have that $s_1>0$. We need to prove that $s_1=1$. 

Assume, on the contrary, that $s_1 \neq 1$. Then $s_1\geq 2$. Since 
\begin{eqnarray*}
(\ell_1-1)(x_1-y_1)+s_1=y_1 &\leq & y_1+s_{i-1}= \ell_i(x_1-y_1)+s_i \text{ \quad and} \\
y_1+s_{i-1} &<&  y_1+ (x_1-y_1)=x_1= \ell_1(x_1-y_1)+s_1,
\end{eqnarray*}
we have $\ell_1-1 \leq \ell_i \leq \ell_1$, for $i=2,\dots, n$.

If $\ell_2=\ell_1$, then $\pb_2=(2+2 \ell_1,2\ell_1)=2\pb_1$ which contradicts the AG1 property.
Now, we consider the case $\ell_2=\ell_1-1$. By subtracting the equations
\begin{eqnarray*}
x_1= \ell_1 (x_1-y_1)+s_1  \quad  \text{ and }  \quad y_1+s_1=\ell_2(x_1-y_1)+s_2,
\end{eqnarray*}
we get that $s_2=2s_1$,  hence $s_2>s_1$. 

If $\ell_2=\dots = \ell_n$ then $s_1<s_2<\dots <s_n$ is an increasing sequence of $n$ positive integers less than $n+1$, hence $s_1=1$, which is false.
Thus $\ell_i= \ell_1$ for some $i\geq 3$. Let $i$ be the smallest index with this property, i.e. $\ell_2=\dots= \ell_{i-1}=\ell_1-1$ and $\ell_i= \ell_1$.  Then
	\begin{eqnarray*}
		\pb_i&=&\left(i+(i-2)(\ell_1-1)+2\ell_1 , (i-2)(\ell_1-1)+2\ell_1)\right)\\
		&=&(1+\ell_1, \ell_1)+\left(i-1+(i-2)(\ell_1-1)+ \ell_1,(i-2)(\ell_1-1)+ \ell_1\right)\\
                     &=&\pb_{1}+\pb_{i-1},
	\end{eqnarray*}
	a contradiction.
This shows that when $H$ is AG1, then $x_1\equiv 1 \mod (x_1-y_1)$.

For part (b) we let $H'$ be the semigroup in  $\Hc_2$ with the extremal rays $\ab'_1=(y_2, x_2)$  and $\ab_2'=(y_1, x_1)$.
We remark that  $H$ is AG2 if and only if $H'$ is AG1, and we use (a). Part (c) is a consequence of (a) and (b). 
\end{proof}

\begin{Corollary}
\label{ag11}
Let $H$ be a semigroup in $\Hc_2$ with extremal rays $\ab_i=(x_i, y_i)$ for $i=1,2$. Assume $(1,1) \in \omega_H$ and $x_1x_2y_1y_2 \neq 0$ . Then $K[H]$ is AG if and only if $x_i \equiv 1 \mod (x_i-y_i)$ for $i=1,2$. 
\end{Corollary}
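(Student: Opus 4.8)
The plan is to deduce this corollary by combining two earlier results: Proposition~\ref{1}, which restricts the possible Ulrich elements, and Theorem~\ref{ulrich11}, which decides whether the distinguished candidate $(1,1)$ is actually Ulrich. Since $(1,1)\in\omega_H$ is the standing hypothesis, $(1,1)$ is the bottom element of $H$, and the whole argument reduces to pinning down that no other element can be Ulrich.

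For the implication from the arithmetic conditions to the AG property, I would argue directly. If $x_i\equiv 1 \mod (x_i-y_i)$ for $i=1,2$, then Theorem~\ref{ulrich11}(c) asserts that $(1,1)$ is an Ulrich element in $H$. By Definition~\ref{def:ag}, the existence of an Ulrich element is exactly what it means for $K[H]$ to be AG, so this direction is immediate.

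For the converse, the first step is to verify that the hypotheses of Proposition~\ref{1} are met. The assumption $x_1x_2y_1y_2\neq 0$ forces all four coordinates $x_1,y_1,x_2,y_2$ to be strictly positive, since they are nonnegative integers. Any nonzero $\hb\in H$ can be written as $\hb=\lambda_1\ab_1+\lambda_2\ab_2$ with $\lambda_1,\lambda_2\geq 0$ not both zero; its first coordinate is $\lambda_1 x_1+\lambda_2 x_2>0$ and its second coordinate $\lambda_1 y_1+\lambda_2 y_2>0$, so all nonzero elements of $H$ have positive entries. Together with $(1,1)\in\omega_H$, this places $H$ precisely in the setting of Proposition~\ref{1}. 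Now, if $K[H]$ is AG, then $H$ admits some Ulrich element $\bb$, and Proposition~\ref{1} forces $\bb=(1,1)$. Hence $(1,1)$ is itself an Ulrich element, and Theorem~\ref{ulrich11}(c) then yields $x_i\equiv 1 \mod (x_i-y_i)$ for $i=1,2$, completing the argument.

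I expect no serious obstacle here, as the substance is entirely carried by the two cited results. The only computation to perform by hand is the positivity of the entries of nonzero elements of $H$, which is immediate from $x_1x_2y_1y_2\neq 0$. The conceptual point worth emphasizing is that Proposition~\ref{1} is exactly what guarantees $(1,1)$ is the \emph{only} candidate for an Ulrich element under these hypotheses, so that the concrete arithmetic test of Theorem~\ref{ulrich11} is enough to detect the AG property in full generality (rather than merely certifying $(1,1)$).
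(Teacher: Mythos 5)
Your proof is correct and follows the same route as the paper: Proposition~\ref{1} pins down $(1,1)$ as the only possible Ulrich element (the paper leaves implicit the check, which you spell out, that $x_1x_2y_1y_2\neq 0$ gives positive entries for all nonzero elements of $H$), and Theorem~\ref{ulrich11} then decides the matter.
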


\begin{proof}
By Proposition~\ref{1}, the only possible Ulrich element in $H$ is $(1,1)$. Conclusion follows by  Theorem~\ref{ulrich11}.
\end{proof}

\begin{Remark}
\label{ulrich-not-ag}{\em  In the statement of Corollary~\ref{ag11}, the assumption $x_1x_2y_1y_2\neq 0$ can not be dropped.
For instance, let $H \in \Hc_2$ with the extremal rays $\ab_1=(1,0)$ and $\ab_2=(2,5)$. Its Hilbert basis is
$B_H=\{\ab_1, \ab_2, \cb_1=(1,1), \cb_2=(2,3), \cb_3=(1,2)\}$. The bottom element in $H$ is $\cb_1$, and by Theorem~\ref{ulrich11} it follows that  $H$ is not AG2. 

Still, $H$ is AG. Since $2\cb_1=(2,2)=\cb_3+\ab_1$, $2\cb_2=(4,6)=\ab_1+\ab_2+\cb_1$  and $\cb_1+\cb_2=(3,4)=\ab_1+2\cb_3$, by Theorem~\ref{two} we get that $\cb_3$ is an Ulrich element in $H$.}
\end{Remark}

\section{Nearly Gorenstein semigroup rings}
\label{sec:ng}

In this section we prove the nearly Gorenstein property for semigroup rings $K[H]$ when $H\in \Hc_2$.

Nearly Gorenstein rings approximate Gorenstein rings in a  different way as  almost Gorenstein rings.  In \cite{HHS},  a local (or graded) Cohen--Macaulay ring which admits a canonical module $\omega_R$  is called {\em nearly Gorenstein}  if the  trace  of $\omega_R$ contains the (graded) maximal ideal of $R$. In the case that $R$ is a domain, the canonical module  can be realized as an ideal of $R$ and its trace in $R$, which we denote by  $\tr(\omega_R)$,  is the ideal $\sum_ff\omega_R$,  where the sum is taken over all $f$ in the quotient field of $R$ for which $f\omega_R\subseteq R$, see \cite[Lemma 1.1]{HHS}.

A one-dimensional almost Gorenstein  ring is nearly Gorenstein, but the converse does not hold in general. In higher dimension there is in general no implication valid  between these two concepts, see \cite{HHS}.

\begin{Theorem}
	\label{nearly}
	Let $H$ be a simplicial affine semigroup
	in $\Hc_2$. Then $R=K[H]$ is a nearly Gorenstein ring.
\end{Theorem}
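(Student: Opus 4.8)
The plan is to show that the trace ideal $\tr(\omega_R)$ contains the graded maximal ideal $\mm$ of $R=K[H]$, which by definition means $R$ is nearly Gorenstein. Since $R$ is a normal domain and $\omega_R$ is realized as the monomial ideal generated by $\xb^\cb$ with $\cb \in \omega_H = \ZZ^2 \cap \relint C$, I recall from \cite[Lemma 1.1]{HHS} that $\tr(\omega_R)=\sum_f f\omega_R$ where $f$ ranges over elements of the fraction field with $f\omega_R \subseteq R$. Because everything here is multigraded and $R$ is a domain, it suffices to work monomially: for each generator $\xb^{\cb_0}$ of $\omega_R$ (with $\cb_0 \in G(\omega_H)$), the fraction $f=\xb^{-\cb_0}$ satisfies $f\omega_R = \xb^{-\cb_0}\omega_R$, and this is contained in $R$ precisely when $\cb - \cb_0 \in H$ for all $\cb \in \omega_H$. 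More usefully, the trace is the monomial ideal generated by all exponents of the form $\cb-\cb_0$ where $\cb \in \omega_H$ and $\cb_0 \in \omega_H$ is chosen so that $\cb_0 \preceq_C \cb$ in the cone order (i.e. $\xb^{\cb-\cb_0}\in R$). So the concrete goal becomes: show that every extremal ray generator $\xb^{\ab_1}, \xb^{\ab_2}$ and every Hilbert basis element $\xb^{\cb}$ with $\cb \in B_H$ lies in $\tr(\omega_R)$.

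First I would reduce to checking that the two extremal-ray monomials $\xb^{\ab_1}$ and $\xb^{\ab_2}$ belong to $\tr(\omega_R)$, since $\mm$ is generated by $\xb^{\ab_1},\xb^{\ab_2}$ together with the $\xb^{\cb}$ for $\cb \in B_H \cap \omega_H$, and the latter already lie in $\omega_R \subseteq \tr(\omega_R)$ (note $\tr(\omega_R)$ always contains $\omega_R$ whenever $\omega_R$ is a nonzero ideal, as $f=1$ is admissible). By the symmetry of the two rays it is enough to treat $\ab_1$. The strategy is to exhibit an element $\cb^\star \in \omega_H$ together with a second element $\cb^\star + \ab_1 \in \omega_H$ so that, taking $f=\xb^{-\cb^\star}$, we get $f \cdot \xb^{\cb^\star+\ab_1}=\xb^{\ab_1}$, provided $f\omega_R \subseteq R$, i.e. provided $\cb^\star$ is componentwise-in-the-cone below every element of $\omega_H$. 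The natural candidate for $\cb^\star$ is the bottom element $\bb$ of $H$, whose existence is guaranteed by Lemma~\ref{exists}: it satisfies $\cb - \bb \in \NN^2 \subseteq H$ for all $\cb \in \omega_H$, so $f=\xb^{-\bb}$ indeed gives $f\omega_R \subseteq R$. Then $\xb^{-\bb}\omega_R \subseteq \tr(\omega_R)$, and this ideal contains $\xb^{\cb-\bb}$ for every $\cb \in \omega_H$.

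The crux is therefore to produce, for each $i=1,2$, an element $\cb_i \in \omega_H$ with $\cb_i - \bb = \ab_i$, equivalently $\bb + \ab_i \in \omega_H$; this is immediate since $\bb \in \relint C$ forces $\bb+\ab_i \in \relint C$, so $\bb+\ab_i \in \omega_H$. Hence $\xb^{\ab_i}=\xb^{-\bb}\cdot \xb^{\bb+\ab_i} \in \xb^{-\bb}\omega_R \subseteq \tr(\omega_R)$. Combined with $\xb^{\cb}\in \omega_R \subseteq \tr(\omega_R)$ for the remaining Hilbert basis generators, every minimal generator of $\mm$ lies in $\tr(\omega_R)$, so $\mm \subseteq \tr(\omega_R)$ and $R$ is nearly Gorenstein.

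The main obstacle I anticipate is the bookkeeping in identifying $\tr(\omega_R)$ concretely as a monomial ideal and justifying that it suffices to test admissible fractions $f$ of monomial type. The clean way around this is the general principle that for a normal (hence $\NN^2$-graded, domain) semigroup ring the trace of a monomial ideal is again a monomial ideal computed from the monomial admissible fractions, so one never needs non-monomial $f$. Once that reduction is in place, the argument is essentially the observation that the bottom element $\bb$ exists (Lemma~\ref{exists}) and that translating $\bb$ by either extremal ray stays in the interior of the cone, which recovers the generators of $\mm$ up to the factor $\xb^{\bb}$. A secondary point to verify carefully is the boundary behavior when an extremal ray lies on a coordinate axis (e.g. $x_2=0$), but the interior argument $\bb+\ab_i \in \relint C$ is insensitive to this, so no special case should arise.
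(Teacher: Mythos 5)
Your overall framework is right: it does suffice to place $\xb^{\ab_1}$ and $\xb^{\ab_2}$ in $\tr(\omega_R)$, since the remaining generators of $\mm$ already lie in $\omega_R\subseteq\tr(\omega_R)$, and it does suffice to test monomial fractions $f=\xb^{-\cb^\star}$ with $\cb^\star\in\omega_H$. But the key step fails: you assert that the bottom element $\bb$ satisfies $\cb-\bb\in\NN^2\subseteq H$ for all $\cb\in\omega_H$, and the inclusion $\NN^2\subseteq H$ is false except when $H=\NN^2$. The bottom element is minimal for the \emph{componentwise} order on $\NN^2$, whereas admissibility of $f=\xb^{-\bb}$ requires minimality for the order induced by $H$ itself, i.e.\ $\cb-\bb\in C$ for all $\cb\in\omega_H$. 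These are different: in Example~\ref{ex:notnice}, with $\ab_1=(5,2)$, $\ab_2=(2,5)$ and bottom element $\bb=(1,1)$, the element $\cb=(2,1)\in\omega_H$ gives $\cb-\bb=(1,0)$, which lies in $\NN^2$ but not in the cone $C$, so $\xb^{-\bb}\xb^{\cb}\notin R$ and $\xb^{-\bb}\omega_R\not\subseteq R$. In fact your hypothesis can never hold in the interesting case: if $\cb\in G(\omega_H)$ and $\cb\neq\bb$, then $\cb-\bb\notin H$ by minimality of $\cb$ as a generator of the semigroup ideal $\omega_H$, so $\xb^{-\bb}$ is admissible only when $G(\omega_H)=\{\bb\}$, i.e.\ when $R$ is already Gorenstein.

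The paper's proof circumvents exactly this obstruction by not using a single universal denominator. For the ray $\ab_1$ it picks $\cb_k\in G(\omega_H)$ minimizing $\langle\nb_1,\cdot\rangle$ (the generator ``closest'' to the facet spanned by $\ab_1$) and takes $f=\xb^{\ab_1-\cb_k}$; the inequality $\langle\nb_1,\cb_j\rangle\geq\langle\nb_1,\cb_k\rangle$ handles one facet, and the fact that $\cb_k\in P_H$ gives $\langle\nb_2,\ab_1\rangle>\langle\nb_2,\cb_k\rangle$ for the other. A symmetric choice works for $\ab_2$. If you want to salvage your argument, you would need to replace the bottom element by such ray-dependent choices and verify the two cone inequalities directly; as written, the proof only covers the Gorenstein case.
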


\begin{proof}
	Let $\ab_1=(c,d)$ and $\ab_2=(e,f)$ be the extremal rays of $H$. We may assume that $d/c <f/e$ and 
that $R$ is not already a Gorenstein ring.
	
	The vector $\nb_1=(-d,c)$ is orthogonal to $\ab_1$ and $\nb_2=(f,-e)$ is orthogonal to $\ab_2$. Moreover,  $\cb$ is in $C$, the cone over $H$,   if and only if $\langle \nb_1,\cb\rangle \geq 0$ and  $\langle \nb_2,\cb\rangle \geq0$.
	
	Let $\cb_1,\ldots, \cb_t, \cb_{t+1}, \cb_{t+2}$ be the Hilbert basis of $H$, where $\cb_{t+i}=\ab_i$ for $i=1,2$.  Then $\omega_R$ is generated by $v_i=\xb^{\cb_i}$ for $i=1,\ldots, t$, see Lemma~\ref{lemma:omega}.
	
	In order to prove that $R$ is nearly Gorenstein, it suffices to show that for each element $\cb_i$  of the Hilbert basis there exist $\cb\in \ZZ^2$ and an integer $k\in \{1,\ldots,t\}$ such that
	\begin{enumerate}
		\item[(i)] $\cb+\cb_j\in C$ for $j=1,\ldots,t$, and
		\item[(ii)] $\cb+\cb_k=\cb_i$.
	\end{enumerate}
	If $i\in\{1,\ldots,t\}$, we may choose $\cb=0$ and $k=i$. It suffices to consider the cases $i=t+1$  and $i=t+2$. By symmetry we may assume that $i=t+1$, and have  to find $\cb\in\ZZ^2$ and  $k\in\{1,\ldots,t\}$ such that (i) is satisfied and such that $\cb+\cb_k=\ab_1$.
	
	Let $k\in\{1,\ldots,t\}$ be   chosen such that 
$\langle \nb_1,\cb_k\rangle =\min \{\langle \nb_1,\cb_j\rangle: j=1,\ldots,t \}  $.
	 Set  $\cb=\ab_1-\cb_k$.  Then $\cb+\cb_k=\ab_1$.
	Moreover, by the choice of $k$ for $j=1,\ldots,t$  we have
	\begin{eqnarray*}
		\langle \nb_1,\cb+\cb_j\rangle= \langle \nb_1,\ab_1\rangle -\langle \nb_1,\cb_k\rangle +\langle \nb_1,\cb_j\rangle=
		0-\langle \nb_1,\cb_k\rangle +\langle \nb_1,\cb_j\rangle\geq 0,
	\end{eqnarray*}
	and
	\[
	\langle \nb_2,\cb+\cb_j\rangle = \langle \nb_2,\ab_1\rangle -\langle \nb_2,\cb_k\rangle +\langle \nb_2,\cb_j\rangle.
	\]
	Since $\cb_j\in H$, we have $\langle \nb_2,\cb_j\rangle\geq 0$. Let $L$ be the line  passing through $\cb_k$  which is  parallel to $L_2=\RR \ab_2$, and  $L'$ be the line   passing  through $\ab_1$ parallel to $L_2$. Since $\cb_k\in P_H$, the line $L$ has smaller distance to $L_2$ than the line $L'$. This implies that $\langle \nb_2,\ab_1\rangle >\langle \nb_2,\cb_k\rangle$, hence $\langle \nb_2,\cb+\cb_j\rangle>0$. Thus we conclude that $\cb+\cb_j\in C$, as desired.
\end{proof}

Theorem~\ref{nearly} is no longer valid when $\dim K[H] >2$,  as the following example shows.

\begin{Example}\label{example2}
	{\em
	We consider again the semigroup $H\in\Hc_3$ from Remark~\ref{rem:nobottom}.
  It turns out that $K[H]$ is not nearly Gorenstein for this semigroup $H$. One can see that $\ab_1$ does not satisfy the two conditions (i) and (ii) in the proof of Theorem~\ref{nearly}.  

In fact, if we consider the set  $A$ of all  $\ab_1-\cb_i$ for $i=1,\dots,13$, then the third component of elements in $A$, belongs to $\{0,-1,-2,-3,-4\}$. Adding the elements with negative third component to $(1,2,1)$, we get a vector with third component less than $1$, which does   not belong to  $C$, the cone over $H$. 
Adding those elements in $A$ with zero third component to either $(2,1,1)$ or $(1,2,1)$, we again get a vector which does not belong to  $C$.
}
\end{Example}

\medskip
{\bf Acknowledgement}.
We gratefully acknowledge the use of the  Singular (\cite{Sing}) and Normaliz (\cite{Normaliz}) softwares for our computations.
Raheleh Jafari was in part supported by a grant from IPM (No. 98130112).
 Dumitru I. Stamate  was partly supported by the University of Bucharest, Faculty of Mathematics and
Computer Science through the 2018 and 2019 Mobility Fund.
\medskip

\medskip


\end{document}